\newtheorem{Thm}{Theorem}
\newtheorem{Prop}{Proposition}[section]
\newtheorem{Lem}[Prop]{Lemma}
\newtheorem{Cor}{Corollary}[Prop]
\newtheorem{Corth}[Prop]{Corollary}
\theoremstyle{definition}
\newtheorem{ex}[Prop]{Example}
\theoremstyle{remark}
\newtheorem{rem}[Prop]{Remark}
\newcommand\bmattrix[4]{\left(\begin{array}{cc}#1&#2\\#3&#4\end{array}\right)}
\newcommand\sbmattrix[4]{\textnormal{\scriptsize$\left(\begin{array}{cc}#1&#2\\#3&#4\end{array}\right)$\normalsize}}
\newcommand\sbvecttor[2]{\textnormal{\scriptsize$\left(\begin{array}{cc}#1\\#2\end{array}\right)$\normalsize}}
\title{Two dimensional integral representations via branches of the Bruhat-Tits tree}
\author{
  Aguil\'o-Vidal, Bruno\\
  \texttt{bruno.aguilo@udp.cl}
  \and
    Arenas-Carmona, Luis\\
  \texttt{learenas@u.uchile.cl}
  \and
  Saavedra-Lagos, Mat\'ias\\
  \texttt{matias.saavedra.l@ug.uchile.cl}
}
\begin{document}
\maketitle

\begin{abstract}
We apply the theory of branches in Bruhat-Tits trees, developed in previous 
works by the second author and others, to the study of two 
dimensional representations of finite groups over the ring of integers
of a number field. We provide a general strategy to perform these 
computations, and we give explicit formulas for some particular families.
\end{abstract}

MSC-class: 11R33, 20C10 (Primary)  11R56 (Secondary).

Keywords: Integral representations, Bruhat-Tits trees, Maximal orders.
\section{Introduction}

Representation theory is a branch of mathematics that explores how
algebraic structures can be represented as linear transformations of vector
spaces. It has deep connections to various fields, 
including physics, geometry, and number theory.
Since finite group of linear transformations appear naturally in a 
variety of contexts, having a classification of the essentially different
ways a finite group can appear as a group of linear transformation
is an important problem. One that has a natural solution through
the study of group algebras, which, at least over fields of characteristic 
zero, has an extremely simple structure. This allows us to prove easily,
for instance, the finiteness of the number of irreducible representations,
and often compute their number and dimensions. 

Integral representations play an equally significant role in any context 
where discrete structures are considered, like in studying the structure of 
crystals. 
As an example, it is relatively easy to prove that every lattice $\Lambda$
with a symmetry $\sigma$ of order $2$ can be written as the direct sum 
of three lattices, $\Lambda_1$, $\Lambda_{-1}$ and $\Lambda_i$,
where $\sigma$ acts as the identity on $\Lambda_1$, as multiplication by 
$-1$ on $\Lambda_{-1}$, while $\Lambda_i$ is a sum of rank-$2$ lattices
with $\sigma$ acting as the matrix $\sbmattrix 0110$ on each. This gives
a complete classification of the $\mathbb{Z}$-representations of
the group $C_2$ with two elements.

More generally, we are interested in classifying maps 
$\phi:G\rightarrow\mathrm{GL}_n(\mathcal{O}_K)$,
where $\mathcal{O}_K$ is the ring of integers of a number
field $K$, up to a change of basis, 
i.e., up to conjugacy by an invertible matrix in 
$\mathrm{GL}_n(\mathcal{O}_K)$. One possible approach is to imitate
the already successful theory for fields, i.e., study all possible 
modules over the group ring $\mathcal{O}_K[G]$. An example of this 
approach can be found in \cite{Le64}. See also \cite{HR62a},
\cite{HR62b}, \cite{Jo62} and \cite{Ka98}. This is way harder than
the corresponding problem for fields. For instance, when
$R=\mathbb{Z}[C_2]$, the finitely generated
torsion free $R$-modules have the structure
that has already been described above, but only lattices of the form
$\Lambda=\Lambda_i$ are actually projective modules, as there are 
natural exact sequences of the form
$$0\rightarrow \Lambda_\epsilon\rightarrow \Lambda_i
\rightarrow\Lambda_{-\epsilon}\rightarrow0,$$
for $\epsilon\in\{\pm1\}$ that fail to split. This tell us that a
complete understanding of the representation theory for these rings
is significantly more involved, and in fact the theory
of $\mathcal{O}_K[G]$-modules encodes
abundant arithmetic information, as should be apparent in the
sequel.

Since the representation theory over fields is 
already well understood,
there is another path to the study of integral representations.
To classify them first under a number field $K$, and then study
the set of integral representation that are conjugate 
to a given
$K$-representation. This set is potentially empty, see
\cite{cli92} or \cite{Se08} for an example, 
or \cite{Hoff16} for a more
algorithmic approach. This is our path in this work. To study
integral representation in this context, 
we use the correspondence
between maximal orders containing a given representation
and integral representations in a given maximal 
order, as described in 
\S4 below. Then we study the set of maximal 
orders containing one given 
representation or, equivalently, 
containing the  order  generated by its image.
The cases where this order is maximal 
have been studied in \cite{Man18}.
This is not often the case. However, 
tools for the study of the set 
of maximal orders containing a given suborder, in a more 
general setting, have already been developed in 
some of our previous
works like \cite{A13},\cite{AS16},\cite{AAC18} or \cite{AB19}. 
They focus mostly on quaternion algebras, like the
four dimensional matrix algebra $\mathbb{M}_2(K)$,
and that is the reason we limit our
study to two-dimensional representation here. However, 
some of our techniques
can be extended to higher dimensions, and we expect to do so in
future works. Here we focus in adapting the techniques previously
developed to study embeddings of quadratic rings into orders
in quaternion algebras to the precise objective
at hand. This is often simpler when studying representations
of finite groups over their field of definition $K_0$, since 
this theory rely mostly on the arithmetic of cyclotomic fields. 
Our main results take a much simpler form over $K_0$ than over
an arbitrary extension. However, the language of Bruhat-Tits
trees is entirely appropriate to state the more general Theorems.
Due to their technicality, however,
we require some preparation before 
stating them. We delay this until next section.

\subparagraph{Basic notations and facts.}

In all of this work, we assume that $K/\mathbb{Q}$ is a 
finite extension, and $\mathcal{O}_K\subseteq K$ denote
the ring of integers in $K$, which is a Dedekind domain. 
In particular, the non-zero 
fractional $\mathcal{O}_K$-ideals in $K$
form a group under multiplication that we denote by
$\mathcal{I}_K$. The subset of principal 
ideals $a\mathcal{O}_K$ is
a subgroup $\mathcal{P}_K$, and the quotient 
$\mathcal{G}_K=\mathcal{I}_K/\mathcal{P}_K$ is 
called the class group
of $K$. For any finite extension $L/K$ 
there is a multiplicative 
norm function
$N_{L/K}:\mathcal{I}_L\rightarrow\mathcal{I}_K$, 
and the relative class
group is defined by
$$\mathcal{G}_{L/K}=N_{L/K}^{-1}(\mathcal{P}_K)/\mathcal{P}_L
\subseteq\mathcal{G}_L.$$
The order of the group $\mathcal{G}_K$ is called the 
class number of $K$
and denoted $h_K$. Analogously, the order 
of the group $\mathcal{G}_{L/K}$
is called the relative class number of the 
extension $L/K$ and denoted 
$h_{L/K}$. We also use the group 
$\mathcal{G}_K(n)$, which is defined
as the set of classes $\bar{I}\in\mathcal{G}_K$ that satisfy
$\bar{I}^n=e_{\mathcal{G}_K}=\overline{(1)}$.
Equivalently, $\bar{I}\in\mathcal{G}_K(n)$ if and only if
$I^n$ is a principal fractional ideal. We also use 
$h_K(n)$ for the order of  $\mathcal{G}_K(n)$.

By a finite place of a field $K$ we mean an equivalence class
of discrete valuations, which is often denoted by
one representative $\nu$. The completions of $K$
and $\mathcal{O}$ are denoted by $K_\nu$
and $\mathcal{O}_\nu$. When $K$ is a number field,
these places are in correspondence with the maximal
ideals in $\mathcal{O}_K$. The maximal ideal corresponding 
to a valuation $\nu$ is denoted $P[\nu]$. Conversely,
we use $\nu_P$ for the valuation corresponding to a maximal 
ideal $P$. The notation for maximal ideals intend to
make easy to remember the rational prime it contains.
For instance, $\mathbf{2}_1$ is a maximal ideal containing
$2$, and $\nu_{\mathbf{2}_1}$ is a dyadic place. 

The greek letters $\rho$, $\phi$, $\varphi$, $\psi$ and 
variations thereof denote representations of a 
finite group $G$ and take values in the group 
$\mathrm{GL}_2(K)$ of invertible two by two 
matrices with coefficients in $K$. They are called integral
if the image is contained in $\mathrm{GL}_2(\mathcal{O}_K)$.
By convention, we identify any such representation $\phi$ 
with the composition $\mathbf{i}\circ\phi$, where
$\mathbf{i}:\mathrm{GL}_2(K)\hookrightarrow\mathrm{GL}_2(L)$
is the natural inclusion,
for any field extension $L/K$, whenever confusion seems 
unlikely. In particular, this is applied to completions.

Maximal orders in the matrix algebra $\mathbb{M}_2(K)$ 
are denoted by variations of the symbol
$\mathfrak{D}$. Usually $\mathfrak{H}$ denote a non-maximal
order, often an order in a two dimensional subalgebra.
We use the term Artin distance to refer to the distance function defined in \cite[\S1]{A13}. For every pair 
$(\mathfrak{D},\mathfrak{D}')$ of maximal orders,
the Artin distance $\mathrm{d}(\mathfrak{D},\mathfrak{D}')$ 
is an element in the Galois group of a suitable extension $\Sigma/K$ called the spinor class field. It is the identity
if and only if $\mathfrak{D}$ is isomorphic to $\mathfrak{D}'$.

\subparagraph{Trees and branches.}

By the Bruhat-Tits tree of $K$ at a local place $\nu$ we 
mean the
tree $\mathfrak{t}_\nu$ whose vertices are in 
correspondence with
the closed balls in the completion $K_\nu$ with two balls 
being neighbors if one is a maximal sub-ball of the other. The 
linear group  $\mathrm{GL}_2(K_\nu)$ acts on this 
tree via Moebius
Transformations. Alternative definitions of the tree,
as much as several ways to explicitly described the action
are recalled in \S\ref{S5B}. For any matrix $\mathtt{T}\in
\mathrm{GL}_2(K_\nu)$ there is a (possibly empty) maximal
invariant subtree 
$\mathfrak{s}_\nu(\mathtt{T})\subseteq\mathfrak{t}_\nu$.
The vertices of this subtree are precisely the 
invariant vertices.
The definition of the invariant subtree 
$\mathfrak{s}_\nu(\mathtt{T}_1,\dots,\mathtt{T}_n)=
\bigcap_{i=1}^n\mathfrak{s}_\nu(\mathtt{T}_i)$ is analogous. 
The subindex $\nu$ is omitted if it is clear from the context. 
These invariant subtrees have a very simple structure 
when the matrices $\mathtt{T}_i$ have different eigenvalues. 
They contain
precisely the vertices at a fixed distance $d$ or 
less that a central 
path called the stem. The connected 
components of the graph obtained 
by removing all edges in the stem are called side branches. All side branches look equal.
See Fig. \ref{fn1}.\textbf{A}-\textbf{B} for an example.
For instance, the diagonal matrix $\sbmattrix100h$
has a non-empty invariant subtree if and only if the valuation 
$\nu(h)$ is zero, and in that case we have $d=\nu(h-1)$.
Furthermore, the stem, for that matrix, is the path containing
precisely the vertices corresponding to balls centered at $0$.
See Lemma \ref{l61} for details.

Global matrices (or sets of them) have an invariant subtree
at every local place. However, these local side branches 
are trivial, i.e.,
consist of a single vertex, at all but finitely many places.
For a single matrix, the stem can be a vertex, an edge,
or a maximal path, i.e., an infinite path that is infinite
in both directions. Figure \ref{fn1}.\textbf{A} shows an
example of the latter. See \S\ref{S5B} for details.

\begin{figure}
\[
\unitlength 1mm 
\linethickness{0.4pt}
\ifx\plotpoint\undefined\newsavebox{\plotpoint}\fi 
\begin{picture}(85,27)(15,0)
\put(32,5){\textbf{A}}
\put(20.75,12){\line(1,0){28.75}}
\put(16,11.2){$\cdots$}
\put(50,11.2){$\cdots$}
\put(23.75,12){\line(0,1){7.25}}
\multiput(23.75,19)(-.033653846,.069711538){80}
{\line(0,1){.069711538}}
\multiput(23.75,19)(.033653846,.069711538){80}
{\line(0,1){.069711538}}
\put(30.75,12){\line(0,1){7.25}}
\multiput(30.75,19)(-.033653846,.069711538){80}
{\line(0,1){.069711538}}
\multiput(30.75,19)(.033653846,.069711538){80}
{\line(0,1){.069711538}}
\put(37.75,12){\line(0,1){7.25}}
\multiput(37.75,19)(-.033653846,.069711538){80}
{\line(0,1){.069711538}}
\multiput(37.75,19)(.033653846,.069711538){80}
{\line(0,1){.069711538}}
\put(44.75,12){\line(0,1){7.25}}
\multiput(44.75,19)(-.033653846,.069711538){80}
{\line(0,1){.069711538}}
\multiput(44.75,19)(.033653846,.069711538){80}
{\line(0,1){.069711538}}
\put(64.75,12){\line(0,1){7.25}}
\multiput(64.75,19)(-.033653846,.069711538){80}
{\line(0,1){.069711538}}
\multiput(64.75,19)(.033653846,.069711538){80}
{\line(0,1){.069711538}}
\put(74.75,12){\line(0,1){7.25}}
\put(69,5){\textbf{B}}
\put(95,12){\line(0,1){7.25}}
\multiput(95,19)(-.033653846,.069711538){80}
{\line(0,1){.069711538}}
\multiput(95,12)(-.051339286,.033482143){56}
{\line(-1,0){.051339286}}
\multiput(95,19)(-.051339286,.033482143){56}
{\line(-1,0){.051339286}}
\multiput(95,19)(.033653846,.069711538){80}
{\line(0,1){.069711538}}
\put(92,13.8){\line(0,1){7.25}}
\multiput(92,20.8)(-.033653846,.069711538){80}
{\line(0,1){.069711538}}
\multiput(92.2,24.7)(-.051339286,.033482143){56}
{\line(-1,0){.051339286}}
\multiput(97.9,24.7)(-.051339286,.033482143){56}
{\line(-1,0){.051339286}}
\multiput(93.4,23)(.033653846,.069711538){50}
{\line(0,1){.069711538}}
\put(93,5){\textbf{C}}
\put(94,11.5){$\bullet$}
\put(64,11.5){$\bullet$}
\put(73.8,11.5){$\bullet$}
\end{picture}
\]
\caption{A typical branch (\textbf{A}). A side branches
of the latter subtree and a smaller side branch (\textbf{B}).
The product of the two previous side branches (\textbf{C}).}
\label{fn1}
\end{figure}

We make intensive use of products of Bruhat-Tits trees,
or some of their subtrees, at different places.
For instance, the product $\mathbb{B}$ of the side
branches plays an important role in our main results.
These products are simpler to define if graphs
are considered as topological spaces with distinguished points
called vertices and distinguished subsets called edges.
See Fig. \ref{fn1}.\textbf{C} for an example. 
By vertices of the product, we refer
to those elements having a vertex in every coordinate.
The example in Fig. \ref{fn1}.\textbf{C} has 8 vertices.
Most of the products that we actually consider are finite,
in the sense that all but a finite number of factors
consist of a single vertex, as it is the case for the product
of side branches. The product $\mathbb{B}$ of the side
branches has a unique point $v=(v_\nu)_\nu$ whose every 
coordinate $v_\nu$ is a stem vertex.
This is called the anchor vertex.
Stem vertices in Fig. \ref{fn1}.\textbf{B} and the anchor 
vertex in Fig. \ref{fn1}.\textbf{C} are marked by
bullets ``$\bullet$''.

For any two vertices $v=(v_\nu)_\nu$ and
$w=(w_\nu)_\nu\in\mathbb{B}$, the information 
of the local graph distance $d_\nu$ between the coordinates
$w_\nu$ and $v_\nu$ can be collected in an ideal 
$I_{(v,w)}=\prod_\nu P[\nu]^{d_\nu}$,
called the distance ideal between the two vertices.
Its valuation at $\nu$ is, by definition, $d_\nu$.

\section{Main results}\label{MR}

In what follows, we fix a finite group $G$, and a 
representation $\phi_0:G\rightarrow
\mathrm{GL}_2(\mathcal{O}_K)$. Let $\tilde{\Phi}$ be the set 
of all representations $\phi:G\rightarrow
\mathrm{GL}_2(\mathcal{O}_K)$ that are 
$\mathrm{GL}_2(K)$-conjugate to $\phi_0$. Let $\Phi$ be the set
of $\mathrm{GL}_2(\mathcal{O}_K)$-conjugacy classes in
$\tilde{\Phi}$. Let $\tilde{\Omega}$ be the set of
maximal orders $\mathfrak{D}\subseteq\mathbb{M}_2(K)$ 
containing $\phi_0(G)$ that are isomorphic to
$\mathbb{M}_2(\mathcal{O}_K)$. Let
$\mathfrak{L}=\mathfrak{L}_{\phi_0}$ be the centralizer of the
representation $\phi_0$. Finally, let $\Omega$ be the set 
 of $\mathfrak{L}^*$-conjugacy classes of orders 
in $\tilde{\Omega}$. With these notations the general tool 
that we develop in this work to describe integral 
representations is the following:

\begin{Thm}\label{percc}
    In the preceding notations, assume 
    $\phi_0$ is not trivial.
    There exists a natural surjective map 
    $f:\Phi\rightarrow\Omega$ for which
    the pre-image $f^{-1}[\mathfrak{D}]$
    of the class $[\mathfrak{D}]\in\Omega$ 
    of an order
    $\mathfrak{D}\in\tilde{\Omega}$ 
    is in correspondence 
    with the quotient group 
    \begin{equation}\label{defna}
        \tilde{N}_\mathtt{a}=\frac{N}{
        \mathrm{GL}_2(\mathcal{O}_K)(N\cap 
        \mathtt{a}\mathfrak{L}^*\mathtt{a}^{-1})},
    \end{equation}
    where $N\subseteq\mathrm{GL}_2(K)$ is the normalizer of $\mathbb{M}_2(\mathcal{O}_K)$, and 
    $\mathtt{a}\in\mathrm{GL}_2(K)$ is any matrix satisfying
    $\mathfrak{D}=\mathtt{a}^{-1}\mathfrak{D}_0 \mathtt{a}$.
    In particular, the order of this group is independent on 
    the choice of $\mathtt{a}$.
\end{Thm}

Our general strategy is to describe the sets $\tilde{\Omega}$
and $\Omega$ for different families of representations in terms
of subgraphs of Bruhat-Tits trees, on which the action of
the group $\mathfrak{L}^*$ is often simple to describe.
Maximal orders are described in terms of vertices. We need
to tell apart the vertices that do correspond to maximal orders
isomorphic to $\mathbb{M}_2(\mathcal{O}_K)$, and this
is achieved thanks to the existence of the Artin distance, 
which classifies maximal orders into isomorphism 
classes and it is easy to read from the graph. Finally,
we need to compute the quotients $\tilde{N}_\mathtt{a}$
corresponding to each vertex. Since the quotient 
$N/\mathrm{GL}_2(\mathcal{O}_K)$ is well understood in terms
of the ideal group, the final step of the computation
reduces to the study of certain ideal classes related
to the group $N\cap\mathtt{a}\mathfrak{L}^*\mathtt{a}^{-1}$.
The detail of this last step vary according to the 
representation family at hand. In particular, it depends
on the nature of the centralizer $\mathfrak{L}$, which is
trivial for absolutely irreducible representations, but not
for representations of abelian groups. The same can be said
about the $\mathfrak{L}^*$-action on vertices. For abelian
groups, the $\mathfrak{L}^*$-action on stem vertices is easy 
to describe in terms of ideal class groups, so we are
left with the action on side branches, which can be 
described in terms of congruence relations on groups of units.
See \S\ref{S6} for details.

Our first pecific result concerns representations of the form $\rho=\sbmattrix{\chi_1}00{\chi}$, where $\chi_1$ and $\chi$
are one dimensional representations. Dividing by
$\chi_1$, whose values are roots of unity and therefore
unit in $\mathcal{O}_K$, we can assume that $\chi_1=1$,
so $\rho=\rho_\chi$, where $\rho_\chi=\sbmattrix100{\chi}$.
Since any finite group of root of unity is cyclic, the image
of $\rho$ is generated by a matrix $\sbmattrix100{\eta}$,
where $\eta$ is a root of unity.

 \begin{Thm}\label{t5}
    Assume $K\supseteq\mathbb{Q}[\eta]$.
    Let $\mathbb{B}$ be the product of the local
    side branches corresponding to the matrix 
    $\mathtt{D}=\sbmattrix100\eta$. Assume that
    $\mathcal{O}_K^*$ act on $\mathbb{B}$ 
    by multiplication with $\mathbf{O}$ orbits of vertices.
    Then the number of 
    $\mathrm{GL}_2(\mathcal{O}_K)$-conjugacy 
    classes of $\mathcal{O}_K$-representations of $G$ that are
    conjugate to $\rho_\chi$ over $K$ is $\mathbf{O}h_K$. 
\end{Thm}

\begin{ex}
    If the order of the group $\chi(G)\subseteq K^*$
    is not a prime power, then $\mathbf{O}=1$, see 
    Prop. \ref{p63}.
\end{ex}

\begin{ex}
    Assume that $K=K_0$ is the field of definition of $\chi$,
    and the order of the group $\chi(G)$
    is a prime power. Then $\mathbf{O}=2$, see 
    Prop. \ref{p64}. For instance, if $\chi(G)$ has order $2$, 
so that $K_0=\mathbb{Q}$, there are precisely two
conjugacy classes of representations, representatives of which
take the generator to the matrices $\sbmattrix 0110$ and 
$\sbmattrix 100{-1}$, respectively. 
\end{ex}

If $K$ strictly contains $K_0$, the coefficient 
$\mathbf{O}$ can be arbitrarily large in the prime power 
case. Example \ref{e67} in \S\ref{S6} shows a case where
$\mathbf{O}=4$.

Next we consider a representation $\varphi$ that is 
indecomposable over $K$ but becomes decomposable over some 
extension $L/K$, which can be assumed to be Galois. 
Over $L$, this representation has the form 
$\varphi=\sbmattrix{\chi_1}00{\chi_2}$, where $\chi_1$
and $\chi_2$ are one dimensional representation and 
the set $\{\chi_1,\chi_2\}$ is necessarily a Galois orbit.  
Over $K$ the image is contained in a two dimensional 
subalgebra $\mathfrak{L}$ of $\mathbb{M}_2(K)$ which
is necessarily a field. In particular, the image of $\varphi$
is generated by a matrix $\mathtt{r}_0$ whose eigenvalues are 
contained in a quadratic extension of $K$ which can be assumed 
to be $L$. Under this assumption, the algebra 
$\mathfrak{L}$ is isomorphic to the field extension
$L$, and we fix one such isomorphism in all that follows.
If $\mathfrak{H}_\varphi$ is the ring of integers
in $\mathfrak{L}$, i.e., the ring of matrices in
$\mathfrak{L}$ that are integral over $\mathcal{O}_K$,
it can be proved that the smallest order
$\mathcal{O}_K[\mathtt{r}_0]$ containing 
$\mathtt{r}_0$ has the form $\mathcal{O}_K[\mathtt{r}_0]=
\mathcal{O}_K\mathtt{1}+I_\varphi\mathfrak{H}_\varphi$,
where $\mathtt{1}$ is the identity matrix and
$I_\varphi\subseteq\mathcal{O}_K$ is an ideal that is
non-trivial only at places where $\mathtt{r}_0$ has
non-trivial side branches. Furthermore, the distance
ideal $I_{(v,w)}$ between the anchor vertex $v$
and any other vertex in the product $\mathbb{B}$
is a divisor of $I_\varphi$. See \S\ref{S9B}.
Recall that $L^*\cong\mathfrak{L}^*$ acts on the 
Bruhat-Tits tree via Moebius transformations. 

 \begin{Thm}\label{t6}
    Assume that the field $K$ does not contain
    the eigenvalues $\xi_1$ and $\xi_2$ of $\mathtt{r}_0$.
    The group $$U=\{\lambda\in 
    L^*|\lambda\mathcal{O}_L=J\mathcal{O}_L
    \textnormal{ for some }J\in\mathcal{I}_K(2)\}
    \subseteq L^*.$$  acts on $\mathbb{B}$ 
    preserving both the anchor vertex $v$ and the distance 
    ideal $I_{(v,w)}$ for any vertex $w\in\mathbb{B}$.  
    We let $\mathbf{O}_{I'}$ denote the 
    number of orbits of vertices $w$ satisfying
    $I_{(v,w)}=I'$. Furthermore, we let  $$U_{I'}=
    \left\{\lambda\in U\Big| \frac{\sigma(\lambda)}
    {\lambda}\equiv 1\mod I'I[L/K]\right\},$$
    where $I[L/K]$ is the different of the quadratic extension.
    Then the correspondence $I'\mapsto U_{I'}$ preserves
    inclusions and satisfies $U_{(1)}=U$. Furthermore,
    assume that $L$ is not contained in the Hilbert class field
    of $K$. Then the number of 
    $\mathrm{GL}_2(\mathcal{O}_K)$-conjugacy classes of 
    $\mathcal{O}_K$-representations of $G$ that are 
    $\mathrm{GL}_2(K)$-conjugate to $\varphi$ is 
    $$h_{L/K}\sum_{I'|I_\rho}\mathbf{O}_{I'}\frac{\mu_{(1)}}
    {\mu_{I'}},$$
    where $\mu_{I'}$ is the order of the image
    of $U_{I'}$ in $\mathcal{G}_K(2)$.
\end{Thm}

The condition that $L$ is not contained in the Hilbert class 
field of $K$ is there to ensure that the maximal order 
$\mathfrak{H}_\varphi$ of $K[\mathtt{r}_0]$ is not selective 
(c.f. Lemma \ref{lsel}). We refer to it as the selectivity condition.

\begin{ex}
      Let $\eta$ be a primitive $n$-th root of unity, 
      $L=\mathbb{Q}[\eta]$, and let
      $\chi$ be an invertible character sending the generator
      of a cyclic group $C$ to $\eta$. 
      Let $K=L^{\sigma}\subseteq L$,
      be the invariant field of an element 
      $\sigma$ of order two in the
      Galois group $\mathrm{Gal}(L/\mathbb{Q})
      \cong(\mathbb{Z}/n\mathbb{Z})^*$, and let $\varphi$ 
      be the representation with character $\chi+\sigma(\chi)$,
      which is defined over $K$. In this case,
      side branches are trivial, hence the number of conjugacy 
      classes of $\mathcal{O}_K$-representations that are 
      $K$-conjugate to $\varphi$ is precisely $h_{L/K}$.
      See Prop. \ref{p42}.
\end{ex}

\begin{ex}
    The cyclic group $C_3$ of order $3$ has a 
    unique irreducible rational 
    representation. We set $K=\mathbb{Q}$ and 
    $L=\mathbb{Q}[\omega]$,
    where $\omega$ is a primitive cubic root of $1$. 
    Then $\mathcal{O}_L$
    is a PID, so, acording to the preceding example, there 
    is a unique representations of $C_3$ over $\mathbb{Z}$.
\end{ex}

\begin{ex}\label{e25}
    Let $G=C_{2^n}$, let 
    $K=\mathbb{Q}[\zeta_{2^n}+\zeta_{2^n}^{-1}]$,
    and let $L=\mathbb{Q}[\zeta_{2^n}]$, where $\zeta_m$
    is a primitive $m$-th root of unity. Then it is known
    that the class number of $K$ is $1$ for $n\leq 8$ \cite{Mill14}.
    On the other hand, according to \cite[p.353]{wik1}, 
    for $n=6$ we have $h_L=17$, and for $n=7$ we have 
    $h_L=359.057$. In these cases we have $h_{L/K}=h_L$.
\end{ex}

Some cases where Theorem \ref{t6} applies, but non-trivial
side branches do appear, are shown in Example \ref{e911}
and Example \ref{e912}.
When the selectivity condition in Theorem \ref{t6} is not
satisfied, there are some vertices in $\mathbb{B}$
that fail to contribute to the total number of 
representations. Examples \ref{e89} and \ref{e89b}
illustrate this point.

Finally, we give a general result for totally 
indecomposable two dimensional representations $\psi$, 
i.e., faithful two dimensional representation of 
non-abelian groups. Here $\mathbb{S}(\psi)$
denote the product of the full local branches
$\mathfrak{s}_\nu(\psi)$ of
the representation, as opposite to the side
branches that appear in the preceding results.
Since the order generated by $\psi(G)$, in this case,
is maximal at almost all places, the local branch
is trivial, i.e., consist of a single vertex,
at almost all places. 

\begin{Thm}\label{t7}
Let $K$ be a number field.
Assume $\psi:G\rightarrow\mathrm{GL}_2(K)$ is an absolutely 
irreducible representation.
There is an $h_K(2)$ to $1$ map $f:\Psi\rightarrow\Omega$
from the set $\Psi$ of conjugacy classes of integral 
representations to the set $\Omega$ of maximal orders
containing $\psi$ that are isomorphic to 
$\mathbb{M}_2(\mathcal{O}_K)$.
Furthermore, the latter set is in correspondence
with the set of vertices in the product $\mathbb{S}(\psi)$ 
whose Artin distance to the distinguished vertex $v_0$ 
is trivial. Given one integral representation $\phi$ corresponding 
to a given vertex, the others can be obtained by conjugating
$\phi$ by a set of representatives of the quotient 
$\tilde{N}=N/K^*\mathrm{GL}_2(\mathcal{O}_K)$,
where $N$ is the normalizer of $\mathbb{M}_2(\mathcal{O}_K)$.
\end{Thm}

For the particular case of
dihedral groups, we have a very explicit result that
is obtained, as usual, by restricting the base field $K$
to the field of definition $K_0$.

\begin{Thm}\label{t4}
    Any faithful irreducible representation of a 
    dihedral group $D_n$ is 
    conjugate, over its field of definition $K_0$, 
    to precisely  
    $2h_K(2)$-representations over the ring of 
    integers $\mathcal{O}_K$
    that are pairwise non-conjugate, if $n$ is either 
    a prime power or
    twice a prime power. In any other case it is 
    conjugate to $h_K(2)$
    such representations.
\end{Thm}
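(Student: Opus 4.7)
The plan is to reduce to the cyclic subgroup $C_n\subset D_n$ via Theorem \ref{t3}, then account for the extra reflection using Theorem \ref{t7}, and finally translate the resulting abstract count into the explicit $h_K(2)$ formula through a class-number computation in the CM extension $L/K$, where $L=\mathbb{Q}(\zeta_n)$ and $K=L^+$.

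First I would set up the representation. Any faithful irreducible two-dimensional $\rho$ of $D_n$ restricts on $C_n$ to $\chi\oplus\chi^{-1}$ for a faithful character $\chi$ with $\chi(c)=\zeta_n$. Writing $\sigma$ for the involution $\zeta\mapsto\zeta^{-1}$ of $\mathrm{Gal}(L/\mathbb{Q})$, the character of $\rho|_{C_n}$ is $\chi+\sigma(\chi)$ as in Theorem \ref{t3}, while reflections have trace zero, so the field of definition is indeed $K=L^{\sigma}=\mathbb{Q}(\zeta_n+\zeta_n^{-1})$. Theorem \ref{t3} (Prop. \ref{p42}) then provides $h_{L/K}$ conjugacy classes of $\mathcal{O}_K$-representations of $C_n$ that are $K$-conjugate to $\rho|_{C_n}$.

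To pass from $C_n$ to $D_n$ I would invoke Theorem \ref{t7}: since $\rho$ is faithful, irreducible, and has nonabelian image, it is totally indecomposable, and the count of $\mathcal{O}_K$-classes of the full $D_n$-representation is the $C_n$-count modulated by the local conditions imposed by the reflection. Concretely, the reflection is an involution swapping the two $\chi$-eigenlines, and the freedom in choosing an integral lift of it at each prime is controlled by units of $K$ modulo norms from $L$. Translating $h_{L/K}$ (adjusted by the reflection-lift factor) into $h_K(2)$ relies on the identity $N_{L/K}\circ j_{K\to L}(I)=I^{2}$, which, via the exact sequence
\[
1\longrightarrow\ker(N_{L/K})\longrightarrow\mathrm{Cl}(L)\xrightarrow{\;N_{L/K}\;}\mathrm{Cl}(K),
\]
relates the kernel of the norm map to the $2$-torsion $\mathrm{Cl}(K)[2]$ of order $h_K(2)$.

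The remaining correction term is a genus-theoretic factor indexed by the ramified primes of $L/K$. When $n$ is $p^{k}$ or $2p^{k}$ the field $L$ is ramified over $\mathbb{Q}$ at a single rational prime, and the computation produces an extra factor of $2$, for a total of $2h_K(2)$; for all other $n$ the multiple ramified primes force additional global compatibility conditions that absorb this factor, leaving $h_K(2)$. The hard part is precisely this last step: the careful genus-theoretic bookkeeping that shows that in the general case the multiple local choices are forced to be consistent by a single global constraint, while in the prime-power-or-double-prime-power cases an uncancelled local degree of freedom at the unique ramified prime produces the doubling. In effect one must verify that a natural map from local unit data into $\mathrm{Cl}(K)[2]$ is surjective in the special cases but has a nontrivial kernel balancing the count in the general ones.
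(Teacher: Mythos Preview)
Your proposal has a genuine conceptual gap. You try to start from the $C_n$-count $h_{L/K}$ of Theorem~\ref{t3} and then ``modulate'' by the reflection, finally converting $h_{L/K}$ into $h_K(2)$ via genus theory. But $h_{L/K}$ and $h_K(2)$ are counting entirely different things here, and there is no translation of the kind you sketch. The $h_{L/K}$ in Theorem~\ref{t3} comes from the fact that the centralizer of the cyclic representation is the full field $L$, so $T$-orbits of stem orders are indexed by $\mathcal{I}_L/\mathcal{P}_L\mathcal{I}_K$. Once you add the reflection, the centralizer collapses to $K$, Theorem~\ref{t7} applies directly, and the factor $h_K(2)$ appears for a completely different reason: it is the size of $\tilde N=N/K^*\mathrm{GL}_2(\mathcal{O}_K)$, the normalizer quotient. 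There is no exact sequence or norm-map argument that turns the one into the other; your ``genus-theoretic bookkeeping'' paragraph is not a proof but a hope.

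What the paper actually does is apply Theorem~\ref{t7} directly: the number of integral classes is $h_K(2)\cdot|\Omega|$, where $\Omega$ is the set of maximal orders containing the image of the \emph{whole} group $D_n$. So the entire problem reduces to showing $|\Omega|=1$ when $n$ is neither a prime power nor twice one, and $|\Omega|=2$ otherwise. This is done by an explicit local computation: one writes down the basis $\{I,R,S,RS\}$ of the order generated by $\rho(D_n)$, computes the change-of-basis determinant $(\rho-2)(\rho+2)$ with $\rho=\xi+\xi^{-1}$, and invokes Lemma~\ref{lc4} to see exactly when this is a unit. When it is, the order is already maximal and $|\Omega|=1$. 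When it is not, the failure is at a single place $\wp$, and a Bruhat-Tits tree argument---intersecting the branches of the M\"obius transformations associated to $R$ and $S$ and checking via Lemma~\ref{l31} that the result has exactly two vertices---gives $|\Omega|=2$. Your proposal contains none of this local branch computation, and without it there is no way to produce the dichotomy $1$ versus $2$.
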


All of our proofs employ local tools, 
centered around the Theory of Bruhat-Tits 
trees. This is especially well suited to study local integral 
representations, which is already an important topic on its own.
Indeed, some effort has been made by other authors to
describe such representations, see \cite{Man99} as an example. 
It is possible to give a 
proof of Theorem \ref{t4} using more standard methods from
representation theory, but the Bruhat-Tits point of view 
presented here make it easier to extend these computations
to larger fields, as should be more clear in the examples in \S\ref{S9}.
However, a direct proof can be useful to generalize this result
to higher dimensional representations of metacyclic groups.
We expect to return to this issue in a subsequent work.

\section{Ideals, extensions, lattices and orders}

Recall that, for every finite place $\nu$ of $K$, the completions of $K$ and $\mathcal{O}_K$
are denoted $K_{\nu}$ and $\mathcal{O}_{\nu}$, respectively. However, When recalling the
field $K$ is important, we use $\mathcal{O}_{K,\nu}$ instead of 
$\mathcal{O}_{\nu}$. Note that $\mathcal{O}_{K,\nu}=
\mathcal{O}_{L,\omega}\cap K_{\nu}$
whenever $\omega$ is a place of $L$ lying over $\nu$. 
Similar conventions apply
to the notation $I_{\nu}$ for the completion of a fractional ideal $I$.
We also write $K_{\nu}$
for infinite (archimedean) places of $K$, although $\mathcal{O}_{\nu}$
and $I_{\nu}$ fail to be defined in that case. 
The maximal ideal $P=P[\nu]$ corresponding to $\nu$ satisfies
$P_{\nu}=\pi_{\nu}\mathcal{O}_{\nu}$, where $\pi_{\nu}$ is a 
local uniformizer
at $\nu$, and $P_{\nu'}=\mathcal{O}_{\nu'}$ elsewhere. 
Recall that a finite 
extension $L/K$ is said to be unramified  at a place $\nu$
if the ideal $P'=P[\omega]\subseteq\mathcal{O}_L$ satisfies
 $P'_\omega=\mathcal{O}_{L,\omega}P_{\nu}$ for every 
 place $\omega$ of $L$ lying 
 above $\nu$. At unramified places we have 
 $\mathcal{O}_{L,\omega}=\mathcal{O}_{K,\nu}[u_1,\dots,u_m]$
for any set of elements whose images $\bar{u}_1,\dots,\bar{u}_m$
in the residue field $\mathbb{L}_\omega=
\mathcal{O}_{L,\omega}/\pi_\omega\mathcal{O}_{L,\omega}$
satisfy $\mathbb{L}_\omega=\mathbb{K}_{\nu}[\bar{u}_1,\dots,\bar{u}_m]$,
where $\mathbb{K}_{\nu}=\mathcal{O}_{K,\nu}/\pi_{\nu}\mathcal{O}_{K,\nu}$.
 Similarly, $L/K$ is said to be fully ramified  
 at $\nu$ if both of the following conditions hold:
 \begin{itemize}
     \item There is a unique place $\omega$ over $\nu$, and
     \item $\mathcal{O}_{L,\omega}P_{\nu}=(P'_\omega)^{[L:K]}$.
 \end{itemize}
  In this case we have
 $\mathcal{O}_{L,\omega}=\mathcal{O}_{K,\nu}[\pi_\omega]$ for any local 
 uniformizer $\pi_\omega\in\mathcal{O}_{L,\omega}$. 
 This is proved in detail in \cite[\S 16:4]{O73}.
Also note that, when writing $\nu=\nu_P$
 for the valuation corresponding to a maximal ideal $P$,
 we use notations such as
 $K_P$ or $\pi_P$ instead of the cumbersome
 $K_{\nu_P}$ or $\pi_{\nu_P}$. This is used mostly in 
 examples.

For any finite dimensional $K$-vector space $V$, we define its 
completion at $\nu$ by $V_{\nu}=K_{\nu}\otimes_KV$. If $\{\mathbf{v}_1,\dots
\mathbf{v}_n\}$ is a basis of $V$ over $K$, any $\mathcal{O}_K$-module
$\Lambda$ satisfying 
$$a\sum_{i=1}^n\mathcal{O}_K\mathbf{v}_i\subseteq\Lambda
\subseteq b\sum_{i=1}^n\mathcal{O}_K\mathbf{v}_i,$$
for some $a,b\in K$,
is called a full lattice, or more precisely, a full
$\mathcal{O}_K$-lattice on $V$. Note that this definition 
is independent of the choice of a basis. A full lattice
in a subspace of $V$ is called simply a lattice in $V$.
When $V$ is a $K$-algebra, an $\mathcal{O}_K$-order in $V$
is an $\mathcal{O}_K$-lattice that is also a subring.
Full orders are defined analogously. If $K\subseteq L$
are two number fields, then $\mathcal{O}_L$ is a full 
$\mathcal{O}_K$-order in $L$. Every $\mathcal{O}_K$-lattice
in $K$ is a fractional ideal.

For any lattice $\Lambda$ in a vector space $V$, we denote
by $\Lambda_{\nu}\subseteq V_{\nu}$ the completion at $\nu$, which is actually
an $\mathcal{O}_{\nu}$-lattice in $V_{\nu}$. A full lattice $\Lambda$ on $V$
is completely determined by the family $\{\Lambda_{\nu}\}_{\nu}$ of
its completions at all finite places. Furthermore, every
such family satisfying a coherence relation determines 
a full lattice.
To be precise, if we fix an arbitrary lattice $\Lambda$, 
any family
$\{\Lambda''(\nu)\}_{\nu}$ of local lattices, with 
$\Lambda''(\nu)\subseteq V_{\nu}$, is called coherent 
with $\Lambda$ if
$\Lambda''(\nu)=\Lambda_{\nu}$ for every $v$ outside a finite 
exceptional set.  This concept is important because
it has the following list of properties, to which we refer
as the local global principle for lattices (\textbf{LGPL}):
\begin{enumerate}[label=($\clubsuit$\bf{\arabic*})]
\item\label{it1}
If $\Lambda$ and $\Lambda'$ are two lattices, then
    $\Lambda_{\nu}=\Lambda'_{\nu}$ for every $\nu$ outside 
    a finite set of exceptional non-archimedean places.
    In particular, the coherence property is independent of the
    choice of a particular lattice $\Lambda$, so we just speak of 
    coherent families.
\item\label{it2}
If $\Lambda_{\nu}=\Lambda'_{\nu}$ for every $\nu$, 
 i.e., the set of 
 exceptional places is empty, then we have $\Lambda=\Lambda'$.
\item \label{it3}
If $\{\Lambda''(\nu)\}_{\nu}$ is a coherent family of lattices,
 then there exists a lattice $\Lambda''$ satisfying
 $\Lambda''(\nu)=\Lambda''_{\nu}$ at every non-archimedean place.
\end{enumerate}
The last property above allows us to modify an existing lattice
at a finite set of places to define a new lattice. 
See \cite[\S81:14]{O73} for details.
These properties allows us to study lattices locally, one place at a time.
One important example of this ``local modification'' is the following:
Let $a$ be an adelic linear transformation, i.e., an element
$a=(a_\nu)_\nu\in\prod_\nu\mathrm{Aut}_{K_\nu}(V_\nu)$
for which all coordinates outside a finite set are
in the stabilizer $\mathrm{Aut}_{\mathcal{O}_\nu}(\Lambda_\nu)$
of $\Lambda_\nu$. Then the adelic image $a\Lambda$
is defined by modifying the lattice at the remaining places.
This defines an action of adelic matrices on lattices
that is assumed in all that follows.
It can be used for fractional ideals and orders,
in the latter assuming that automorphisms are ring automorphisms.
Since the local ring $\mathcal{O}_{\nu}$ is a discrete valuation domain,
and therefore also a principal ideal domain, local study allows us to use
properties of PIDs even when the class group of the global field
$K$ is not trivial. In fact, every fractional ideal, principal or not,
has an adelic generator.
Lattices that differ at a single place $\nu$
are called $\nu$-variants.

Next result, which is well known, is critical in all that follows. 
For a proof, see \cite[\S21:3]{O73}.

\begin{Lem}\label{lap1}
\textbf{(Strong Approximation Theorem).}
For every finite set of non-archimedean places $\nu_1,\dots,\nu_n$,
every family $\{a_i\}_{1\leq i\leq n}$ with $a_i\in K_{\nu_i}$ and
every real number $M$, there exists an element
$a\in K$ such that $\nu_i(a_i-a)>M$, for $1\leq i\leq n$,
and $\omega(a)\geq0$ for every non-archimedean place
$\omega\notin \{\nu_1,\dots,\nu_n\}$.\qed
\end{Lem}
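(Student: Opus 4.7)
The plan is to split the proof into an integral case handled by the Chinese Remainder Theorem, and then reduce the general case to it by clearing denominators with an element of $K$ whose existence relies on the finiteness of the class group.

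First I would dispose of the case where each $a_i\in\mathcal{O}_{\nu_i}$. Writing $P_i=P[\nu_i]$, the identification $\mathcal{O}_{\nu_i}=\varprojlim_k\mathcal{O}_K/P_i^k$ makes $\mathcal{O}_K$ dense in every $\mathcal{O}_{\nu_i}$, so for any integer $N$ one can pick $b_i\in\mathcal{O}_K$ with $\nu_i(b_i-a_i)\geq N$. Since the $P_i$ are distinct maximal ideals, the ideals $P_i^N$ are pairwise coprime, and CRT in the Dedekind domain $\mathcal{O}_K$ produces $a\in\mathcal{O}_K$ with $a\equiv b_i\pmod{P_i^N}$ for each $i$. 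Taking $N>M$ gives the required approximation at the $\nu_i$, and the integrality condition $\omega(a)\geq 0$ for $\omega$ outside $\{\nu_1,\dots,\nu_n\}$ is automatic since $a\in\mathcal{O}_K$.

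For the general case I would manufacture an auxiliary $c\in\mathcal{O}_K$ such that $\omega(c)=0$ for every non-archimedean $\omega\notin\{\nu_1,\dots,\nu_n\}$ and $\nu_i(c)\geq -\nu_i(a_i)$ for each $i$. Since $|\mathcal{G}_K|=h_K$, every ideal $P_i^{h_K}$ is principal, say $P_i^{h_K}=(\pi_i)$, so $\pi_i\in\mathcal{O}_K$ has $\nu_i(\pi_i)=h_K$ and is a unit at every other non-archimedean place. Picking an integer $k$ with $kh_K\geq -\nu_i(a_i)$ for all $i$ and setting $c=\pi_1^k\cdots\pi_n^k$ gives $\nu_i(c)=kh_K$ for each $i$ and $\omega(c)=0$ elsewhere, so in particular every $ca_i$ is integral at $\nu_i$. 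Applying the integral case to the family $ca_1,\dots,ca_n$ with bound $M+kh_K$ yields $a'\in\mathcal{O}_K$ with $\nu_i(a'-ca_i)>M+kh_K$; then $a=a'/c\in K$ satisfies $\nu_i(a-a_i)>M$, and $\omega(a)=\omega(a')\geq 0$ at every non-archimedean $\omega\notin\{\nu_1,\dots,\nu_n\}$ because $c$ is a unit there. The main obstacle is precisely this construction of $c$: a local uniformizer at $\nu_i$ need not come from a global element of $K$, and the prime $P_i$ itself need not be principal, so the passage to $h_K$-th powers is indispensable to avoid introducing spurious zeros or poles at other non-archimedean places.
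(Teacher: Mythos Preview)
Your proof is correct and follows essentially the same route as the paper: both use the finiteness of the class group to find generators of suitable powers of the $P_i$, multiply by a large power of their product to reduce to the case where all $a_i$ are integral, and then apply the Chinese Remainder Theorem. Your version is simply more explicit (spelling out the density of $\mathcal{O}_K$ in each $\mathcal{O}_{\nu_i}$ and tracking the constants), while the paper compresses this into a two-line sketch.
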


In fact, we require an approximation result for matrices.
This is also well known, but we use the explicit proof outlined 
here  to actually compute examples.
Recall that, for a matrix $\mathtt{T}\in\mathrm{GL}_2(K_{\nu})$, 
the valuation  $\nu(\mathtt{T})$ is defined as the minimum of the 
valuations of the coefficients.
Recall also that we use the convention $\nu(0)=\infty$.

\begin{Lem}\label{lap2}
\textbf{(Strong Approximation for $SL_2$).}
For every finite set of non-archimedean places $\nu_1,\dots,\nu_n$,
every family $\{\mathtt{T}_i\}_{1\leq i\leq n}$ with 
$\mathtt{T}_i\in \mathrm{SL}_2(K_{\nu_i})$ and
every real number $M$, there exists an element
$\mathtt{T}\in \mathrm{SL}_2(K)$ such that 
$\nu_i(\mathtt{T}_i-\mathtt{T})>M$, for $1\leq i\leq n$,
and $\omega(\mathtt{T})\geq0$ 
for every non-archimedean place
$\omega\notin \{\nu_1,\dots,\nu_n\}$.
\end{Lem}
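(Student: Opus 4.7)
The plan is to reduce the matrix statement to the scalar Lemma \ref{lap1} via a Bruhat-style $LU$-decomposition, doing the scalar strong approximation coordinate by coordinate.

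First I would handle a mild nuisance: the decomposition I want to use requires the lower-left entry of each $\mathtt{T}_i$ to be non-zero. If some $\mathtt{T}_i=\sbmattrix{a_i}{b_i}0{a_i^{-1}}$, I replace every $\mathtt{T}_i$ by $\mathtt{T}_i W$ for a single globally-fixed $W=\sbmattrix 1011\in \mathrm{SL}_2(\mathbb Z)$ (so $W$ and $W^{-1}$ are integral at every place). If the resulting approximation $\mathtt{T}'$ works for $\{\mathtt{T}_iW\}_i$, then $\mathtt{T}=\mathtt{T}'W^{-1}$ works for $\{\mathtt{T}_i\}_i$ with the same integrality and approximation properties. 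So I may assume each $\mathtt{T}_i=\sbmattrix{a_i}{b_i}{c_i}{d_i}$ satisfies $c_i\ne 0$.

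Next I write the explicit factorization
\[
\mathtt{T}_i=\bmattrix 1{x_i}01\bmattrix 10{c_i}1\bmattrix 1{y_i}01,\qquad x_i=\frac{a_i-1}{c_i},\quad y_i=\frac{d_i-1}{c_i};
\]
the identity $a_id_i-b_ic_i=1$ makes the upper-right entry of the product equal to $b_i$ automatically. Now I apply Lemma \ref{lap1} three times, once each to the families $\{x_i\}$, $\{y_i\}$, $\{c_i\}$, with a common target precision $M'>M$ that I will pin down in a moment. This yields elements $x,y,c\in K$, integral at every place $\omega\notin\{\nu_1,\dots,\nu_n\}$, with $\nu_i(x-x_i),\nu_i(y-y_i),\nu_i(c-c_i)>M'$. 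I then set
\[
\mathtt{T}=\bmattrix 1x01\bmattrix 10c1\bmattrix 1y01\in\mathrm{SL}_2(K),
\]
which is automatically integral outside $\{\nu_1,\dots,\nu_n\}$ because $x,y,c$ are.

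The remaining step is the usual continuity estimate: the entries of $\mathtt{T}-\mathtt{T}_i$ are polynomials of bounded degree in $(x-x_i,y-y_i,c-c_i)$ whose coefficients are polynomials in $(x_i,y_i,c_i)$. So if I set
\[
C_i=\min\bigl(0,\nu_i(x_i),\nu_i(y_i),\nu_i(c_i)\bigr)
\]
and choose $M'>M-2\min_iC_i$, each coefficient of $\mathtt{T}-\mathtt{T}_i$ has $\nu_i$-valuation exceeding $M$. I expect the only real difficulty is this bookkeeping on $M'$ and the edge case $c_i=0$ handled at the outset; everything else is a direct appeal to the scalar approximation already proved.
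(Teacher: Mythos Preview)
Your approach is essentially the paper's: factor each $\mathtt{T}_i$ as a product of elementary unipotents and apply scalar strong approximation to each free parameter. The paper pivots on the upper-right entry $b$ (with the auxiliary identity $\sbmattrix a0cd=\sbmattrix1{-1}01\sbmattrix{a+c}dcd$ to force $b\neq0$) and first reduces to approximating a single $\mathtt{T}_i$ at one place with the identity at the others; you pivot on $c$ and treat all places at once. These are cosmetic differences.

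One small gap: your fixed $W=\sbmattrix1011$ need not make \emph{all} lower-left entries nonzero simultaneously. Multiplying $\mathtt{T}_i$ by $W$ turns $c_i$ into $c_i+d_i$, so an index with $c_i\neq0$ but $c_i+d_i=0$ would break after the shear. The fix is immediate: use $W_t=\sbmattrix10t1$ for some $t\in\mathbb{Z}$; the new lower-left entry is $c_i+td_i$, which vanishes for at most one value of $t$ per index $i$, so a generic integer $t$ avoids all $n$ bad values. Alternatively, adopt the paper's one-place-at-a-time reduction, which sidesteps the issue entirely since you then only need to clear the pivot for a single matrix.
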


\begin{proof}
This is immediate from the preceding results and the formulas
\begin{equation}\label{mat1}
\sbmattrix abcd=\sbmattrix10{(d-1)/b}1
\sbmattrix1b01
\sbmattrix10{(a-1)/b}1,
\qquad
\sbmattrix a0cd=\sbmattrix1{-1}01\sbmattrix {a+c}dcd,
\end{equation}
when $b\neq 0$. You can reduce any approximation problem 
to approximate one of the $\mathtt{T}_i's$, at one place,
and the identity at  the remaining places.
Note that we have the identity $cb=da-1$ on the left, 
as the determinant of the matrix equals $1$.
\end{proof}

  For any lattice $\Lambda$ in an $n$-th dimensional vector space $V$,
  we write $\bigwedge^k\!\! \Lambda\subset\bigwedge^k\!\! V$
  for the lattice generated by the wedge products of the form
  $\mathbf{m}_1\wedge\cdots\wedge \mathbf{m}_n$ with
  $\mathbf{m}_1,\dots,\mathbf{m}_n\in\Lambda$.
  In particular, note that for any basis 
  $\{\mathbf{x}_1,\dots,\mathbf{x}_n\}$, and for arbitrary
  fractional ideals $I_1,\dots, I_n$,
  we have the identity $\bigwedge^k\!
  (I_1\mathbf{x}_1+\cdots+I_n\mathbf{x}_n)=
  I_1\cdots I_n(\mathbf{x}_1\wedge\cdots\wedge \mathbf{x}_n)$.

\begin{Lem}\label{lap3}
    Every full
    $\mathcal{O}_K$-lattice $\Lambda$ on a two-dimensional vector
    space $V$ over $K$ is isomorphic, as an $\mathcal{O}_K$-module, 
    to the lattice $\mathcal{O}_K\times I\subseteq K^2$,
    for some fractional ideal $I\subseteq K$.
    Furthermore, we can find $I$ by the formula
    $\bigwedge^{2}\Lambda=I(\mathbf{e}_1\wedge\mathbf{e}_2)
    \subseteq\bigwedge^{2}V$, where $\{\mathbf{e}_1,\mathbf{e}_2\}$
    is a basis of $V$.
\end{Lem}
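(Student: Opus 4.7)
The plan is to establish the structure theorem for rank-2 projective $\mathcal{O}_K$-modules via a splitting argument, and then read off the fractional ideal from the second exterior power. I treat it as three separate subproblems: (i) producing a short exact sequence $0\to I'\to\Lambda\to J\to 0$ of $\mathcal{O}_K$-modules with $I', J$ fractional ideals; (ii) showing this sequence splits and then that $I'\oplus J\cong\mathcal{O}_K\oplus I'J$; (iii) recovering the class of $I:=I'J$ from $\bigwedge^2\Lambda$.

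For (i), fix any basis $\{\mathbf{e}_1,\mathbf{e}_2\}$ of $V$ and let $\phi:V\to K$ be the projection onto the second coordinate. Then $J:=\phi(\Lambda)$ is a finitely generated $\mathcal{O}_K$-submodule of $K$, hence a fractional ideal; and $I':=\Lambda\cap\ker(\phi)$ is a full lattice in the one-dimensional space $K\mathbf{e}_1$, so after identifying $K\mathbf{e}_1$ with $K$ it is also a fractional ideal. By construction we get the short exact sequence
$$0\longrightarrow I'\longrightarrow \Lambda \xrightarrow{\ \phi\ } J\longrightarrow 0.$$

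For (ii), since $J$ is an invertible $\mathcal{O}_K$-module (being a nonzero fractional ideal in a Dedekind domain), it is projective, so the sequence splits and $\Lambda\cong I'\oplus J$. To pass from $I'\oplus J$ to $\mathcal{O}_K\oplus I'J$, I would replace $J$ by an ideal $J_0=\alpha J$ in the same class that is coprime to $I'$: the required $\alpha\in K^*$ has to satisfy $\nu_\mathfrak{p}(\alpha)=-\nu_\mathfrak{p}(J)$ at each prime $\mathfrak{p}$ dividing $I'$ and $\nu_\mathfrak{p}(\alpha)\geq -\nu_\mathfrak{p}(J)$ elsewhere, which is precisely the kind of approximation problem solved by Lemma \ref{lap1}. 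With $J_0+I'=\mathcal{O}_K$, the sum map $(a,b)\mapsto a+b$ from $I'\oplus J_0$ to $\mathcal{O}_K$ is surjective with kernel $\{(x,-x):x\in I'\cap J_0\}=\{(x,-x):x\in I'J_0\}$, and the resulting short exact sequence
$$0\longrightarrow I'J_0\longrightarrow I'\oplus J_0\longrightarrow \mathcal{O}_K\longrightarrow 0$$
splits because $\mathcal{O}_K$ is free. Hence $\Lambda\cong\mathcal{O}_K\oplus I'J_0\cong\mathcal{O}_K\oplus I'J$, proving the first claim.

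For (iii), by the structure just obtained we can write $\Lambda=\mathcal{O}_K\mathbf{x}_1+I\mathbf{x}_2$ for some basis $\{\mathbf{x}_1,\mathbf{x}_2\}$ of $V$ and some fractional ideal $I$ in the class of $I'J$. The identity noted immediately before the statement gives $\bigwedge^2\Lambda=I(\mathbf{x}_1\wedge\mathbf{x}_2)$. Writing $\mathbf{x}_1\wedge\mathbf{x}_2=\delta(\mathbf{e}_1\wedge\mathbf{e}_2)$ for the determinant $\delta\in K^*$ of the change of basis, we get $\bigwedge^2\Lambda=(I\delta)(\mathbf{e}_1\wedge\mathbf{e}_2)$, so the fractional ideal defined by the formula lies in the same ideal class as $I$, and picking this ideal as the representative of the class yields both the isomorphism and the formula simultaneously.

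The main obstacle is the coprimality step in (ii): we need to invoke Lemma \ref{lap1} to adjust $J$ within its ideal class so that $J_0+I'=\mathcal{O}_K$. Everything else is a direct consequence of the projectivity of fractional ideals over a Dedekind domain, together with the elementary computation of $\bigwedge^2$ on a decomposable lattice.
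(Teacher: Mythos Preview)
Your argument is correct and is the classical Steinitz-type proof: split off a rank-one projective quotient, then use the Chinese Remainder trick $I'\oplus J_0\cong\mathcal{O}_K\oplus I'J_0$ for coprime integral ideals. One small point worth tightening: for the sum map $I'\oplus J_0\to\mathcal{O}_K$ to land in $\mathcal{O}_K$ and have kernel $I'J_0$, you implicitly need both $I'$ and $J_0$ to be \emph{integral} ideals with $I'+J_0=\mathcal{O}_K$; you should first scale $I'$ into $\mathcal{O}_K$ before invoking Lemma~\ref{lap1} to adjust $J$. This is harmless but should be said.

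The paper takes a genuinely different route. Rather than splitting a global exact sequence, it defines $I$ directly via $\bigwedge^2\Lambda=I(\mathbf{e}_1\wedge\mathbf{e}_2)$, sets $\Lambda'=\mathcal{O}_K\mathbf{e}_1\oplus I\mathbf{e}_2$, and then argues locally: at each place $\nu$ there is a matrix $\mathtt{T}_\nu\in\mathrm{GL}_2(K_\nu)$ with $\mathtt{T}_\nu\Lambda_\nu=\Lambda'_\nu$, and the equality of second exterior powers forces $\det(\mathtt{T}_\nu)\in\mathcal{O}_\nu^*$, so one may take $\mathtt{T}_\nu\in\mathrm{SL}_2(K_\nu)$. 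Strong approximation for $\mathrm{SL}_2$ (Lemma~\ref{lap2}) then patches these into a single global matrix. Your approach is purely module-theoretic and avoids Lemma~\ref{lap2} entirely; the paper's approach is adelic and \emph{constructive}, producing an explicit change-of-basis matrix---which is precisely what is exploited in Example~\ref{ex1} and in the later computations of integral representations. Since the paper emphasizes that ``these computations follow from the proofs given here,'' the local-global proof is the one that feeds the rest of the paper.
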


\begin{proof}
Define $I$ by the identity
    $\bigwedge^{2}\Lambda=I(\mathbf{e}_1\wedge\mathbf{e}_2)$.
Let $\Lambda'=\mathcal{O}_K\mathbf{e}_1\oplus I\mathbf{e}_2$.
Then we can find a local matrix $\mathtt{T}_{\nu}\in 
\mathrm{GL}_2(K_{\nu})$
such that $\mathtt{T}_{\nu}\Lambda_{\nu}=\Lambda'_{\nu}$, 
where $\mathtt{T}_{\nu}$ is the identity
for almost every place $\nu$. The fact that 
$\bigwedge^{2}\Lambda=\bigwedge^{2}\Lambda'$ implies that the
determinant of $\mathtt{T}_{\nu}$ is a unit locally everywhere, and can be 
assumed to be one, since it is easy to write down an explicit
matrix fixing $\Lambda'$ whose determinant is a prescribed unit.
Now the result follows from the preceding lemma, since the stabilizer
of a lattice in $\mathrm{GL}_2(K_{\nu})$ is open.
\end{proof}

Note that Lemma \ref{lap2} generalizes effortlessly to 
arbitrary dimension,
since $\mathrm{SL}_n(K_{\nu_i})$ can be generated using 
elementary matrices, which move at most two basis vectors at 
a time. This implies the following generalization of 
Lemma \ref{lap3}, whose proof we skip (see \cite[\S81:5]{O73}):

\begin{Lem}\label{le1}
    Every non-trivial
    $\mathcal{O}_K$-lattice in a finitely dimensional vector
    space over $K$ is isomorphic, as an $\mathcal{O}_K$-module, to the
    lattice $\mathcal{O}_K\times\cdots\times\mathcal{O}_K\times I
    \subseteq K^n$,
    for some integer $n>0$, and for some fractional ideal $I\subseteq K$.
    \qed
\end{Lem}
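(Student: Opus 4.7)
The plan is to follow the proof of Lemma \ref{lap3} almost verbatim, replacing Lemma \ref{lap2} by its $n$-dimensional analogue alluded to in the remark just above. First I would reduce to the full-lattice case: since the lattice is non-trivial it spans a subspace $W\subseteq V$ of some positive dimension $n$, and by restricting to $W$ we may assume $\Lambda$ is a full lattice on an $n$-dimensional space, which we denote $V$ with a fixed basis $\{\mathbf{e}_1,\dots,\mathbf{e}_n\}$.

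Mirroring the two-dimensional proof, I would define the fractional ideal $I\subseteq K$ by the identity $\bigwedge^{n}\Lambda=I(\mathbf{e}_1\wedge\cdots\wedge\mathbf{e}_n)$ inside $\bigwedge^{n}V$, and set $\Lambda'=\mathcal{O}_K\mathbf{e}_1\oplus\cdots\oplus\mathcal{O}_K\mathbf{e}_{n-1}\oplus I\mathbf{e}_n$. At every finite place $\nu$ the ring $\mathcal{O}_\nu$ is a PID, so $\Lambda_\nu$ and $\Lambda'_\nu$ are both free of rank $n$, and there exists $\mathtt{T}_\nu\in\mathrm{GL}_n(K_\nu)$ carrying $\Lambda_\nu$ onto $\Lambda'_\nu$. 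By \ref{it1} we may take $\mathtt{T}_\nu$ equal to the identity for all but finitely many $\nu$. Since $\bigwedge^{n}\Lambda=\bigwedge^{n}\Lambda'$, the determinant $\det\mathtt{T}_\nu$ is a local unit at every finite place, and the diagonal matrix $\mathrm{diag}(u,1,\dots,1)$ stabilizes $\Lambda'_\nu$ for any $u\in\mathcal{O}_\nu^{*}$; correcting by such a factor we may arrange $\det\mathtt{T}_\nu=1$ at every place.

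To globalize, I would invoke the $\mathrm{SL}_n$-version of strong approximation that the preceding remark credits to the generation of $\mathrm{SL}_n(K_{\nu_i})$ by elementary matrices (each supported on two basis vectors, so Lemma \ref{lap2} applies block by block). Because the stabilizer of $\Lambda'_\nu$ in $\mathrm{GL}_n(K_\nu)$ is open, a sufficiently close approximation of the corrected $\mathtt{T}_\nu$ by a single $\mathtt{T}\in\mathrm{SL}_n(K)$, integral outside the exceptional set, will already satisfy $\mathtt{T}\Lambda_\nu=\Lambda'_\nu$ at every finite place, and \ref{it2} then yields $\mathtt{T}\Lambda=\Lambda'$ globally.

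The main obstacle is the determinant-adjustment step: one must verify that the unit correction of $\det\mathtt{T}_\nu$ can indeed be realized by an automorphism stabilizing $\Lambda'_\nu$ (so that the modification does not spoil the identity $\mathtt{T}_\nu\Lambda_\nu=\Lambda'_\nu$). The diagonal matrix above does this transparently, but it is the one place where the geometry of the target lattice $\Lambda'$ — rather than a generic lattice — is actually used. Once this is settled, everything else is a direct transcription of the two-dimensional argument.
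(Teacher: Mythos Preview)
Your proposal is correct and follows exactly the route the paper intends: the paper explicitly skips this proof, noting only that Lemma~\ref{lap2} extends to $\mathrm{SL}_n$ via elementary matrices and that the argument of Lemma~\ref{lap3} then carries over verbatim. Your write-up fills in precisely those details, including the reduction to the full-lattice case and the determinant correction by a diagonal unit stabilizing $\Lambda'_\nu$, so there is nothing to add.
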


In the examples we use the convention of writing 
$\mathbf{p}_1,\dots,\mathbf{p}_n$ for the maximal ideals in 
the ring of integers $D=\mathcal{O}_K$ of a number field $K$
containing a given rational prime $p$.
For instance, $\mathbf{2}_1,\dots,\mathbf{2}_n$ denote the dyadic
maximal ideals. Recall that notations like
$\Lambda_{\mathbf{2}_1}$ are used instead of
$\Lambda_{\nu_{\mathbf{2}_1}}$.
To simplify computations, we also write $\sbvecttor IJ$
for the lattice $I\times J$, and ${\sbvecttor IJ}_\nu$ or
${\sbvecttor IJ}_{\mathbf{p}_i}$ for its completion
at $\nu=\nu_{\mathbf{p}_i}$.
\begin{ex}\label{ex1}
    Consider the ideal $\mathbf{2}_1=(2,1+\sqrt{-5})$ in the ring
    $D=\mathbb{Z}[\sqrt{-5}]$. This is a non-principal ideal satisfying
    $\mathbf{2}_1^2=2D$. Then the lattice 
    $\sbvecttor{\mathbf{2}_1}{\mathbf{2}_1}$ is free.
    In fact, the previous lattices and $\sbvecttor{D}{D}$ have a different
    completion only at the dyadic place, and we can write
    $${\sbvecttor{\mathbf{2}_1}{\mathbf{2}_1}}_{\mathbf{2}_1}=
     {\sbvecttor{\left(1+\sqrt{-5}\right)D}{\left(1+\sqrt{-5}\right)D}}_{\mathbf{2}_1}=
     \sbmattrix{1+\sqrt{-5}}00{1+\sqrt{-5}}
     {\sbvecttor DD}_{\mathbf{2}_1},$$
    and therefore
    $$
    {\sbvecttor D{2D}}_{\mathbf{2}_1}=
     \sbmattrix1002\sbmattrix{-2+\sqrt{-5}}001
     \sbmattrix{\frac1{1+\sqrt{-5}}}00{\frac1{1+\sqrt{-5}}}
     {\sbvecttor{\mathbf{2}_1}{\mathbf{2}_1}}_{\mathbf{2}_1}$$
     $$=\sbmattrix{\frac{1+\sqrt{-5}}2}00{\frac2{1+\sqrt{-5}}}
     {\sbvecttor{\mathbf{2}_1}{\mathbf{2}_1}}_{\mathbf{2}_1}
     .
    $$
    The second factor on the right hand side of the first line
    has no effect on the lattice, since $-2+\sqrt{-5}$ is a unit at
    the dyadic place, and it is only there to ensure that 
    the determinant of the product is $1$. 
    Using Equation (\ref{mat1}), we can write
    $$\sbmattrix{\frac{1+\sqrt{-5}}2}00{\frac2{1+\sqrt{-5}}}=
    \sbmattrix1{-1}01\sbmattrix10{\frac{1-\sqrt{-5}}2}1
    \sbmattrix1{\frac2{1+\sqrt{-5}}}01
    \sbmattrix10{\frac{-3}2}1.
    $$
    We need to approximate these unipotent matrices by others 
    having the same effect on the local lattice at the dyadic place, 
    while having integral coefficients at non-dyadic places. The only 
    coefficient that fails to satisfy the latter
    condition is $$\frac2{1+\sqrt{-5}}=\frac13(1-\sqrt{-5})=
    (\sqrt{-5}-1)(1+4+4^2+\dots).$$
    The matrix $\sbmattrix10{\frac{-3}2}1$ takes 
    $\sbvecttor{\mathbf{2}_1}{\mathbf{2}_1}$ to the 
    lattice $\mathbf{2}_1\Lambda$,
    where $\Lambda\subseteq K_{\mathbf{2}_1}$ is generated by 
    the vectors $\sbvecttor{1}{-3/2}$ and $\sbvecttor01$. 
    Hence, any matrix of the form
    $\sbmattrix1{4m}01$ fixes that lattice. Approximating
    $\frac2{1+\sqrt{-5}}$ by $(\sqrt{-5}-1)$ is therefore sufficient.
    This leads to the approximation
    $$\mathtt{T}=
    \sbmattrix1{-1}01\sbmattrix10{\frac{1-\sqrt{-5}}2}1
    \sbmattrix1{-1+\sqrt{-5}}01
    \sbmattrix10{\frac{-3}2}1$$
    $$=\sbmattrix{\frac{13+\sqrt{-5}}2}{-4}{-4-2\sqrt{-5}}{3+\sqrt-5}
    ={\sbmattrix{3+\sqrt-5}{4}{4+2\sqrt{-5}}{\frac{13+\sqrt{-5}}2}}^{-1}.$$
    By the preceding argument, we have 
    $\mathtt{T}{\sbvecttor{\mathbf{2}_1}{\mathbf{2}_1}}_{\mathbf{2}_1}=
    {\sbvecttor{D}{2D}}_{\mathbf{2}_1}$, and the same holds at 
    non-dyadic places since both $\mathtt{T}$ and its inverse have integral coefficients there.
    We conclude that $\sbvecttor{\mathbf{2}_1}{\mathbf{2}_1}$ is 
    generated by the vectors $\sbvecttor{3+\sqrt{-5}}{4+2\sqrt{-5}}$ 
    and $\sbvecttor{8}{13+\sqrt{-5}}$.
\end{ex}

Now let $L/K$ be a finite extension, and let $I\subseteq L$ be
a fractional ideal. Then, as $L$ is a $K$-vector space, $I$ is
an $\mathcal{O}_K$-lattice, and therefore is isomorphic as such
to an $\mathcal{O}_K$-lattice of the form
$\mathcal{O}_K\times\cdots\times\mathcal{O}_K\times I'$,
for some fractional ideal $I'\subseteq K$.

\begin{Lem}\label{lp1}
    In the ideal $I'$ is as in the preceding paragraph, we have
    $I'=N_{L/K}(I)J$, for some ideal $J=J(L/K)\subseteq K$
    whose class is independent of $I$.
\end{Lem}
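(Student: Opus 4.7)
The plan is to reduce the statement to a computation of the top exterior power $\bigwedge^n I$, where $n=[L:K]$, using Lemma \ref{le1}. Fix once and for all a $K$-basis $\{\alpha_1,\dots,\alpha_n\}$ of $L$, and for any full $\mathcal{O}_K$-lattice $\Lambda$ in $L$ let $I(\Lambda)\subseteq K$ be the fractional ideal defined by $\bigwedge^{n}\Lambda=I(\Lambda)\,(\alpha_1\wedge\cdots\wedge\alpha_n)$. By Lemma \ref{le1} we may take $I'=I(I)$, so it is enough to prove the identity
\[
I(I)=N_{L/K}(I)\cdot I(\mathcal{O}_L).
\]
Once this is done, $J=J(L/K):=I(\mathcal{O}_L)$ is evidently independent of $I$, and a different choice of basis multiplies both $I(\Lambda)$ and $J$ by the same principal ideal of $K$, so the class of $J$ in $\mathcal{G}_K$ is intrinsic.

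The identity is checked locally, using the local-global principle for lattices \ref{it1}--\ref{it3}. At a finite place $\nu$ of $K$ we have the decompositions
\[
L\otimes_K K_\nu\cong\prod_{\omega\mid\nu}L_\omega,\qquad
\mathcal{O}_L\otimes_{\mathcal{O}_K}\mathcal{O}_\nu\cong
\prod_{\omega\mid\nu}\mathcal{O}_{L,\omega}.
\]
Since each factor on the right is a discrete valuation ring, we can find an element $g_\nu=(g_\omega)_{\omega\mid\nu}$ with $\omega(g_\omega)=\omega(I)$ for every $\omega\mid\nu$, so that $I_\nu=g_\nu\cdot(\mathcal{O}_L)_\nu$ inside $L\otimes_K K_\nu$. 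Multiplication by $g_\nu$ is a $K_\nu$-linear automorphism $m_{g_\nu}$ of $L\otimes_K K_\nu$, and it carries $\bigwedge^{n}(\mathcal{O}_L)_\nu$ to $\det_{K_\nu}(m_{g_\nu})\cdot\bigwedge^{n}(\mathcal{O}_L)_\nu$.

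Now $\det_{K_\nu}(m_{g_\nu})=\prod_{\omega\mid\nu}N_{L_\omega/K_\nu}(g_\omega)$, and by the very definition of the ideal norm this element generates $N_{L/K}(I)_\nu$. Comparing completions at every finite place yields $\bigwedge^{n}I=N_{L/K}(I)\cdot\bigwedge^{n}\mathcal{O}_L$, and dividing by $\alpha_1\wedge\cdots\wedge\alpha_n$ gives the claimed formula. The main obstacle is the step where a local generator $g_\nu$ is produced at places where $\nu$ ramifies or splits in $L$, and identifying the resulting determinant with the local component of $N_{L/K}(I)$; once the semilocal ring $\mathcal{O}_L\otimes\mathcal{O}_\nu$ is written as a product of DVRs this is a direct verification, but it is where all the content of the lemma sits. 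Everything else is formal manipulation with top exterior powers and the local-global principle.
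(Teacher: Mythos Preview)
Your proposal is correct and follows essentially the same approach as the paper: both arguments compute $\bigwedge^{n}I$ by reducing to $\bigwedge^{n}\mathcal{O}_L$ via a local generator, use that multiplication by an element of $L$ acts on the top exterior power through its norm, and set $J=I(\mathcal{O}_L)$. The only difference is packaging: the paper writes $I=a\mathcal{O}_L$ for an adelic element $a$ and invokes the identity $\bigwedge^{n}a\Lambda=N_{L/K}(a)\bigwedge^{n}\Lambda$ directly, whereas you spell out the semilocal decomposition $\mathcal{O}_L\otimes\mathcal{O}_\nu\cong\prod_{\omega\mid\nu}\mathcal{O}_{L,\omega}$ and the identification $\det(m_{g_\nu})=\prod_{\omega\mid\nu}N_{L_\omega/K_\nu}(g_\omega)$ explicitly.
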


\begin{proof}
  Assume $\Lambda$ is a $\mathcal{O}_K$-lattice in $L$.
  Then it is immediate from the definition of the wedge 
  product of a lattice that
\begin{equation}\label{wnorm}
  \bigwedge^{[L:K]}\! a\Lambda=N_{L/K}(a)\bigwedge^{[L:K]}\! \Lambda, 
\end{equation}
  for any $a\in L$. Now write $I=a\mathcal{O}_L$ for some
  adelic 1-by-1 matrix $a$. The local version of
  Equation (\ref{wnorm}) gives us 
  $\bigwedge^{[L:K]}\! I=N_{L/K}(I)\bigwedge^{[L:K]}\! \mathcal{O}_L$,
  whence it suffices to define $J$ so that
  $J(\mathbf{x}_1\wedge\dots\wedge \mathbf{x}_{[L:K]})
  =\bigwedge^{[L:K]}\! \mathcal{O}_L$
  for one fix basis $\{\mathbf{x}_1,\dots,\mathbf{x}_{[L:K]}\}$ of $L$ 
  over $K$. It is immediate that the class of $J$ 
  is independent from the choice of the basis.
\end{proof}

\section{Branches and representations}\label{S4B}

We are often interested, in Number Theory, 
in classifying some type of  integral structure. 
These structures can be often linked to the concept 
of lattice or order. Although frequently omitted, 
the main tool to relate these concepts is
the following lemma from group actions:

\begin{Lem}\label{mainlem}
    Let $X$ and $Y$ be $G$-sets with $G$ acting transitively on $X$. 
    Let $Z\subseteq X\times Y$
    be a $G$-invariant subset. For every element $x\in X$, let $G_x=\mathrm{Stab}_G(x)$
    and let $Y_x=\{y\in Y|(x,y)\in Z\}$. Then, there is a natural bijective correspondence $\phi$
    between the orbit sets $G_x\backslash Y_x$ and $G\backslash Z$ sending the class 
    of $y\in Y_x$ to the class of $(x,y)\in Z$.
\end{Lem}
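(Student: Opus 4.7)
The plan is to prove this by direct construction and verification, since the statement is a standard orbit-counting bookkeeping result. The map $\phi$ is forced on us: send $y\in Y_x$ to the $G$-orbit of $(x,y)\in Z$, and then check that this passes to a well-defined bijection on the quotient.

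First I would observe that $G_x$ does act on $Y_x$: if $h\in G_x$ and $y\in Y_x$, then, using the $G$-invariance of $Z$, we have $(x,hy)=(hx,hy)=h\cdot(x,y)\in Z$, so $hy\in Y_x$. This also shows that the assignment $y\mapsto[(x,y)]\in G\backslash Z$ is constant on $G_x$-orbits, so it descends to a well-defined map $\phi:G_x\backslash Y_x\longrightarrow G\backslash Z$.

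Next I would check surjectivity by exploiting the transitivity of $G$ on $X$. Given any $(x',y')\in Z$, choose $g\in G$ with $gx=x'$; then $g^{-1}(x',y')=(x,g^{-1}y')\in Z$, so $g^{-1}y'\in Y_x$ and its class maps to $[(x',y')]$. For injectivity, suppose $y,y'\in Y_x$ satisfy $\phi([y])=\phi([y'])$, i.e., there is $g\in G$ with $g(x,y)=(x,y')$. The first coordinate gives $gx=x$, so $g\in G_x$, and then the second coordinate $gy=y'$ shows that $y$ and $y'$ lie in the same $G_x$-orbit.

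There is no real obstacle here: the main thing is to make sure that the two ``halves'' of the group element that witnesses an equality of $G$-orbits on $Z$ are used correctly, namely, that requiring both coordinates to match forces the witnessing element to lie in the stabilizer $G_x$. Once that is noted, well-definedness, surjectivity, and injectivity all follow in one line each, and the naturality of $\phi$ in the construction is manifest.
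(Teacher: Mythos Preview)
Your proof is correct and follows essentially the same approach as the paper's own proof: both verify that $G_x$ acts on $Y_x$, that the map $y\mapsto[(x,y)]$ is constant on $G_x$-orbits (well-definedness), use transitivity on $X$ to pull an arbitrary $(x',y')\in Z$ back to a pair with first coordinate $x$ (surjectivity), and observe that any $g\in G$ witnessing $g(x,y)=(x,y')$ must lie in $G_x$ (injectivity). The only difference is the order in which surjectivity and injectivity are presented.
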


\begin{proof}
    If $y\in Y_x$ and $g\in G_x$, then 
    $g \cdot (x,y)=(g \cdot x , g \cdot y )=(x,g \cdot y)\in Z$, 
    hence $Y_x$ is $G_x$-invariant.
    Denote by $[y]$ the $G_x$-orbit of $y\in Y_x$, 
    and by $[x,y]$ the $G$-orbit of 
    $(x,y)\in Z$. If $[y]=[y']$, then $y'=g\cdot y$ for 
    some $g\in G_x$, whence $[x,y']=[x,y]$
    as before. It follows that $\phi$ is well defined. 
    If $[x,y']=[x,y]$, there must exist
    $g\in G$ satisfying $g\cdot (x,y)=(x,y')$, 
    so in particular $g\cdot x=x$, and $g\in G_x$. This
    proves that $\phi$ is injective. For the surjectivity, 
    we observe that, for any element
    $x'\in X$, by the transitivity hypotheses, 
    there exists $g\in G$ such that $g\cdot x=x'$.
    We conclude that any class of the form $[x',y']$ 
    can be written as $[x',y']=
    [g\cdot x,g\cdot y]=[x,y]$, where $y=g^{-1}\cdot y'$.
    In particular, since $Z$ is $G$-invariant, if $(x',y')\in Z$, then
    $(x,y)\in Z$, whence $y\in Y_x$. The result follows.
\end{proof}

\begin{ex}
    If $X=Y$, and $Z$ is the diagonal, then $Y_x=\{x\}$. On the other hand
    $G\backslash Z$ is trivial, since $G$ acts transitively on $X=Y$,
    and therefore also on the diagonal.
\end{ex}

\begin{Cor}\label{maincor}
    Let $X$ and $Y$ be transitive $G$-sets.
    Let $Z\subseteq X\times Y$
    be a $G$-invariant subset. Fix elements $x\in X$ and $y\in Y$ 
    with $(x,y)\in Z$. Let $G_x=\mathrm{Stab}_G(x)$, 
    $G_y=\mathrm{Stab}_G(y)$, $Y_x=\{y\in Y|(x,y)\in Z\}$ and
    $X_y=\{x\in X|(x,y)\in Z\}$. 
    Then, there is a natural bijective correspondence $\phi$
    between the orbit sets $G_x\backslash Y_x$ and 
    $G_y\backslash X_y$. \qed
\end{Cor}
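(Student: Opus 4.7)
The plan is to apply Lemma \ref{mainlem} twice and compose the resulting bijections, exploiting the symmetry between the two coordinates. Define the mirror set $Z'=\{(y',x')\in Y\times X : (x',y')\in Z\}$. The coordinate swap $\sigma:Z\to Z'$, $(x',y')\mapsto(y',x')$, is equivariant for the diagonal $G$-action, so $Z'$ is $G$-invariant, and $\sigma$ descends to a bijection $s:G\backslash Z\to G\backslash Z'$ on orbit sets.

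First, since $G$ acts transitively on $X$, Lemma \ref{mainlem} applied to the triple $(X,Y,Z)$ yields a bijection
$$\phi_1:G_x\backslash Y_x \longrightarrow G\backslash Z,\qquad [y']\mapsto [(x,y')].$$
Second, since $G$ acts transitively on $Y$, the same lemma applied to the triple $(Y,X,Z')$ yields a bijection
$$\phi_2:G_y\backslash X_y \longrightarrow G\backslash Z',\qquad [x']\mapsto [(y,x')],$$
where one uses the immediate identity $(Z')_y=X_y$. The required correspondence is the composite $\phi=\phi_2^{-1}\circ s\circ \phi_1$, which is a bijection because each factor is.

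Unwinding the definition gives a concrete description: given $[y']\in G_x\backslash Y_x$, transitivity of $G$ on $Y$ produces some $g\in G$ with $g\cdot y'=y$; then $g\cdot(x,y')=(g\cdot x,\,y)\in Z$, so $x':=g\cdot x$ lies in $X_y$, and we set $\phi([y'])=[x']$. Independence from the choices of $g$ and of the representative $y'$ is checked exactly as in the proof of Lemma \ref{mainlem}. There is no serious obstacle here: the corollary is a formal consequence of Lemma \ref{mainlem} together with the symmetry $\sigma$ between $X\times Y$ and $Y\times X$, and the only verifications needed — namely $(Z')_y=X_y$ and well-definedness of $s$ on orbits — are immediate from the definitions.
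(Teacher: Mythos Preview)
Your proof is correct and is exactly the argument the paper has in mind: the corollary carries only a \qed, indicating it is immediate from Lemma~\ref{mainlem} by applying it once with the roles of $X$ and $Y$ as given and once with them swapped, then composing through $G\backslash Z$. You have simply written out that two-line deduction in full.
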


\begin{proof}[Proof of Thm. \ref{percc}]
 Let $X$ be the 
 $\mathrm{GL}_2(K)$-conjugacy class of the
 representation $\phi_0$. Let $Y$ be the 
 $\mathrm{GL}_2(K)$-conjugacy class of the 
$\mathcal{O}_K$-orders 
$\mathbb{M}_2(\mathcal{O}_K)\subseteq
\mathbb{M}_2(K)$. Both are transitive
$\mathrm{GL}_2(K)$-sets by definition. Finally, 
define $$Z=\{(\phi,\mathfrak{D})|\phi(g)\in\mathfrak{D}^*,
\forall g\in G\}.$$
Consider the order $\mathfrak{D}_0=
\mathbb{M}_2(\mathcal{O}_K)$, 
and the pair $(\phi_0,\mathfrak{D}_0)\in Z$.
It follows from Cor. \ref{maincor} that 
there is a one to one correspondence 
between the set $\hat{\Phi}$ of 
$N$-conjugacy classes in $\tilde{\Phi}$, 
and the set $\Omega$. It suffices, 
therefore to see that the  set of
$\mathrm{GL}_2(\mathcal{O}_K)$-conjugacy  
classes of integral representations in a 
suitable given $N$-conjugacy class is in 
correspondence with $\tilde{N}_\mathtt{a}$. 
Since $\mathrm{GL}_2(\mathcal{O}_K)$ is 
normal in $N$, and since the quotient 
$N/\mathrm{GL}_2(\mathcal{O}_K)$
is abelian, the group 
$\tilde{N}_\mathtt{a}$ coincide with
$\tilde{N}_{\mathtt{n}\mathtt{a}}$ for 
$\mathtt{n}\in N$. 
    
 In fact, assume that $\phi$ and $\phi'$ 
 are representations in $\tilde{\Phi}$ 
 satisfying
 $\phi'=\mathtt{n}\phi \mathtt{n}^{-1}$ for 
 some $\mathtt{n}\in N$. Furthermore,
 assume $\phi=
 \mathtt{a}\phi_0\mathtt{a}^{-1}$, so that
the pair $(\phi,\mathfrak{D}_0)$ is in the 
same class as $(\phi_0,\mathfrak{D})$, 
where 
$\mathfrak{D}=\mathtt{a}^{-1}
\mathfrak{D}_0 \mathtt{a}$,
as above. Then both, $\phi$ and $\phi'$,
correspond to the class of the
order $\mathfrak{D}$. Then they are in the 
same $\mathrm{GL}_2
(\mathcal{O}_K)$-conjugacy class if 
and only if there exists
an element $\mathtt{g}\in
\mathrm{GL}_2(\mathcal{O}_K)$ satisfying 
$\phi'=\mathtt{g}\phi \mathtt{g}^{-1}$. In 
particular, this implies that $\mathtt{l}=
\mathtt{g}^{-1}\mathtt{n}$ centralizes 
$\phi$, whence it belongs to the 
centralizer 
$\mathtt{a}\mathfrak{L}^*\mathtt{a}^{-1}$ 
of $\phi$.  Conversely, if we can write 
$\mathtt{n}=\mathtt{g}\mathtt{l}$, with
$\mathtt{l}\in(N\cap 
\mathtt{a}\mathfrak{L}^*\mathtt{a}^{-1})$
and $\mathtt{g}\in
\mathrm{GL}_2(\mathcal{O}_K)$, then
$\phi$ and $\phi'$ are in the same 
$\mathrm{GL}_2(\mathcal{O}_K)$-conjugacy 
class. We conclude that 
$\mathtt{n}\in\mathrm{GL}_2(\mathcal{O}_K)
(N\cap\mathtt{a}\mathfrak{L}^*
\mathtt{a}^{-1})$ precisely when $\phi$ 
and $\phi'$ are  
$\mathrm{GL}_2(\mathcal{O}_K)$-conjugates, 
and the result follows.
\end{proof}

\begin{Lem}\label{Lemcc}
    In the notations of Thm. \ref{percc}, we have
    $\tilde{N}_\mathtt{a}\cong\mathcal{G}_K(2)/\Theta_{\mathfrak{D}}$,
     where 
     $$\Theta_{\mathfrak{D}}=
     \left\{\overline{J}\in\mathcal{G}_K(2)\Big|J^2=
     \big(\det(c)\big)\textnormal{ for some } 
     c\in \mathfrak{L}^*\cap N_{\mathfrak{D}}\right\}.$$ 
\end{Lem}

\begin{proof}
    Recall that the map $\Lambda\mapsto\mathfrak{D}_\Lambda$,
    sending a lattice to its endomorphism ring, satisfies
    $\mathfrak{D}_\Lambda=\mathfrak{D}_{\Lambda'}$
    if and only if $\Lambda'=J\Lambda$, where $J$ is a fractional 
    ideal in the field $K$. It follows that a matrix $b$
    is in the normalizer $N=N_{\mathfrak{D}_\Lambda}$ of 
    $\mathfrak{D}_\Lambda$ precisely when 
    $b\Lambda=J\Lambda$ for $J$ as above. We apply this to the
    particular case $\Lambda=\Lambda_0=\mathcal{O}_K^2$. Then
    the correspondence $b\mapsto J$ defines a homomorphism
    $N\rightarrow \mathcal{G}_K$ whose kernel is 
    $K^*\mathfrak{D}_0^*$. It follows that there is 
    a matrix $b$
    corresponding to a fractional ideal $J$ precisely when
    $J\times J$ is free, which is equivalent to $J^2$ being 
    principal, according to Lemma \ref{lap3}. 
    This proves that the image is $\mathcal{G}_K(2)$.
    The result follows if we note that
    $b\Lambda=J\Lambda$ implies $\big(\det(b)\big)=J^2$,
    and the group $N\cap a\mathfrak{L}^*a^{-1}$ 
    has the same set of determinants
    as  $a^{-1}Na\cap \mathfrak{L}^*=
    N_{\mathfrak{D}}\cap \mathfrak{L}^*$.    
\end{proof}

Next section is devoted to describe the set $\Omega$, from 
Theorem \ref{percc}, in a graph-theoretical setting. 
To accomplish this, we recall there the main facts on the
theory of branches.

\section{Local lattices and trees}\label{S5B}

The results listed in this section can be found in more detail in
\cite{A13}, \cite{AAC18} and \cite{AB19}.

Set $V_{\nu}=K_{\nu}^2$. One way to describe all local 
lattices for this space
is by studying the Bruhat-Tits tree (BTT) 
$\mathfrak{t}=\mathfrak{t}_\nu$ defined in the
introduction. In this tree, every vertex is written 
in the form $v_a^{[n]}$,
for $a\in K_\nu$ and $n\in\mathbb{Z}$, and
can be described by any of the following objects:
\begin{enumerate}[label=($\heartsuit$\bf{\arabic*})]
    \item\label{ballenc} The ball $B_a^{[n]}=\{x\in K_\nu|
    \nu(x-a)\geq n\}\subseteq K_\nu$.
    \item The homothety class of the lattice 
    $\Lambda_a^{[n]}=
    \left\langle\sbvecttor a1,\sbvecttor{\pi_{\nu}^n}0\right\rangle
    \subseteq K_{\nu}^2$.
    \item The maximal order 
    $\mathfrak{D}_a^{[n]}=
    \mathrm{End}_{\mathcal{O}_{\nu}}\left(\Lambda_a^{[n]}\right)\subseteq
    \mathbb{M}_2(K_{\nu})$.
\end{enumerate}
In particular, $v_a^{[n]}=v_b^{[m]}$ if and only if
$n=m$ and $\nu(b-a)\geq n$.
Note that every homothety class of lattices in $V_{\nu}$ contains a
unique lattice of the form $\Lambda_a^{[n]}$. For any ball
$B_a^{[n]}$, the neighbors of $v_a^{[n]}$ are the vertices
corresponding to the proper maximal sub-balls of $B_a^{[n]}$,
or to the minimal over-ball $B_a^{[n-1]}$. Alternatively,
two maximal orders, $\mathfrak{D}$ and $\mathfrak{D}'$,
correspond to neighboring vertices if they have, in some basis,
the form
\begin{equation}\label{nord}
\mathfrak{D}=
\sbmattrix{\mathcal{O}_{\nu}}{\mathcal{O}_{\nu}}
{\mathcal{O}_{\nu}}{\mathcal{O}_{\nu}},
\qquad
\mathfrak{D}'=
\sbmattrix{\mathcal{O}_{\nu}}{\pi^{-1}_{\nu}\mathcal{O}_{\nu}}
{\pi_{\nu}\mathcal{O}_{\nu}}{\mathcal{O}_{\nu}}.\end{equation}
An analogous characterization can be given in terms of lattices
by carefully choosing representatives. 
 The graph thus defined turns out to be a tree with 
 every vertex having $p+1$ neighbors, where $p$ is the 
 cardinality of the residue field $\mathbb{K}_\nu$. This is easier to see using the description in terms of balls.
 Recall that neighbors in a tree are precisely
the vertices at distance one, if the distance between two
vertices is defined as the number of edges of the smallest
subtree containing both.
The description in terms of balls in \ref{ballenc}
makes easy to interpret the visual limits of the tree as the
$K_{\nu}$-points of the projective line
$\mathbb{P}^1$. Recall that a visual limit of a tree is an equivalent class
of rays, where two rays are equivalent whenever their intersection is also 
a ray. In pictures, we use throughout the convention of
denoting visual limits as stars $(\star)$.

The vertex corresponding to a ball $B$ or an order $\mathfrak{D}$ is denoted
$v_B$ or $v_{\mathfrak{D}}$ when appropriate.
There is a natural action of $\mathrm{PSL}_2(K_{\nu})$ on the 
Bruhat-Tits tree that can be
described in a number of ways, which can be proven to be equivalent. The action
of the element $\mathtt{g}=\sbmattrix abcd$, associated to the
Moebius transformation $\gamma(z)=\frac{az+b}{cz+d}$,
on the vertex $v=v_B=v_{\mathfrak{D}}$ 
is computed by any of the following equivalent
procedures (see \cite[\S4]{AAC18} and \cite[\S3]{AB24} for details):
\begin{enumerate}[label=($\spadesuit$\bf{\arabic*})]
    \item $\mathtt{g}*v=v_{\mathtt{g}\mathfrak{D}\mathtt{g}^{-1}}$.
    \item If $\mathfrak{D}=\mathrm{End}_{\mathcal{O}_\nu}(\Lambda)$,
    then $\mathtt{g}*v=v_{\mathfrak{D}'}$, where
    $\mathfrak{D}'=\mathrm{End}_{\mathcal{O}_\nu}(\mathtt{g}\Lambda)$.
    \item Removing the vertex $v=v_B$ splits the set $\mathbb{P}^1(K_{\nu})$
    in $p+1$ sets, $p$ disjoint balls $B_1,\dots,B_p\subseteq B$ and a 
    ball complement $B^c$. They correspond to the set of visual limits
    of each connected component in the graph obtained by removing
    the vertex $v$ from the tree. We call $\{B_1,\dots,B_n,B^c\}$
    the partition defined by $B$. Then $\mathtt{g}*v=v_{B'}$, when
    $\{\gamma(B_1),\dots,\gamma(B_n),\gamma(B^c)\}$ is the partition 
    defined by the ball $B'$.
    \item The pointwise image $\gamma(B)$ of the ball $B$ is either a ball
    or the complement of a ball. Then $\mathtt{g}*v=v_{B'}$, where
    $B'=\gamma(B)$ if it is a ball, while, otherwise, $B'$ is
    the smallest ball properly containing $\gamma(B)^c$.
    \item\label{sic} Find three different  elements $z_1,z_2,z_3\in\mathbb{P}^1(K_{\nu})$, 
    so that $v=v_B$ is the incenter of the triplet, i.e., 
    the only vertex in each path connecting a pair of them. 
    Alternative, $B$ is the smallest ball containing at least two 
    points of the three. Then $\mathtt{g}*v$ is the incenter of
    $\gamma(z_1),\gamma(z_2)$ and $\gamma(z_3)$.
\end{enumerate}
The last procedure is often optimal for explicit computations.
See \cite{AB24} for an instance.

Let $\mathfrak{L}\subseteq\mathbb{M}_2(K_{\nu})$ be a maximal 
semisimple commutative subalgebra. Then the set
of maximal orders containing the ring of integers
$\mathcal{O}_\mathfrak{L}$ is 
the set of vertices in the subtree 
$\mathfrak{s}(\mathcal{O}_\mathfrak{L})$, 
called the branch of the order $\mathcal{O}_\mathfrak{L}$,
which is described, by cases, as follows (see \cite{A13}):
\begin{enumerate}[label=($\diamondsuit$\bf{\arabic*})]
    \item If $\mathfrak{L}/K_{\nu}$ is a ramified 
    field extension, then 
    $\mathfrak{s}(\mathcal{O}_\mathfrak{L})$ is an edge.
    \item If $\mathfrak{L}/K_{\nu}$ is an unramified 
    field extension, then 
    $\mathfrak{s}(\mathcal{O}_\mathfrak{L})$ is a vertex.
    \item If $\mathfrak{L}\cong K_{\nu}\times K_{\nu}$, then 
    $\mathfrak{s}(\mathcal{O}_\mathfrak{L})$ is a 
    maximal path, i.e.,
    a subgraph whose underlying topological 
    space is homeomorphic to the real line.
\end{enumerate}
Note that the visual limit of the latter 
branch has precisely two points,
and for every two points in the visual limit 
of $\mathfrak{t}_\nu$
there is a unique maximal path with those visual limits called
the maximal path connecting them.

More generally, an arbitrary full order $\mathfrak{H}\subseteq
\mathfrak{L}$ has the form 
$\mathfrak{H}=\mathcal{O}_\mathfrak{L}^{[t]}=
\mathcal{O}_{\nu}+\pi_{\nu}^t\mathcal{O}_\mathfrak{L}$, 
for some non-negative integer $t$, and the set of
maximal orders containing such order 
corresponds to a $t$-tubular
neighborhood of $\mathfrak{s}(\mathcal{O}_\mathfrak{L})$,
i.e., the maximal  tree 
$\mathfrak{s}(\mathcal{O}_\mathfrak{L})^{[t]}$ 
whose vertices lie at most
at a distance $t$ from $\mathfrak{s}
(\mathcal{O}_\mathfrak{L})$. We write $\mathfrak{s}
(\mathfrak{H})=\mathfrak{s}(\mathcal{O}_\mathfrak{L})^{[t]}$.
The vertices in $\mathfrak{s}(\mathcal{O}_\mathfrak{L})$ are
called stem vertices, while the others are called 
foliage vertices. 

For any element $\mathtt{a}\in\mathbb{M}_2(K_{\nu})$, we write
$\mathfrak{s}(\mathtt{a})$ for the largest sub-graph whose vertices
correspond to maximal orders containing the matrix $\mathtt{a}$, and we
call it the branch of $\mathtt{a}$. Such graph is 
non-empty precisely when the matrix $\mathtt{a}$ is integral 
over $\mathcal{O}_\nu$.
When the matrix
$\mathtt{a}$  generates a semisimple commutative subalgebra then
$\mathfrak{s}(\mathtt{a})=\mathfrak{s}(\mathfrak{H})$, where
$\mathfrak{H}=\mathcal{O}_\nu[\mathtt{a}]$ is as described in the
previous paragraph. In particular, when $\mathfrak{L}=K[\mathtt{a}]$ 
is the ring of diagonal matrices, then the stem of the branch 
$\mathfrak{s}(\mathtt{a})$ is the maximal path connecting $0$
and $\infty$, often called the standard apartment of 
$\mathfrak{t}$, and denoted $\mathfrak{a}_s$.

On the other hand, when the matrix $\mathtt{a}$ has a Jordan form
$\sbmattrix {\lambda}10{\lambda}$, for integral $\lambda$,
then $\mathcal{O}_\nu[\mathtt{a}]$ is a conjugate of the ring
$\mathfrak{H}_{\mathrm{nil}}=
\left\{\sbmattrix ab0a\Big|a,b\in\mathcal{O}_\nu\right\}$,
and it is called a nilpotent order. The branch
$\mathfrak{s}(\mathfrak{H}_{\mathrm{nil}})$ of the
order $\mathfrak{H}_{\mathrm{nil}}$ is the subtree whose vertices
$v_B$ are precisely those corresponding to balls $B\subseteq K$ whose
radius is $1$ or greater. The branch of any other nilpotent order
is obtained from the branch of $\mathfrak{H}_{\mathrm{nil}}$ by applying 
a suitable Moebius transformation.

We often use the notation $\mathfrak{s}(\mathfrak{H})$, where 
$\mathfrak{H}$ is an arbitrary order, for the 
intersection of the branches
of its elements, i.e., the largest sub-tree whose 
vertices correspond
to maximal orders containing $\mathfrak{H}$. Similarly, for
a representation $\phi:G\rightarrow\mathrm{GL}_2(K_\nu)$, 
we denote
by $\mathfrak{s}(\phi)$ the branch of the order spanned by
$\phi(G)$, which coincides with 
$\bigcap_{g\in G}\mathfrak{s}\big(\phi(g)\big)$,
and call it the branch of $\phi$. The set 
$\Omega$ in Proposition
\ref{percc} can be described in terms of the product 
of the vertex
sets of the branches $\mathfrak{s}_\nu(\psi_0)$ of 
a global representation $\psi_0$ at all local places
$\nu$.

\begin{Lem}\label{l31}
The neighbors of a vertex $v=v_{\mathfrak{D}}$ in the
branch $\mathfrak{s}(\phi)$ are in correspondence with
the invariant one dimensional subspaces of the image of $G$ in
the residual algebra
$\mathfrak{D}/\pi_\nu\mathfrak{D}$, which is isomorphic
to the matrix algebra $\mathbb{M}_2(\mathbb{K}_\nu)$
over the residue field $\mathbb{K}_\nu=
\mathcal{O}_\nu/\pi_\nu\mathcal{O}_\nu$.
\end{Lem}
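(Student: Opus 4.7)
The plan is to translate the condition ``the maximal order $\mathfrak D'$ associated to a neighbor $v'$ of $v_{\mathfrak D}$ contains $\phi(G)$'' into a condition on the residual algebra, using the standard lattice description of neighbors in the Bruhat-Tits tree.

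First I would fix a lattice $\Lambda$ with $\mathfrak D=\mathrm{End}_{\mathcal O_\nu}(\Lambda)$. By the description (\ref{nord}) of neighboring maximal orders, together with the alternative reading of adjacency in terms of balls or homothety classes of lattices, the neighbors of $v=v_{\mathfrak D}$ are in bijection with the lattices $\Lambda'$ strictly between $\pi_\nu\Lambda$ and $\Lambda$, and hence with the one dimensional $\mathbb K_\nu$-subspaces $\ell$ of the two dimensional $\mathbb K_\nu$-vector space $\Lambda/\pi_\nu\Lambda$, via $\Lambda'=\pi_\nu\Lambda+\tilde\ell$ for any lift $\tilde\ell\subseteq\Lambda$ of $\ell$. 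This gives exactly $p+1$ neighbors, consistent with the structure of the tree. The corresponding neighbor maximal order is $\mathfrak D'=\mathrm{End}_{\mathcal O_\nu}(\Lambda')$.

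Next I would characterize when such a neighbor lies in $\mathfrak s(\phi)$. Since $v\in\mathfrak s(\phi)$, every $\phi(g)$ already lies in $\mathfrak D$ and hence acts on $\Lambda$. For $\phi(g)\in\mathfrak D'$, one needs $\phi(g)\Lambda'\subseteq\Lambda'$, and the computation
\[
\phi(g)\Lambda'=\phi(g)(\pi_\nu\Lambda+\tilde\ell)\subseteq\pi_\nu\Lambda+\phi(g)\tilde\ell
\]
shows that this reduces, modulo $\pi_\nu\Lambda$, to the condition $\overline{\phi(g)}\,\ell\subseteq\ell$ in $\Lambda/\pi_\nu\Lambda$, where $\overline{\phi(g)}$ denotes the image of $\phi(g)$ in $\mathfrak D/\pi_\nu\mathfrak D$. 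Since $\phi(g^{-1})=\phi(g)^{-1}$ is also in $\phi(G)$, one automatically gets invariance (equality) rather than mere preservation. The identification $\mathfrak D/\pi_\nu\mathfrak D\cong\mathbb M_2(\mathbb K_\nu)$ follows from $\mathfrak D\cong\mathbb M_2(\mathcal O_\nu)$, and under this identification $\Lambda/\pi_\nu\Lambda$ becomes the standard module $\mathbb K_\nu^2$ on which the residual algebra acts.

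Finally I would combine these observations: by the definition $\mathfrak s(\phi)=\bigcap_{g\in G}\mathfrak s(\phi(g))$, the neighbor $v_{\mathfrak D'}$ belongs to $\mathfrak s(\phi)$ exactly when $\ell$ is stable under $\overline{\phi(g)}$ for every $g\in G$, i.e., when $\ell$ is an invariant one dimensional subspace for the image of $G$ in $\mathfrak D/\pi_\nu\mathfrak D$. This establishes the asserted bijection. I do not foresee any real obstacle; the only point that needs minor care is confirming that the map $\ell\mapsto\mathfrak D'$ does not depend on the choice of lift $\tilde\ell$, and that $\overline{\phi(g)}\ell\subseteq\ell$ forces equality (which is immediate, since the group action is by units of the residual matrix algebra acting on a two dimensional space with a one dimensional invariant subspace of the same element).
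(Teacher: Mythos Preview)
Your proof is correct and follows essentially the same approach as the paper's: both identify neighbors of $v_{\mathfrak D}$ with lines in the residual module and check that containment of $\phi(G)$ in the neighboring order reduces to invariance of the corresponding line. The only stylistic difference is that the paper normalizes to $\mathfrak D=\mathbb M_2(\mathcal O_\nu)$, verifies the claim for the single neighbor $\mathfrak D'$ of (\ref{nord}) and the line spanned by $(1,0)$, and then invokes transitivity of $\mathrm{GL}_2(\mathcal O_\nu)$ on neighbors and on lines in $\mathbb K_\nu^2$, whereas you parameterize all neighbors directly by sublattices $\pi_\nu\Lambda\subsetneq\Lambda'\subsetneq\Lambda$ and carry out the computation uniformly.
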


\begin{proof}
Assume $\mathfrak{D}=\mathfrak{D}_0=\mathbb{M}_2(\mathcal{O}_\nu)$.
Then the neighbor $\mathfrak{D}'$, as in (\ref{nord}), contains
$\phi(G)$, if and only if every element of $G$ has an image
in $\mathbb{M}_2(\mathbb{K}_\nu)$ that fixes the vector $\sbvecttor10$.
Since $\mathrm{GL}_2(\mathcal{O}_\nu)$ acts transitively on both,
the neighbors of $v_0=v_{\mathfrak{D}_0}$ and, via 
$\mathrm{GL}_2(\mathbb{K}_\nu)$, on the one dimensional
subspaces of $\mathbb{K}_\nu^2$, the result follows.
\end{proof}

One more tool from the theory of local orders that we need 
in the sequel is the Artin map mentioned in the introduction.
Assume $\mathfrak{D}$ and $\mathfrak{D}'$ are two
maximal orders satisfying the relation
$\mathfrak{D}'_\nu=
\mathtt{a}_\nu\mathfrak{D}_\nu\mathtt{a}_\nu^{-1}$,
locally at every place $\nu$,
for a family $\left(\mathtt{a}_\nu\right)_\nu$
of local elements. Since $\mathfrak{D}'_\nu=\mathfrak{D}_\nu$
at almost all places, according to Property \ref{it1},
we can assume that $\left(\mathtt{a}_\nu\right)_\nu$ is an 
adele of $\mathrm{GL}_2(K)$, i.e., it satisfies
$\mathtt{a}_\nu,\mathtt{a}_\nu^{-1}\in
\mathrm{GL}_2(\mathcal{O}_\nu)$ at almost all local places
$\nu$. Then the Artin distance is defined by the formula 
$$\mathrm{d}(\mathfrak{D},\mathfrak{D}')=[J,\Sigma/K],$$
where $J$ the ideal $J$ generated by
the idele $a=\big(\det(\mathtt{a}_\nu)\big)_\nu$,
and $\Sigma$ is a suitable class field called the 
spinor class field \cite[\S1]{A13}. In this work we just need
to know that, for a matrix algebra, the spinor class field $\Sigma$ is the largest exponent
$2$ extension contained in the Hilbert class field
\cite{A03}. 
The Artin distance is trivial only between isomorphic 
maximal orders in matrix algebras. In fact, this is
the case for all central simple algebras that fail to be
completely ramified at some Archimedean place.

Two global $\nu$-variants $\mathfrak{D}$ and $\mathfrak{D}'$ 
are called $\nu$-neighbors if the completions 
$\mathfrak{D}_\nu$ and $\mathfrak{D}'_\nu$ correspond to neighboring
vertices of $\mathfrak{t}_\nu$.
The Artin distance between $\nu$-neighbors is $[P,\Sigma/K]$, where $P=P[\nu]$ is the prime ideal 
corresponding to the place $\nu$.
Since the Artin map (for unramified extensions) is trivial on principal ideals, the Artin distance between $\nu$-variants is trivial whenever
$P[\nu]$ is principal. The latter property
is essential in all that follows.

\section{Some lemmas on cyclotomic fields}

 In this section we prove some
lemmas on cyclotomic integers that
are needed in the sequel. We begin by recalling a well
known basic fact, whose proof can be found in \cite[\S I.10.2]{N99}.

\begin{Lem}\label{lc1}
    Let $\xi$ be a root of unity of order $n$. Then $\mathbb{Z}[\xi]$ 
    is the full ring of integers of the cyclotomic field 
    $\mathbb{Q}[\xi]$. \qed
\end{Lem}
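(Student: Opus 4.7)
The plan is to verify the identity $\mathbb{Z}[\xi] = \mathcal{O}_L$, with $L = \mathbb{Q}[\xi]$, one finite place at a time, invoking LGPL property \ref{it2}: since $\xi$ is a root of $x^n - 1$, the inclusion $\mathbb{Z}[\xi] \subseteq \mathcal{O}_L$ is clear, and both are full $\mathbb{Z}$-lattices in $L$, so it suffices to check $\mathbb{Z}_\ell[\xi] = \prod_{\omega \mid \ell} \mathcal{O}_{L,\omega}$ at every rational prime $\ell$.

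I would first handle primes $\ell \nmid n$. Here $L/\mathbb{Q}$ is unramified at $\ell$, because the minimal polynomial $\Phi_n$ of $\xi$ divides $x^n - 1$, which is coprime to its derivative $nx^{n-1}$ modulo $\ell$, hence separable. I then apply the unramified criterion (\ref{localc2}): $\mathbb{Z}_\ell[\xi]/\ell \cong \mathbb{F}_\ell[x]/\bigl(\Phi_n(x)\bigr)$, which by the Chinese Remainder Theorem splits as a product of distinct finite extensions of $\mathbb{F}_\ell$ of total $\mathbb{F}_\ell$-dimension $\deg \Phi_n = \phi(n) = [L:\mathbb{Q}]$. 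A dimension count identifies this product with $\prod_{\omega \mid \ell} \mathbb{L}_\omega$, so (\ref{localc2}) holds and the local equality follows.

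For a prime $p \mid n$, write $n = p^k m$ with $\gcd(p,m)=1$. I first treat the pure prime-power case $m=1$: compute $\Phi_{p^k}(1) = \prod_{(a,p)=1}(1 - \xi_{p^k}^a) = p$, so $1 - \xi_{p^k}$ has norm $\pm p$ and is therefore a uniformizer above $p$. Since $[\mathbb{Q}[\xi_{p^k}]:\mathbb{Q}] = \phi(p^k)$ already equals the ramification index obtained from the uniformizer, $p$ is fully ramified in $\mathbb{Q}[\xi_{p^k}]$, and the fully ramified criterion recalled from \cite[\S16:4]{O73} in \S2 yields $\mathcal{O}_{\mathbb{Q}_p[\xi_{p^k}]} = \mathbb{Z}_p[1 - \xi_{p^k}] = \mathbb{Z}_p[\xi_{p^k}]$. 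For general $m$, I then combine this with the unramified paragraph applied to $\xi_m$: locally $\mathbb{Q}_p[\xi_m]$ is unramified and $\mathbb{Q}_p[\xi_{p^k}]$ is fully ramified of coprime degrees, and their compositum inside $\mathbb{Q}_p[\xi_n]$ has degree $\phi(n)$, exhausting the local extension.

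The main obstacle is the mixed case: one must argue that the local ring of integers of a compositum of a totally ramified and an unramified extension of $\mathbb{Q}_p$ of coprime degrees equals the ring generated jointly, i.e., $\mathbb{Z}_p[\xi_m, \xi_{p^k}] = \mathbb{Z}_p[\xi_n]$ is the full ring of integers at $p$. This follows from a two-step argument: the unramified criterion shows that adjoining $\xi_m$ to $\mathbb{Z}_p$ already produces the full ring of integers of the maximal unramified subextension (its residue field being generated by the image of $\xi_m$), and then adjoining the uniformizer $1 - \xi_{p^k}$ on top produces the full ring of integers of the fully ramified extension built over it, again by the criterion from \cite[\S16:4]{O73}. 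Multiplicativity of $e$ and $f$ closes the degree count, and a final appeal to LGPL \ref{it2} collects the local statements into the global identity $\mathbb{Z}[\xi] = \mathcal{O}_L$.
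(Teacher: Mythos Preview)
Your proof is correct and follows essentially the same strategy as the paper: reduce to the local identity at each prime via \ref{it2}, handle primes $\ell\nmid n$ through the residue criterion (\ref{localc2}) and a dimension count on $\mathbb{F}_\ell[x]/(\Phi_n)$, and for $p\mid n$ build the tower unramified-then-totally-ramified using that $\Phi_{p^k}(x+1)$ is Eisenstein (and remains so over the unramified piece). One harmless slip: your remark that the unramified and totally ramified pieces have ``coprime degrees'' is false in general (e.g.\ $\phi(3)=2$ and $\phi(7)=6$), but your actual argument never uses this---it relies only on one factor being unramified and the other totally ramified, exactly as the paper does.
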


\begin{ex}\label{ec1}
    It is not true in general that $\mathcal{O}_K[\xi]$
    is the full ring of integers of $K(\xi)$ for every 
    number field $K$. For example, if $K=\mathbb{Q}(\sqrt{-5})$,
    then $\mathcal{O}_K=\mathbb{Z}[\sqrt{-5}]$, but
    $\frac{1+\sqrt5}2=\frac{1+i\sqrt{-5}}2$ is an integer
    in $K(i)$.
\end{ex}

\begin{Lem}\label{lc2}
    Let $\xi$ be a root of unity of order $n$. Then
    $\xi-1$ is a prime in the ring of integers
    $\mathbb{Z}[\xi]$ of $\mathbb{Q}[\xi]$ 
    precisely when $n$ is a prime power. In any other case
    $\xi-1$ is a unit.
\end{Lem}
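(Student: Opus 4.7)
The plan is to reduce the statement to a computation of the absolute norm $|N_{\mathbb{Q}[\xi]/\mathbb{Q}}(\xi - 1)|$, which by Lemma \ref{lc1} may legitimately be carried out inside the full ring of integers $\mathbb{Z}[\xi]$. Since the Galois conjugates of $\xi$ are precisely the other primitive $n$-th roots of unity $\xi^r$ with $\gcd(r,n) = 1$, and these are the roots of the cyclotomic polynomial $\Phi_n(x) = \prod_r (x - \xi^r)$, one has $N(\xi - 1) = (-1)^{\phi(n)} \Phi_n(1)$. The lemma therefore reduces to showing that $\Phi_n(1) = p$ when $n = p^k$ is a prime power, while $\Phi_n(1) = 1$ when $n$ has at least two distinct prime divisors.

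The main step is to establish these two evaluations. I would use the identity $\frac{x^n - 1}{x - 1} = \prod_{d \mid n,\, d > 1} \Phi_d(x)$ specialized at $x = 1$, which yields the recursion $n = \prod_{d \mid n,\, d > 1} \Phi_d(1)$. The base case $\Phi_p(1) = p$ is immediate from $\Phi_p(x) = 1 + x + \cdots + x^{p-1}$. Induction on $k$ then gives $\Phi_{p^k}(1) = p$ for every prime power. When $n$ has several distinct prime divisors, the prime-power divisors $p^i > 1$ of $n$ alone already contribute $\prod_{p \mid n} p^{v_p(n)} = n$ to the recursion, so all remaining factors (which involve only $\Phi_d(1)$ for proper divisors $d$ of $n$ having at least two distinct prime factors) must multiply to $1$. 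A short induction on the number of distinct prime factors of $n$ then forces $\Phi_n(1) = 1$.

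Finally, I would translate the norm computation back into the arithmetic of $\mathbb{Z}[\xi]$. In the prime-power case, $|N(\xi - 1)| = p$, so the quotient ring $\mathbb{Z}[\xi]/(\xi - 1)$ has prime order $p$; any commutative ring of prime order must be the field $\mathbb{F}_p$, so $(\xi - 1)$ is maximal and hence prime. In the opposite case, $|N(\xi - 1)| = 1$ forces $\xi - 1$ to be a unit. The only conceptually delicate point is the inductive evaluation of $\Phi_n(1)$; once that is in hand, the passage from norms to primality is routine because $\mathbb{Z}[\xi]$ is the full ring of integers.
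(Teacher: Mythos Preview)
Your argument is correct. It takes a genuinely different route from the paper's, which proceeds locally rather than through a global norm computation: when $n=p^t$, the paper notes that $\Phi_n(x+1)$ is Eisenstein at $p$, so $\xi-1$ is a uniformizer at the unique prime of $\mathbb{Z}[\xi]$ above $p$; at every other prime (and at \emph{every} prime when $n$ is not a prime power) it argues that $\xi\not\equiv 1$ in the residue field, since otherwise all $n$-th roots of unity would collapse to $1$ there, which is impossible once the residue characteristic fails to account for all of $n$. Your approach is the more elementary and self-contained of the two, requiring only the recursion $n=\prod_{d\mid n,\,d>1}\Phi_d(1)$ and the index interpretation of the norm; one small remark is that the induction is cleanest as strong induction on $n$ rather than on the number of prime factors, since proper divisors of $n$ can have the same number of prime factors as $n$. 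The paper's local argument, on the other hand, delivers the sharper conclusion that $\xi-1$ is actually a \emph{uniformizer} at the prime above $p$, and this finer information is precisely what is invoked later in the paper (for instance in Proposition~\ref{p64} and in the side-branch computations), which is why the local viewpoint is the natural one in context.
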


\begin{proof}
    If $n=p^t$ is a $p$-power, and if $\mathrm{f}_n(x)$ is 
    the $n$-th cyclotomic polynomial, then $\mathrm{f}_n(x+1)$
    is an Eisenstein polynomial at $p$, so the root $\xi-1$ is
    a uniformizer at the only place of $\mathbb{Q}[\xi]$ lying
    over $p$. At any other place, the primitive $n$-th root of
    unity cannot be congruent to $1$, since that would force
    all $n$-th roots of unity to be trivial in the 
    corresponding residue field, but this is not the case. 
    The same argument works at every place
    in case $n$ is not a prime power.
\end{proof}

\begin{Lem}\label{lc3}
    Let $\xi$ be a root of unity of order $n$. Let $m$ be an integer
    that is relatively prime to $n$. Then the ring $\mathbb{Z}[\xi]$ 
    contains a unit that is congruent to $m$ modulo $1-\xi$.
\end{Lem}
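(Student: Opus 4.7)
The plan is to exhibit an explicit unit, namely the cyclotomic unit $u=(1-\xi^m)/(1-\xi)$, and verify both that it lies in $\mathbb{Z}[\xi]^{*}$ and that it reduces to $m$ modulo $1-\xi$. Observe first that if $n$ is not a prime power, then by Lemma \ref{lc2} the element $1-\xi$ is itself a unit, so the quotient $\mathbb{Z}[\xi]/(1-\xi)$ is trivial, every element of $\mathbb{Z}[\xi]$ is congruent to $m$, and the statement holds vacuously. So the only nontrivial case is $n=p^t$ a prime power, where $(1-\xi)$ is the unique maximal ideal over $p$ with residue field $\mathbb{F}_p$.

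Next, I would write
\[
u \;=\; \frac{1-\xi^m}{1-\xi} \;=\; 1+\xi+\xi^2+\cdots+\xi^{m-1}\in\mathbb{Z}[\xi],
\]
which is manifestly an algebraic integer. To see that $u$ is a unit, use the coprimality of $m$ and $n$: there exists $k\in\mathbb{Z}$ with $mk\equiv 1\pmod n$, so $\xi=(\xi^m)^k$, and therefore
\[
u^{-1} \;=\; \frac{1-\xi}{1-\xi^m} \;=\; \frac{1-(\xi^m)^k}{1-\xi^m} \;=\; 1+\xi^m+\xi^{2m}+\cdots+\xi^{(k-1)m}
\]
also lies in $\mathbb{Z}[\xi]$. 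Hence $u\in\mathbb{Z}[\xi]^{*}$.

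Finally, reducing modulo $1-\xi$ we have $\xi\equiv 1$, and hence
\[
u \;=\; 1+\xi+\cdots+\xi^{m-1} \;\equiv\; \underbrace{1+1+\cdots+1}_{m\text{ times}} \;=\; m \pmod{1-\xi},
\]
which completes the proof. There is no real obstacle here; the only subtlety worth highlighting is that the argument works uniformly, but it is \emph{substantive} only in the prime-power case, since otherwise the congruence is empty by Lemma \ref{lc2}.
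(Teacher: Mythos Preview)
Your proof is correct and follows essentially the same approach as the paper: both exhibit the cyclotomic unit $u=(1-\xi^m)/(1-\xi)=1+\xi+\cdots+\xi^{m-1}$, check the congruence $u\equiv m\pmod{1-\xi}$ via $\xi\equiv 1$, and treat the non-prime-power case as vacuous by Lemma~\ref{lc2}. The only minor difference is in verifying that $u$ is a unit: the paper argues that numerator and denominator generate the same prime ideal over $p$, whereas you give the slightly more elementary explicit inverse $u^{-1}=1+\xi^m+\cdots+\xi^{(k-1)m}$ via $mk\equiv 1\pmod n$.
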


\begin{proof}
    If $\xi-1$ is a unit, there is nothing to prove, so we assume that
    $n=p^t$ is a prime power. The unit in question is
    $$\frac{\xi^m-1}{\xi-1}=1+\xi+\cdots+\xi^{m-1}\equiv m
    \ (\mathrm{mod}\ 1-\xi).$$
    It is a unit because both, the numerator and denominator of the
    fraction generate the same ideal, which is the unique maximal ideal 
    lying over $p$. The result follows.
\end{proof}

Now we consider the field 
$K=\mathbb{Q}(\xi+\xi^{-1})$, which is the largest totally real field contained in the
cyclotomic extension. It is the invariant
field of the automorphism taking $\xi$
to $\xi^{-1}$, whence its ring of integers 
is generated by the elements $\xi^k+\xi^{-k}$
for $1\leq k\leq n-1$. Since 
$\xi^k+\xi^{-k}$ is a polynomial in
$\xi+\xi^{-1}$, as follows easily from the iteration $$\xi^{k+1}+\xi^{-k-1}=
(\xi^k+\xi^{-k})(\xi+\xi^{-1})-
(\xi^{k-1}+\xi^{-k+1}),$$
we conclude that the ring of integers in $K$ is 
$\mathbb{Z}(\theta)$, where $\theta=\xi+\xi^{-1}$.

\begin{Lem}\label{lc4}
    Let $\theta=\xi+\xi^{-1}$ as above. Then
    \begin{itemize}
    \item The element $\theta-2$ is a prime 
    precisely when $n$ is a prime power. Otherwise it is a unit.
    \item The element $\theta+2$ is a prime 
    precisely when $n$ is twice a prime power. Otherwise it is a unit.
    \item when $n$ is a power of $2$, then
    $\theta-2$ and $\theta+2$ are associates, i.e.,
    they generate the same principal ideal.
    \end{itemize}
\end{Lem}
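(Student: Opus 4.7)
The plan is to reduce everything to Lemma \ref{lc2} via the factorizations
\[
\rho - 2 = \xi^{-1}(\xi - 1)^2, \qquad \rho + 2 = \xi^{-1}(\xi + 1)^2,
\]
which come from expanding $\rho = \xi + \xi^{-1}$. I first record a \emph{transfer of units}: since the paragraph preceding the lemma shows that $\mathbb{Z}[\rho]=\mathbb{Z}[\xi]^{\sigma}$ for $\sigma:\xi\mapsto\xi^{-1}$, an element of $\mathbb{Z}[\rho]$ is a unit of $\mathbb{Z}[\xi]$ if and only if it is a unit of $\mathbb{Z}[\rho]$, because its inverse, being fixed by $\sigma$, lies automatically in $\mathbb{Z}[\xi]^{\sigma}=\mathbb{Z}[\rho]$.

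For the first bullet, I would compute the absolute norm. Since $\rho-2 \in \mathbb{Q}(\rho)$ and $[\mathbb{Q}(\xi):\mathbb{Q}(\rho)]=2$ (valid for all $n\geq 3$), the tower formula gives $N_{\mathbb{Q}(\xi)/\mathbb{Q}}(\rho-2)=N_{\mathbb{Q}(\rho)/\mathbb{Q}}(\rho-2)^{2}$. The identity above, combined with the classical computation $N_{\mathbb{Q}(\xi)/\mathbb{Q}}(\xi-1)=\pm\Phi_{n}(1)$ implicit in the proof of Lemma \ref{lc2}, yields $N_{\mathbb{Q}(\xi)/\mathbb{Q}}(\rho-2)=\pm\Phi_{n}(1)^{2}$; hence $N_{\mathbb{Q}(\rho)/\mathbb{Q}}(\rho-2)=\pm p$ when $n=p^{k}$ and $\pm 1$ otherwise. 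An element of $\mathbb{Z}[\rho]$ whose absolute norm is $1$ is a unit (its ideal has norm $1$), and one whose absolute norm is a rational prime $p$ generates a prime ideal of residue degree one, hence is itself a prime element. This settles the first bullet.

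For the second bullet, I would run the same argument with $-\xi$ in place of $\xi$, writing $\rho+2=(-\xi)^{-1}\bigl((-\xi)-1\bigr)^{2}$ and noting $\mathbb{Q}(-\xi)=\mathbb{Q}(\xi)$. The bookkeeping step is to compute the order $m$ of $-\xi$: one checks that $m=2n$ when $n$ is odd, $m=n/2$ when $n\equiv 2\pmod 4$, and $m=n$ when $4\mid n$. A short case analysis then shows that $m$ is a prime power precisely when $n=2p^{k}$ for some prime $p$ (the sub-case $p=2$ covering $n=2^{k+1}$, the sub-case $p$ odd being the nontrivial form $n=2p^{k}$). Applying Lemma \ref{lc2} to $-\xi$ and repeating the norm computation yields the claim. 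For the third bullet, when $n=2^{t}$ both conditions hold simultaneously, so $(\rho-2)\mathbb{Z}[\rho]$ and $(\rho+2)\mathbb{Z}[\rho]$ both equal the unique prime $\mathfrak{q}$ above $2$ in $\mathbb{Z}[\rho]$; the two elements therefore generate the same principal ideal, and so differ by a unit of $\mathbb{Z}[\rho]$.

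The main hurdle is the second bullet's bookkeeping: matching ``order of $-\xi$ is a prime power'' with ``$n$ is twice a prime power'' must handle all parity regimes of $n$ consistently. Once that case analysis is complete, everything else follows mechanically from Lemma \ref{lc2} and the tower-formula norm computation.
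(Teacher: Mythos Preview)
Your proof is correct and follows essentially the same route as the paper. Both arguments rest on the factorizations $\rho-2=-(\xi-1)(\xi^{-1}-1)$ and $\rho+2=-(-\xi-1)(-\xi^{-1}-1)$ (equivalent to your $\xi^{-1}(\xi\mp1)^2$ up to the unit $-\xi$), and both reduce the second bullet to Lemma~\ref{lc2} applied to $-\xi$ via the identical case analysis on the order of $-\xi$.

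The only technical difference is that the paper argues locally---observing that $\mathbb{Q}(\xi)/\mathbb{Q}$ is totally ramified at $p$ when $n=p^t$, so the intermediate field $\mathbb{Q}(\rho)$ is as well, and then reading off that $\rho-2$ is a uniformizer there from the factorization---whereas you argue globally via the tower formula $N_{\mathbb{Q}(\xi)/\mathbb{Q}}(\rho-2)=N_{\mathbb{Q}(\rho)/\mathbb{Q}}(\rho-2)^2$ and the value $\Phi_n(1)$. Your norm computation is arguably tidier, since it bypasses any explicit discussion of ramification in the intermediate field; the paper's valuation argument has the small advantage of directly identifying \emph{which} prime $\rho-2$ generates. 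Either way, the content is the same.
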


\begin{proof}
   The first statement follows since 
   $\theta-2=-(\xi-1)(\xi^{-1}-1)$. It is shown in 
   the proof of Lemma \ref{lc2} that, when
   $n$ is a power of a prime $p$, the extension
   $\mathbb{Q}(\xi)/\mathbb{Q}$ is totally
   ramified at $p$, with $\xi-1$ as a generator
   of the only ideal over $p$. Since 
   $\mathbb{Q}(\theta)/\mathbb{Q}$ is an
   intermediate extension, it is also totally
   ramified. Now the relation 
   $\theta-2=-(\xi-1)(\xi^{-1}-1)$ shows that
   $\theta-2$ is a generator of the only prime 
   ideal of $K$ containing $p$, since
   $\mathbb{Q}(\xi)/K$ is a quadratic extension,
   while $\xi^{-1}-1$ is a root of the same 
   irreducible polynomial as $\xi-1$. The second 
   statement is similarly deduced from
   $(\theta+2)=(-\xi-1)(-\xi^{-1}-1)$ if we proved
   that $-\xi$ is a root of unity of prime power 
   order precisely when $n$ is twice a prime 
   power. Now, if $n$ is the order of $\xi$,
   then the order of $-\xi$ is $2n$ whenever $n$
   is odd. This is never a prime power for odd
   $n$ unless $n$ is one. If $n$ is divisible by 
   $4$, then the order of $-\xi$ is $n$, but now
   $n$ can be a prime power only if it is
   a power of $2$. In the remaining case, 
   $n$ is twice an odd number and the order
   of $-\xi$ is $n/2$, so $n/2$ must be a prime
   power. For the final statement, when $n\geq4$ is
   a power of $2$, both $\theta-2$ and $\theta+2$
   are generators of the unique prime ideal containing $2$.
\end{proof}

\section{Decomposable representations}\label{S6}

In this section we consider a decomposable representation 
$\rho$, which we assume to be of the form 
$\rho=\rho_\chi=\sbmattrix 100{\chi}$,
as in \S\ref{MR}, since the study of all
decomposable representations can be reduced to this
case. The representation $\rho_\chi$
factors through a cyclic group, so we might assume throughout
this section that $G=\langle g_0\rangle$ is cyclic.
It suffices, therefore to study the set of maximal orders
containing $\mathtt{r}_\chi=\rho_\chi(g_0)=
\sbmattrix 100{\eta}$,
where $\eta=\chi(g_0)$.

\begin{Lem}\label{l61}
    Let $\eta \in \mathcal{O}_\nu^* $, for a place $\nu$ 
    of $K$. The action of the matrix 
    $\mathtt{r}=\begin{pmatrix} 1&0\\0&\eta \end{pmatrix}$ 
    on the Bruhat-Tits tree at $\nu$
    fixes precisely the vertices that are at distance at most 
    $\nu(1-\eta )$ of the standard apartment
    $\mathfrak{a}_s$.
\end{Lem}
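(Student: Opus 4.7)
The plan is to use the ball description of the action on the tree to translate the fixed-point condition into a valuation inequality, and then interpret that inequality as a distance bound relative to $\mathfrak{a}_s$. First I would observe that, since $\chi_0$ is a root of unity and thus a unit at $\nu$, the Moebius transformation $\gamma(z)=z/\chi_0$ associated to $\mathtt{r}$ satisfies $\gamma(B_a^{[n]})=B_{\chi_0^{-1}a}^{[n]}$. By the description in \ref{ballenc} and the identification rule $v_a^{[n]}=v_b^{[n]}\iff\nu(b-a)\geq n$, the equality $\mathtt{r}\ast v_a^{[n]}=v_a^{[n]}$ is equivalent to $\nu\bigl((\chi_0^{-1}-1)a\bigr)\geq n$, and since $\nu(\chi_0^{-1}-1)=\nu(1-\chi_0)$, this reduces to
\[\nu(a)\geq n-\nu(1-\chi_0).\]

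Next I would compute $d(v_a^{[n]},\mathfrak{a}_s)$. The standard apartment consists of the vertices $v_0^{[m]}$ with $m\in\mathbb{Z}$, and the chain of parents of $v_a^{[n]}$ is the sequence $\{v_a^{[n-k]}\}_{k\geq 0}$. Such a parent lies on $\mathfrak{a}_s$, i.e.\ coincides with $v_0^{[n-k]}$, exactly when $n-k\leq\nu(a)$. Because $\mathfrak{a}_s$ is a geodesic line in the tree and projections onto convex subtrees are unique, the unique shortest path from $v_a^{[n]}$ to $\mathfrak{a}_s$ follows this chain of parents, giving $d(v_a^{[n]},\mathfrak{a}_s)=\max(0,n-\nu(a))$.

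Combining the two paragraphs yields the statement: the vertex $v_a^{[n]}$ is $\mathtt{r}$-fixed if and only if $n-\nu(a)\leq\nu(1-\chi_0)$, i.e.\ $d(v_a^{[n]},\mathfrak{a}_s)\leq\nu(1-\chi_0)$. As a sanity check, every vertex of $\mathfrak{a}_s$ (the case $\nu(a)\geq n$, including $a=0$) is automatically fixed, consistent with the fact that $\gamma$ fixes the visual limits $0$ and $\infty$.

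I do not anticipate a serious obstacle. The one step worth stating carefully is the claim that walking upward from $v_a^{[n]}$ produces the nearest apartment vertex: any other path to $\mathfrak{a}_s$ would leave the chain of parents and then return to it, forcing a cycle, which is impossible in a tree. All remaining work is direct manipulation of valuations using the standing convention $\nu(0)=\infty$.
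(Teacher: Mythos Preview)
Your proof is correct and follows essentially the same line as the paper's: both identify the fixed-point condition with the inequality $\nu(a-a/\chi_0)\geq n$ and compute the distance from $v_a^{[n]}$ to $\mathfrak{a}_s$ as $\max(0,n-\nu(a))$. The only cosmetic difference is that you invoke the ball-image description of the action, whereas the paper uses the incenter criterion \ref{sic}; the two are equivalent and lead to the same computation.
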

\begin{proof}
    Recall that $\mathfrak{a}_s$ is the maximal
    path from $0$ to $\infty$,
    and its vertices correspond to balls containing $0$, i.e.,
    $v=v_a^{[n]}$ is in the standard apartment precisely 
    when $n\leq\nu(a)$.
    Consider a vertex $v=v_a^{[n]}$ of the 
    Bruhat-Tits tree that is 
    at a distance $m\geq0$ from the 
    standard apartment, and corresponds
    to a Ball $B_a^{[n]}$. The closest point to $v$
    in the standard apartment is $v_a^{[\nu(a)]}$, whence
    $\nu(a)=n-m$. A triplet of visual limits having $v$
    as incenter is $\left\lbrace a, a+\pi^n, 
    \infty\right\rbrace$. 
    The Moebius transformation associated with the matrix $A$ 
    sends this triplet to the set $\left\lbrace \frac{a}
    {\eta },\frac{a+\pi^n}{\eta },\infty\right\rbrace$.
    According to \ref{sic}, the vertex $v$ 
    is invariant if and only 
    if the later set also has $v$ as incenter.
      This will happen if and only if 
      the following conditions are 
      satisfied: $\frac{a}{\eta }, 
      \frac{a+\pi^n}{\eta }\in B_a^{[n]}$ 
      and $\nu\left(\frac{a}{\eta }-
      \frac{a+\pi^n}{\eta }\right)=n$. 
      The last 
      condition is clearly satisfied. For the 
      other property, note 
      that $\nu\left(a-\frac{a}{\eta }\right)=
      n-m+\nu(\eta -1)$, so we have 
      that $\frac{a}{\eta }\in B_a^{[n]}$ if and only if 
      $v(\eta -1)\geq m$. Finally, 
      $\frac{a}{\eta }\in B_a^{[n]}$ implies
      $B_{a/\eta }^{[n]}=B_a^{[n]}$, whence
      $\nu\left(\frac{a}{\eta }-
      \frac{a+\pi^n}{\eta }\right)=n$ implies
      $\frac{a+\pi^n}{\eta }\in B_a^{[n]}$. The result follows.
\end{proof}

Since the normalizer in $\mathrm{GL}_2(K_\nu)$ 
of a local maximal order
$\mathfrak{D}_\nu$ is $K_\nu^*\mathfrak{D}_\nu^*$, 
and the determinant
of the matrix $\mathtt{r}$ defined above is a unit, 
next result follows:

\begin{Cor}\label{cl61}
    Let $\eta \in \mathcal{O}_K^* $. The maximal order containing the matrix
    $\mathtt{r}=\begin{pmatrix} 1&0\\0&\eta \end{pmatrix}$,
    or the order it generates, are those corresponding to
    vertices whose distance to the standard apartment is no larger
    than $\nu(1-\eta )$.\qed
\end{Cor}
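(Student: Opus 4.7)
The plan is to deduce the corollary from Lemma \ref{l61} by identifying, for a maximal order $\mathfrak{D}_\nu$, the condition ``$\mathfrak{D}_\nu$ contains $\mathtt{r}$'' with the condition ``$v_{\mathfrak{D}_\nu}$ is fixed by the action of $\mathtt{r}$'' on the tree. Since the action on vertices is given by conjugation ($\mathtt{g}*v_{\mathfrak{D}}=v_{\mathtt{g}\mathfrak{D}\mathtt{g}^{-1}}$), stabilizing $v_{\mathfrak{D}_\nu}$ is equivalent to lying in the normalizer $N_{\mathfrak{D}_\nu}=K_\nu^*\mathfrak{D}_\nu^*$.

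First I would handle the easy direction: if $\mathtt{r}\in\mathfrak{D}_\nu$, then because $\det(\mathtt{r})=\chi_0\in\mathcal{O}_\nu^*$, the matrix $\mathtt{r}$ lies in $\mathfrak{D}_\nu^*$, hence in $N_{\mathfrak{D}_\nu}$, so it fixes the vertex $v_{\mathfrak{D}_\nu}$. For the converse, suppose $\mathtt{r}$ fixes $v_{\mathfrak{D}_\nu}$. Then $\mathtt{r}=\lambda u$ for some $\lambda\in K_\nu^*$ and $u\in\mathfrak{D}_\nu^*$. Taking determinants gives $\chi_0=\lambda^2\det(u)$; since both $\chi_0$ and $\det(u)$ are units in $\mathcal{O}_\nu$, we get $\nu(\lambda)=0$, i.e. $\lambda\in\mathcal{O}_\nu^*\subseteq\mathfrak{D}_\nu$. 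Thus $\mathtt{r}=\lambda u\in\mathfrak{D}_\nu$.

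Combining these two implications identifies the set of maximal orders containing $\mathtt{r}$ with the set of vertices fixed by $\mathtt{r}$, and Lemma \ref{l61} describes the latter as the tubular neighborhood of radius $\nu(1-\chi_0)$ around $\mathfrak{a}_s$. Since the $\mathcal{O}_\nu$-order generated by $\mathtt{r}$ is just $\mathcal{O}_\nu[\mathtt{r}]$, and a maximal order contains $\mathcal{O}_\nu[\mathtt{r}]$ if and only if it contains $\mathtt{r}$ (as it always contains $\mathcal{O}_\nu$), the final equivalence in the statement is automatic.

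There is essentially no hard step here; the only subtle point is the determinant computation that forces $\lambda$ to be a unit, which relies squarely on the hypothesis $\chi_0\in\mathcal{O}_K^*$ and would fail without it. The argument reduces the geometric statement on fixed vertices of Lemma \ref{l61} to the algebraic statement about containment by recalling the explicit form of the normalizer of a local maximal order.
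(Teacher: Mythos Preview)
Your proof is correct and follows exactly the approach the paper indicates in the sentence preceding the corollary: it uses that the normalizer of a local maximal order is $K_\nu^*\mathfrak{D}_\nu^*$ together with the fact that $\det(\mathtt{r})=\chi_0$ is a unit, reducing the containment condition to the fixed-vertex condition of Lemma~\ref{l61}. You have simply made the determinant argument explicit where the paper leaves it implicit.
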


\begin{Lem}\label{l62}
    Let $T_0\subseteq\mathrm{GL}_2(K)$ be the group of diagonal matrices.
    If the order of the representation $\chi$ is not a prime power,
    then the number of $T_0$-orbits of maximal orders containing 
    $\rho_\chi(G)$ is the class number $h_K$. The number of such 
    orbits containing maximal orders isomorphic to
    $\mathbb{M}_2(\mathcal{O}_K)$ is $h_K/h_K(2)$, where
    $h_K(2)=|\mathcal{G}_K(2)|$.
\end{Lem}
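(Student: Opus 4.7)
The plan is to reduce the counting to a description of those lattices that split along the coordinate axes, and then extract both counts from the class group. Since $G$ is cyclic, the maximal orders containing $\rho_\chi(G)$ coincide with those containing $\mathtt{r}_\chi$. By Corollary~\ref{cl61}, locally at a finite place $\nu$ these are the vertices of $\mathfrak{t}_\nu$ within distance $\nu(1-\chi_0)$ of the standard apartment. Under the non-prime-power hypothesis, Lemma~\ref{lc2} gives that $1-\chi_0$ is a unit in $\mathbb{Z}[\chi_0]\subseteq\mathcal{O}_K$, so $\nu(1-\chi_0)=0$ at every finite $\nu$. The admissible orders are therefore exactly those whose completion is a stem vertex of the local standard apartment at every place.

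I would then globalize. Since the vertex $v_0^{[n_\nu]}$ corresponds to the homothety class of $\pi_\nu^{n_\nu}\mathcal{O}_\nu e_1+\mathcal{O}_\nu e_2$, each admissible $\Lambda$ has completions of the form $I_\nu e_1 + J_\nu e_2$, and the \textbf{LGPL} of Section~2 produces fractional ideals $I,J$ of $\mathcal{O}_K$ with $\Lambda = Ie_1+Je_2$. A direct matrix computation yields $\mathrm{End}(Ie_1+Je_2) = \mathfrak{D}(H)$, where $\mathfrak{D}(H) = \sbmattrix{\mathcal{O}_K}{H}{H^{-1}}{\mathcal{O}_K}$ and $H = IJ^{-1}$; every fractional ideal $H$ is achieved. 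A further direct calculation shows that conjugation by $\sbmattrix{\alpha}{0}{0}{\beta}\in T_0$ sends $\mathfrak{D}(H)$ to $\mathfrak{D}(\alpha\beta^{-1}H)$, so the $T_0$-orbits are in bijection with the ideal classes $[H]\in\mathcal{G}_K$, giving $h_K$ orbits.

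For the second count, I would apply Skolem-Noether: $\mathfrak{D}(H)\cong\mathbb{M}_2(\mathcal{O}_K)$ as rings iff the two orders are $\mathrm{GL}_2(K)$-conjugate. Writing $\mathfrak{D}(H) = \mathrm{End}(\mathcal{O}_K e_1 + H e_2)$ and using that two lattices sharing the same maximal endomorphism ring differ by multiplication by a fractional ideal, the condition becomes $\mathcal{O}_K\oplus H \cong I\oplus I$ as $\mathcal{O}_K$-modules for some fractional ideal $I$. Matching Steinitz invariants via Lemma~\ref{lap3} forces $[H] = [I]^2$, so the admissible classes form exactly the subgroup of squares $\mathcal{G}_K^2\subseteq\mathcal{G}_K$. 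Since $|\mathcal{G}_K/\mathcal{G}_K^2| = |\mathcal{G}_K(2)| = h_K(2)$ by comparing kernel and cokernel sizes of the squaring endomorphism of a finite abelian group, the count is $|\mathcal{G}_K^2| = h_K/h_K(2)$.

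The main obstacle is this last step, where an abstract ring isomorphism between maximal orders must be translated into a class-group condition. It chains Skolem-Noether (to realize the isomorphism as an inner conjugation in $\mathrm{GL}_2(K)$), the rigidity of the lattice-to-endomorphism-ring correspondence (lattices with the same maximal endomorphism ring differ by a fractional ideal), and Steinitz's classification from Lemma~\ref{lap3}, before recognizing that the resulting condition is exactly divisibility of $[H]$ by a square in $\mathcal{G}_K$. The other parts, by contrast, follow essentially formally once the reduction to the standard apartment is in place.
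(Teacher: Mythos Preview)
Your argument is correct and follows essentially the same route as the paper: reduce to the standard apartment via the unit condition on $1-\chi_0$, parametrize stem orders by the class of the ideal ratio $IJ^{-1}$ (the paper uses $I'I^{-1}$), and identify the $T_0$-orbits with $\mathcal{G}_K$. For the second count the paper is slightly more direct---it simply asks when a representative lattice $I\times I'$ can be chosen \emph{free}, which by Lemma~\ref{lap3} happens exactly when $II'$ is principal, immediately forcing the invariant to be a square---whereas you route the same conclusion through Skolem--Noether and the lattice-to-order rigidity; both arrive at $\mathcal{G}_K^2$ and the count $h_K/h_K(2)$.
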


\begin{proof}
    If the order of the representation $\chi$ is not a prime power,
    then the element $1-\eta $ is a unit in $\mathbb{Z}[\eta ]$, 
    and therefore also in $\mathcal{O}_K$.
    This means that the branch of the matrix $\mathtt{r}$ in
    Cor. \ref{cl61} coincides with the standard apartment,
    since $\nu(1-\eta )=0$. 
    In particular, the idempotent $\mathtt{n}=\sbmattrix1000$ 
    is contained in every local maximal order containing $\rho_\chi(G)$.
    Then every global maximal order $\mathfrak{D}$ containing 
    $\rho_\chi(G)$ satisfies $\mathtt{n}\in\mathfrak{D}_\nu$, 
    locally at every place $\nu$, and therefore also
    $\mathtt{n}\in\mathfrak{D}$. We conclude that $\mathfrak{D}=\mathfrak{D}_\Lambda$ for a lattice $\Lambda$
    that is $\mathtt{n}$-invariant, whence it follows that 
    $\Lambda=\sbvecttor{I}{I'}$ for two fractional ideals $I$ and $I'$. 
    Two such lattices, $\sbvecttor{I}{I'}$ and $\sbvecttor{J}{J'}$ 
    correspond to the same maximal order if there is a
    fractional ideal $\tilde{I}$ satisfying $J=\tilde{I}I$
    and $J'=\tilde{I}I'$. In particular $J'J^{-1}=I'I^{-1}$ is an
    invariant that completely determines the maximal order. Now 
    the result follows if we note that the group of diagonal 
    matrices act on this invariant by scalar multiplication.
    Specifically, if $\mathtt{a}=\sbmattrix a00b$, then 
    $\mathtt{a}\sbvecttor{I}{I'}$ has the invariant $ba^{-1}I'I^{-1}$.
    
    For the last statement, we need to find those invariants
    $I'I^{-1}$ for which the representative $\sbvecttor{I}{I'}$ can be 
    chosen free. The latter condition is equivalent to
    $II'$ being principal. If $II'=b\mathcal{O}_K$, then
    $I^{-1}=b^{-1}I'$, so $I'I^{-1}=b^{-1}(I')^2$.
    We conclude that the invariant corresponding to orders
    isomorphic to $\mathbb{M}_2(\mathcal{O}_K)$ are those
    that are squares in the class group. In particular, 
    the number of possible invariants  is the order
    of the image of the map $f:\mathcal{G}_K\rightarrow\mathcal{G}_K$
    defined by $f[J]=[J^2]$. This is equal
    to $h_K/h_K(2)$, since $h_K(2)$ is the order of the kernel.
\end{proof}

\begin{Prop}\label{p63}
    If the order of the representation $\chi$ is not a 
    prime power, then the number of 
    $\mathrm{GL}_2(\mathcal{O}_K)$-conjugacy classes of 
    $\mathcal{O}_K$-representations of $G$ in the
    $\mathrm{GL}_2(K)$-conjugacy class of $\rho_\chi$ is 
    the class number $h_K$. 
\end{Prop}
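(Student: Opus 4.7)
The plan is to apply Proposition~\ref{percc} and Lemma~\ref{Lemcc} to the representation $\psi_0 = \rho_\chi$, using Lemma~\ref{l62} to count the orbit set $\Omega$. Since the order $n>1$ of $\chi$ forces $\chi_0\neq 1$, the matrix $\rho_\chi(g_0) = \mathrm{diag}(1,\chi_0)$ has two distinct eigenvalues in $K$, so its centralizer in $\mathbb{M}_2(K)$ is exactly the diagonal subalgebra and $\mathfrak{L}^* = T_0$. Combining the two results, one obtains
\begin{equation*}
|\Psi| \;=\; \sum_{[\mathfrak{D}]\in\Omega} \bigl|\mathcal{G}_K(2)/\Theta_\mathfrak{D}\bigr|,
\end{equation*}
where $\Omega$ is the set of $T_0$-orbits of maximal orders isomorphic to $\mathbb{M}_2(\mathcal{O}_K)$ containing $\rho_\chi(G)$. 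By the second half of Lemma~\ref{l62}, $|\Omega|=h_K/h_K(2)$, so it will suffice to prove that $\Theta_\mathfrak{D}$ is the trivial subgroup of $\mathcal{G}_K(2)$ for every such $\mathfrak{D}$; this would give $|\Psi| = (h_K/h_K(2))\cdot h_K(2) = h_K$.

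Establishing triviality of $\Theta_\mathfrak{D}$ is the main computation. Writing $\mathfrak{D} = \mathfrak{D}_{I\times I'}$ for suitable fractional ideals $I, I'$, a diagonal matrix $c = \mathrm{diag}(u,v)$ lies in $T_0 \cap N_\mathfrak{D}$ precisely when there is a fractional ideal $J$ with $uI = JI$ and $vI' = JI'$; this forces $(u) = (v) = J$, so $J$ must itself be principal and $(\det c) = (uv) = J^2$ is the square of a principal ideal. Conversely, every square of a principal ideal is realized in this way. Therefore $\bar{J} \in \mathcal{G}_K(2)$ lies in $\Theta_\mathfrak{D}$ if and only if $J^2 = M^2$ as fractional ideals for some principal ideal $M$.

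The final step is where the one genuine subtlety lies: the group of fractional ideals of the Dedekind domain $\mathcal{O}_K$ is a free abelian group on the set of nonzero prime ideals, hence torsion-free. Consequently $J^2 = M^2$ implies $J = M$, so $J$ is principal and $\bar{J}$ is trivial in $\mathcal{G}_K(2)$. This shows $\Theta_\mathfrak{D} = \{e\}$ for every $\mathfrak{D}$, completing the count. Everything else is a direct translation between the explicit normalizer calculation in the lattice picture and the orbit-counting machinery of Section~4; the only step that has real content beyond bookkeeping is the appeal to torsion-freeness of the ideal group to pass from an equality of squares $J^2 = M^2$ to an equality of the ideals themselves.
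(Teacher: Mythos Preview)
Your proof is correct and follows essentially the same route as the paper's: invoke Lemma~\ref{l62} for the count $|\Omega|=h_K/h_K(2)$, then show $\Theta_{\mathfrak{D}}$ is trivial for every relevant $\mathfrak{D}$ by checking that any diagonal $c=\mathrm{diag}(u,v)$ in $N_{\mathfrak{D}}$ must satisfy $(u)=(v)$, so that $(\det c)$ is the square of a principal ideal. The paper compresses the last inference (``the determinant is a square, whence every class in $\Theta_{\mathfrak{D}}$ is trivial'') into one line; your explicit appeal to torsion-freeness of the ideal group to pass from $J^2=M^2$ to $J=M$ is exactly the content hidden in that sentence.
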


\begin{proof}
    According to Lemma \ref{l62}, it suffices to prove that each maximal
    order containing  $\rho_\chi(G)$ corresponds to
    $h_K(2)$ conjugacy classes of representations.
    In other words, it suffices to prove that the order of
    $\tilde{N}_a$, as defined in (\ref{defna}), is precisely
    $h_K(2)=|\tilde{N}|$, i.e., we need to prove that
    $\Theta_{\mathfrak{D}}$, as defined in Lemma \ref{Lemcc},
    is trivial. In fact, $N_{\mathfrak{D}}\cap L^*$ contains precisely the 
    diagonal matrices $\mathtt{a}=\sbmattrix a00b$ for which $ab^{-1}$
    is a unit. For such elements the ideal generated by the determinant is a square,
    whence every class in $\Theta_{\mathfrak{D}}$ is the class of principal
    ideals, and therefore is trivial. The result follows.
\end{proof}

In the preceding cases, the ring $\mathcal{O}_\nu[\mathtt{r}]$
is the full ring of integral diagonal matrices, so the branch 
$\mathfrak{s}(\mathtt{r})$, at $\nu$, coincide with
the standard apartment $\mathfrak{a}_s$. In general, the ring
$\mathcal{O}_K[\mathtt{r}]$ might be smaller at a finite
set of places $\nu$. In this section, we refer to the orders 
$\mathfrak{D}$ corresponding to vertices in $\mathfrak{a}_s$ as
the stem orders, since $\mathfrak{a}_s$ is the stem of the branch of $\mathtt{r}$. In particular, an order is a stem order at all
but finitely many places. The anchor order of any local order $\mathfrak{D}_\nu$, corresponding to a vertex $v$ is defined 
as the stem order $\mathfrak{D}'_\nu$ corresponding to the vertex
$v'$ of $\mathfrak{a}_s$ that is closest to $v$. This definition
extends naturally to global orders. The anchor order of a
global order $\mathfrak{D}$ is the global order $\mathfrak{D}'$
for which $\mathfrak{D}'_\nu$ is the anchor order of
$\mathfrak{D}_\nu$ locally at all places. We assume
$\mathfrak{D}_\nu=\mathfrak{D}'_\nu$ if $\mathfrak{D}_\nu$
is already a stem order.

\begin{Prop}\label{p64}
    Assume $K=\mathbb{Q}[\eta ]$, where $\chi$ and $\eta $
    are as above.
    If the order of the representation $\chi$ is a prime power,
    then the number of conjugacy classes of 
    $\mathcal{O}_K$-representations of $G$ in the
    $K$-conjugacy class of $\rho_\chi$ is $2h_K$. 
\end{Prop}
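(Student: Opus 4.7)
The plan is to adapt the strategy of Proposition \ref{p63}, accounting for the extra maximal orders that appear at the place above $p$ in the prime power case, which I will show contribute the missing factor of two.

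First, I would describe the branch. Since $K=\mathbb{Q}[\chi_0]$ and $\chi_0$ is a primitive $n$-th root of unity with $n$ a prime power $p^t$, Lemma \ref{lc2} says $\chi_0-1$ is a uniformizer at the unique place $\nu_0$ of $K$ above $p$ and a unit at every other finite place. Corollary \ref{cl61} then shows that $\mathfrak{s}(\rho_\chi)$ is the standard apartment $\mathfrak{a}_s$ at every $\nu\neq\nu_0$ and its $1$-tubular neighborhood at $\nu_0$; every maximal order in the branch is therefore either a global stem order or a foliage order, the latter having a vertex at distance one from $\mathfrak{a}_s$ at $\nu_0$ and stem vertices elsewhere. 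Stem orbits under $T_0=\mathfrak{L}^*$ are already handled by Lemma \ref{l62}: there are $h_K$ of them, of which $h_K/h_K(2)$ contain free orders.

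The heart of the proof is the analysis of the foliage contribution. A foliage order is determined by its anchor stem invariant $\mathfrak{a}\in\mathcal{I}_K$ together with a foliage direction $u\in\mathbb{F}_p^*$ at $\nu_0$; the action of $T_0/\mathrm{center}\cong K^*$ multiplies $\mathfrak{a}$ by the principal ideal $(c)$ and $u$ by the image $\bar c\in\mathbb{F}_p^*=(\mathcal{O}_K/P[\nu_0])^*$. The arithmetic input is Lemma \ref{lc3}, which gives that $\mathcal{O}_K^*\to\mathbb{F}_p^*$ is surjective, so inside each fibre over $[\mathfrak{a}]\in\mathcal{G}_K$ the stabilizer acts transitively on directions, and there are exactly $h_K$ foliage orbits. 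To determine which are free I compute $\bigwedge^2\Lambda$: locally at $\nu_0$ the foliage lattice $\Lambda_c^{[m+1]}$ has $\bigwedge^2=\pi_{\nu_0}^{m+1}\mathcal{O}_{\nu_0}$, so globally $\bigwedge^2\Lambda=I^\bullet J^\bullet\cdot P[\nu_0]$, where $I^\bullet,J^\bullet$ are the coordinate projections of $\Lambda$. Lemma \ref{lc2} also gives $P[\nu_0]=(\chi_0-1)$ is principal, so the freeness criterion of Lemma \ref{lap3} reduces, exactly as in the stem case, to $[\mathfrak{a}]$ being a square in $\mathcal{G}_K$, giving $h_K/h_K(2)$ free foliage orbits.

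Finally, I show that $\Theta_{\mathfrak{D}}$ is trivial for both types, so that Lemma \ref{Lemcc} gives $|\tilde N_\mathtt{a}|=h_K(2)$ in every orbit and Proposition \ref{percc} yields the total $(h_K/h_K(2)+h_K/h_K(2))\cdot h_K(2)=2h_K$. The stem case is the proof of Proposition \ref{p63}. For a foliage order $\mathfrak{D}=\mathrm{End}(\Lambda)$ I expand the normalizer condition $\sbmattrix a00b\Lambda=\tilde I\Lambda$: at every $\nu\neq\nu_0$ it forces $\nu(a)=\nu(b)=\nu(\tilde I)$, while at $\nu_0$ preservation of the anchor level forces $\nu_0(a)=\nu_0(b)=\nu_0(\tilde I)$ and preservation of the foliage direction forces $a/b\equiv1\pmod{P[\nu_0]}$. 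Hence $a/b\in\mathcal{O}_K^*$, so $(\det c)=(ab)=(b)^2$ is a principal square and $J^2=(b)^2$ forces $J=(b)$ by unique factorization of ideals in $\mathcal{O}_K$. I expect the main obstacle to be precisely this normalizer analysis: the residual constraint at $\nu_0$ is what keeps $\mathfrak{L}^*\cap N_{\mathfrak{D}}$ small enough to make $\Theta_{\mathfrak{D}}$ trivial and so preserve the factor of $h_K(2)$ conjugacy classes per orbit.
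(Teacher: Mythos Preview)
Your proposal is correct and follows essentially the same route as the paper: you identify the branch as the $1$-tubular neighborhood of $\mathfrak{a}_s$ at the unique place over $p$, use Lemma~\ref{lc3} to get transitivity of the unit action on foliage directions over each anchor, use principality of $P[\nu_0]$ to match the freeness criterion for foliage and stem orders, and check $\Theta_{\mathfrak{D}}$ is trivial in both cases. The only cosmetic differences are that the paper phrases the freeness of leaf orders via the Artin distance rather than your $\bigwedge^2$ computation, and it shortcuts the foliage normalizer analysis by observing that any diagonal element fixing a leaf must fix its anchor, immediately reducing to the stem case without your residual congruence condition.
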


\begin{proof}
    In this case the element $\tilde{p}=1-\eta $ is a prime 
    lying over
    a prime $p$ of $\mathbb{Z}$, as shown in Lemma
    \ref{lc2}. In particular,
    the ideal $\mathbf{p}_1=(\tilde{p})$ is principal, whence $\nu_{\mathbf{p}_1}$-variants have trivial Artin distance. At every other place $\nu'$ of $K$,
    the maximal orders containing $\rho_\chi(G)$ are in the path
    from $0$ to $\infty$, but at $\nu_{\mathbf{p}_1}$ 
    they are either on the path or
    at distance $1$ from it, since the valuation of
    $1-\eta $ is $1$, see Fig. \ref{F4}.
    We therefore classify these maximal orders $\mathfrak{D}$ 
    into stem or leaf (non-stem) orders, according to the position of the vertex
    corresponding to
    the completion $\mathfrak{D}_{\mathbf{p}_1}$ in the Bruhat-Tits
    tree at $\nu_{\mathbf{p}_1}$. Leaf orders correspond to vertices at distance $1$ from $\mathfrak{a}_s$.
    \begin{figure}
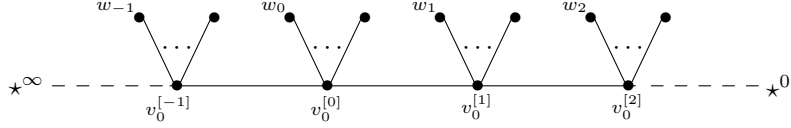

\[
\xygraph{!{<0cm,0cm>;<1cm,0cm>:<0cm,1cm>::}
!{(8,0) }*+{\star^{0}}="vl2"
!{(-2,0) }*+{\star^{\infty}}="vli"
!{(0,0) }*+{\bullet}="v0i"
!{(2,0) }*+{\bullet}="v4i"
!{(4,0) }*+{\bullet}="v5i"
!{(6,0) }*+{\bullet}="v6i"
!{(-.2,0) }*+{}="v0"
!{(6.2,0) }*+{}="v7i"
!{(-.1,-0.3) }*+{{}_{v_0^{[-1]}}}
!{(2,-0.3) }*+{{}_{v_0^{[0]}}}
!{(4,-0.3) }*+{{}^{v_0^{[1]}}}
!{(6,-0.3) }*+{{}^{v_0^{[2]}}}
!{(6.2,0) }*+{}="v7"
!{(0,-.05) }*+{}="a0" !{(-0.5,1) }*+{}="p10" !{(0.5,1) }*+{}="p20"
!{(-0.5,.9)}*+{\bullet} !{(0.5,.9) }*+{\bullet} !{(-0.8,1)}*+{{}^{w_{-1}}}
!{(0,.5)}*+{\cdots}
!{(2,-.05) }*+{}="a2" !{(1.5,1) }*+{}="p12" !{(2.5,1) }*+{}="p22"
!{(1.5,.9)}*+{\bullet} !{(2.5,.9) }*+{\bullet} !{(1.3,1)}*+{{}^{w_0}}
!{(2,.5)}*+{\cdots}
!{(4,-.05) }*+{}="a4" !{(3.5,1) }*+{}="p14" !{(4.5,1) }*+{}="p24"
!{(3.5,.9)}*+{\bullet} !{(4.5,.9) }*+{\bullet} !{(3.3,1)}*+{{}^{w_1}}
!{(4,.5)}*+{\cdots}
!{(6,-.05) }*+{}="a6" !{(5.5,1) }*+{}="p16" !{(6.5,1) }*+{}="p26"
!{(5.5,.9)}*+{\bullet} !{(6.5,.9) }*+{\bullet} !{(5.3,1)}*+{{}^{w_2}}
!{(6,.5)}*+{\cdots}
"v0"-"v7" "vli"-@{--}"v0i" "vl2"-@{--}"v7i"  
"a0"-"p10" "a0"-"p20" "a2"-"p12" "a2"-"p22"
"a4"-"p14" "a4"-"p24" "a6"-"p16" "a6"-"p26"
}
\]
\caption{Local vertices containing the representation $\rho_\chi$, 
when the order of $\chi$ is a prime power. The sample leaves are 
$w_i=v_{p_1^i}^{[i+1]}$.}\label{F4}
\end{figure}
    Stem orders belong to $h_K/h_K(2)$ different $H$-orbits, each
    corresponding to $h_K(2)$ homothety classes of lattices, 
    precisely as in the proof of the preceding proposition.
    We concentrate, therefore, on the leaf orders. 

    Diagonal matrices whose diagonal coefficients generate the same
    ideal leave every stem order invariant,  while permuting the
    leaf orders with the same anchor order. More precisely, a diagonal 
    matrix $\sbmattrix 100a$ acts on the Bruhat-Tits tree as the Moebius 
    transformation $\tau(z)=a^{-1}z$. This defines an action of
    units on leaf orders that we use in the sequel. Every
    vertex in $\mathfrak{a}_s$ has $p-1$ neighbors outside 
    $\mathfrak{a}_s$, since $K_{\tilde{p}}/\mathbb{Q}_p$ is totally
    ramified. The group $(\mathbb{Z}/p\mathbb{Z})^*$ acts on
    these neighbors without fix points, and therefore transitively,
    because of Lemma \ref{l61}.  Lemma \ref{lc3} shows now
    that global units act transitively on the leaf orders
    with a fix anchor order. Furthermore, a leaf order
    is conjugate to $\mathbb{M}_2(\mathcal{O}_K)$ if and
    only if its anchor order is, since they are $\nu_{\mathbf{p}_1}$-variants, whence their
    Artin distance is trivial. It suffices now to 
    compute the order of $\tilde{N}_a$ for one of these 
    leaf orders. In fact, every diagonal element leaving a
    leaf order invariant must also leave invariant the 
    corresponding anchor order. The proof that 
    $\Theta_{\mathfrak{D}}$ is trivial goes, therefore, as before.
    The result follows.   
\end{proof}

To generalize the previous result to an arbitrary extension,
we need some additional notation. Let $K$ be an arbitrary field containing
$\mathbb{Q}[\eta ]$. Let $\tilde{p}\mathcal{O}_K=
P_1^{e_1}\cdots P_r^{e_r}$ be the prime decomposition of
the ideal  $\tilde{p}\mathcal{O}_K\subseteq\mathcal{O}_K$. Then,
it follows from the identity $\tilde{p}=1-\eta $ and
Cor. \ref{cl61} that the maximal orders $\mathfrak{D}$ containing 
$\mathtt{r}=\mathtt{r}_\chi$ 
are those satisfying both of the following conditions:
\begin{itemize}
\item The vertex $v_{\mathfrak{D},Q}$, corresponding to $\mathfrak{D}_Q$, 
belongs to $\mathfrak{a}_s$ at every place $Q\notin\{P_1,\dots,P_r\}$,
\item the distance from the vertex $v_{\mathfrak{D},P_i}$ to
$\mathfrak{a}_s$ is at most $e_i$, for every $i\in\{1,\dots,r\}$.
\end{itemize}
Locally, the set of vertices corresponding to maximal orders containing $\mathtt{r}$ 
with the same anchor order is a side branch, as described at the
end of the introduction.
The side branch is non-trivial only for the places $\nu_{P_1},\dots,\nu_{P_r}$. At each of
these places $\nu_{P_i}$, the side 
branch contains precisely the vertices at a distance $e_i$ or less from
the stem. Globally, therefore,
the set of orders with a given anchor order is
in correspondence with the vertices in the product of the side
branches corresponding to the places $P_1,\dots,P_r$.
The group of units $\mathcal{O}_K^*$
acts via Moebius transformation as described before,
leaving invariant every ball containing $0$,
i.e., every vertex on the standard apartment, and
therefore acts on every side branch. This defines a
coordinate-wise action on the product of side branches. 
Locally, this action obeys Lemma
\ref{l61}, which is used in explicit examples below to
count orbit numbers. 

\begin{Lem}\label{lsd1}
    Side branches corresponding to different anchor orders can be 
    identified, non canonically, in a way that commutes with the
    action of $\mathcal{O}_K^*$.
\end{Lem}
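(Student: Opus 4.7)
The plan is to construct the identification by translating along the standard apartment with diagonal matrices, and to exploit the fact that such translation matrices commute with the diagonal matrices implementing the unit action.

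Since the side branches are local objects supported at the finitely many places $P_1,\dots,P_r$, it suffices to construct, for each $P_i$, an equivariant bijection between two local side branches; the global identification is then obtained as their coordinate-wise product. Fix $P_i$ with local uniformizer $\pi_i$, and let $v=v_0^{[n]}$ and $v'=v_0^{[n']}$ be the local stem vertices of two anchor orders on $\mathfrak{a}_s$. I would use the local diagonal matrix $\mathtt{t}_i=\sbmattrix{\pi_i^{n'-n}}{0}{0}{1}$, whose Moebius transformation $z\mapsto\pi_i^{n'-n}z$ fixes the visual limits $0$ and $\infty$, hence preserves $\mathfrak{a}_s$, and sends $B_0^{[n]}$ to $B_0^{[n']}$, so $v\mapsto v'$. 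Since the radius $e_i$ of the tubular neighborhood defining the side branch is intrinsic to $P_i$ and independent of the anchor order, the tree automorphism $\mathtt{t}_i$ preserves the $e_i$-tubular neighborhood of $\mathfrak{a}_s$ together with the closest-point projection to it, and therefore restricts to a simplicial isomorphism from the side branch at $v$ onto the side branch at $v'$.

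Equivariance is then automatic: any unit $u\in\mathcal{O}_K^*$ acts at $P_i$ via the diagonal matrix $\sbmattrix{1}{0}{0}{u}$, which commutes with $\mathtt{t}_i$ because both are diagonal, so $\mathtt{t}_i\cdot(u\cdot w)=u\cdot(\mathtt{t}_i\cdot w)$ for every vertex $w$ of the side branch at $v$. Replacing $\mathtt{t}_i$ by its product with any diagonal matrix having local unit entries yields a different but equally valid identification, which makes the non-canonicity explicit; all such alternatives remain equivariant because they are still diagonal.

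I expect no substantive obstacle. The only point that really merits verification is that $\mathtt{t}_i$ maps the entire side branch at $v$ onto that at $v'$, not merely the stem vertex, and this follows from $\mathtt{t}_i$ being a Bruhat--Tits tree automorphism preserving $\mathfrak{a}_s$ setwise, hence also preserving its tubular neighborhoods and the closest-point projection used to define anchor orders.
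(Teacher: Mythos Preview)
Your proposal is correct and follows essentially the same approach as the paper: translate along the standard apartment by diagonal matrices built from powers of a local uniformizer, and observe that such diagonal matrices commute with the diagonal matrices $\sbmattrix{1}{0}{0}{u}$ implementing the $\mathcal{O}_K^*$-action. Your write-up is in fact more explicit than the paper's (which does not spell out why the translation carries the whole side branch onto the other side branch), but the underlying idea is identical.
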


\begin{proof}
In the local Bruhat-Tits we can permute
transitively the stem orders, i.e., the vertices in the standard 
apartment, multiplying by powers of
a local uniformizer $\pi_\nu$. This action commutes with the multiplication by units
described above. This can be done at simultaneously at all places, but  depends on the choice of the uniformizers $\pi_\nu$, whence it is non-canonical.
\end{proof}

\begin{proof}[Proof of Theorem \ref{t5}]
The local multiplication by
uniformizers described above
can be interpreted as a transitive
free action of the ideal group
$\mathcal{I}_K$ on the set
of stem orders. 
As before, the group of diagonal matrices acts on the set
of stem orders with $h_K/h_K(2)$ orbits, and every
maximal order corresponds to $h_K(2)$ homothety classes
of lattices. 

Since the prime ideals $P_i$ no longer need to
be principal, it is not necessarily true that an order
$\mathfrak{D}$ is isomorphic to $\mathbb{M}_2(\mathcal{O}_K)$
if so is its anchor order. However, the Artin distance 
 between two maximal orders is trivial
whenever the local graph distance between those orders
is even at all places. 
In other words, the isomorphism class of a maximal order
depends only on the set
of places at which the corresponding vertex lie at an
odd distance from a fixed vertex,
say the vertex $v_0^{[0]}$
corresponding to $\mathbb{M}_2(\mathcal{O}_K)$.
This mean that, whenever an order 
$\mathfrak{D}$ is in the wrong isomorphism class, 
this can be fixed by shifting the corresponding anchor vertex 
by one at a finite set of places, namely those places $\nu$ at
which the distance from $v_{\mathfrak{D},\nu}$ to $v_0^{[0]}$ 
is odd.
This is ilustrated in Fig. \ref{f3p}. If the neighbor of
$v_0$ outside the stem corresponds
to a maximal order in the wrong
isomorphism class, but $v_0$ does not, then we fix it by 
replacing the neighbor by the vertex $w$, which is at the 
same relative position with respect to a 
neighboring anchor order.

Fix an identification between side branches with different
anchor order, as in Lemma \ref{lsd1}.
Let $o$ be a orbit in the side branch.
Using the procedure described above, we prove that 
the number of orbits, under conjugation by the group 
$T_0$ of diagonal matrices, of maximal orders 
$\mathfrak{D}\cong\mathbb{M}_2(\mathcal{O}_K)$
that correspond to vertices in $o$, is the same 
as the number of orbits of anchor orders in the 
corresponding isomorphism class.
Reasoning as in Lemma \ref{l62}, we see that this 
number is $h_K/h_K(2)$, since the proof of that 
lemma works for stem orders
at an arbitrary field, and the orders in a 
different isomorphism class correspond to a coset. 
Now we prove the triviality of the
set $\Theta_{\mathfrak{D}}$, for every order $\mathfrak{D}$,
exactly as in Prop. \ref{p64}. The result follows.
\end{proof}

\begin{ex}\label{e76n}
Fig. \ref{f2p} shows some examples of products of side
branches, to illustrate some
possible behaviors.
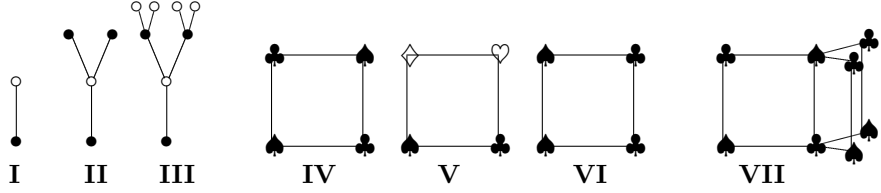
\begin{figure}
\unitlength 1mm 
\linethickness{0.4pt}
\ifx\plotpoint\undefined\newsavebox{\plotpoint}\fi 
\begin{picture}(93.5,23.75)(-20,24)
\put(9,25){$\mathbf{I}$}\put(19,25){$\mathbf{II}$}
\put(29,25){$\mathbf{III}$}\put(48,25){$\mathbf{IV}$}
\put(66,25){$\mathbf{V}$}\put(84,25){$\mathbf{VI}$}
\put(106,25){$\mathbf{VII}$}
\put(10,30){\line(0,1){8}}
\put(9.1,29.7){$\bullet$}
\put(9.1,37.7){$\circ$}
\put(20,30){\line(0,1){8}}
\multiput(19.7,39)(-.03333333,.08){75}{\line(0,1){.08}}
\multiput(20.3,39)(.03333333,.08){75}{\line(0,1){.08}}
\put(19.1,29.7){$\bullet$}
\put(19.1,37.7){$\circ$}
\put(16.1,44){$\bullet$}\put(21.9,44){$\bullet$}
\put(30,30){\line(0,1){8}}
\multiput(29.7,39)(-.03333333,.08){75}{\line(0,1){.08}}
\multiput(30.3,39)(.03333333,.08){75}{\line(0,1){.08}}
\put(29.1,29.7){$\bullet$}
\put(29.1,37.7){$\circ$}
\put(26.3,44){$\bullet$}\put(31.9,44){$\bullet$}
\multiput(27.3,44.3)(.03333333,.12){30}{\line(0,1){.08}}
\multiput(27.3,44.3)(-.03333333,.12){30}{\line(0,1){.08}}
\multiput(32.7,44.3)(.03333333,.12){30}{\line(0,1){.08}}
\multiput(32.7,44.3)(-.03333333,.12){30}{\line(0,1){.08}}
\put(25.1,47.7){$\circ$}\put(27.5,47.7){$\circ$}
\put(30.5,47.7){$\circ$}\put(32.9,47.7){$\circ$}
\put(44,30){\framebox(12,12)[]{}}
\put(43,29){$\spadesuit$}\put(55,29){$\clubsuit$}
\put(43,41){$\clubsuit$}\put(55,41){$\spadesuit$}
\put(62,30){\framebox(12,12)[]{}}
\put(61,29){$\spadesuit$}\put(73,29){$\clubsuit$}
\put(61,41){$\diamondsuit$}\put(73,41){$\heartsuit$}
\put(80,30){\framebox(12,12)[]{}}
\put(79,29){$\spadesuit$}\put(91,29){$\clubsuit$}
\put(79,41){$\spadesuit$}\put(91,41){$\clubsuit$}
\put(104,30){\framebox(12,12)[cc]{}}
\multiput(116,30)(.2282609,-.0326087){23}{\line(1,0){.2282609}}
\multiput(116,42)(.2282609,-.0326087){23}{\line(1,0){.2282609}}
\multiput(116,30)(.12980769,.03365385){52}{\line(1,0){.12980769}}
\multiput(116,42)(.12980769,.03365385){52}{\line(1,0){.12980769}}
\put(122.4,31.8){\line(0,1){12}}
\put(121,29){\line(0,1){12}}
\put(103,29){$\spadesuit$}\put(115,29){$\clubsuit$}
\put(103,41){$\clubsuit$}\put(115,41){$\spadesuit$}
\put(120,28){$\spadesuit$}\put(122,31){$\spadesuit$}
\put(120,40){$\clubsuit$}\put(122,43){$\clubsuit$}
\end{picture}
\caption{Some products of side branches. }\label{f2p}
\end{figure}
In sub-figures
$\mathbf{I}$-$\mathbf{III}$ in Figure
\ref{f2p}, we assume $r=1$, and we draw the part of the local 
branch at $P_1$ containing all vertices corresponding to local 
orders with the same anchor order. The ramification index
$e_1$ is $1$ in $\mathbf{I}$,
$2$ in $\mathbf{II}$ and $3$ in $\mathbf{III}$. 
The vertices marked with
bullets are at an even distance from the anchor vertex
at the base, while white circles denote vertices at an odd
distance.
The remaining products in Figure \ref{f2p} correspond to cases 
where $r=2$. In sub-figures $\mathbf{IV}$-$\mathbf{VI}$
we have $e_1=e_2=1$,
while sub-figure $\mathbf{VII}$ shows a case where 
$e_1=2$ and $e_2=1$.
In sub-figure $\mathbf{IV}$, the class distance between both 
$P_1$-neighbors and
$P_2$ neighbors is the same, whence the anchor order 
(corresponding to the lower left corner) 
belong to the same class
as the order corresponding to the upper right corner, which is 
locally at distance one at both places. To put the order 
corresponding to any of the remaining two corners in the right
class we need to replace the anchor vertex by a neighbor in the
stem, at either of the places $P_1$ or $P_2$.
In sub-figure $\mathbf{V}$ the class distance 
between $P_1$-neighbors is different from the distance between
$P_2$ neighbors. This means that the corresponding 
orders belong 
to four different classes and we need four different shifts to
take all cases into account. In sub-figure $\mathbf{VI}$ 
one of the two places has a
trivial image under the corresponding Artin map, whence shifts
on that direction have no effect on the isomorphism 
class of the order.
\end{ex}

\begin{rem}
    Since $\tilde{p}\mathcal{O}_K=P_1^{e_1}\cdots P_r^{e_r}$ is
    a principal ideal, its image under the 
    Artin map is trivial,
    so pictures $\mathbf{I}$, $\mathbf{III}$ and 
    $\mathbf{V}$-$\mathbf{VII}$ are not realistic. 
    But they can 
    still depict factors of larger products. 
\end{rem}

\begin{ex}\label{e67}
    Consider the case $n=2$, i.e., we study the representation
    $\rho_\chi$, where $\chi$ is the only non-trivial 
    representation of the cyclic group $C_2$ with two
    elements, which sends the generator $g$ to the diagonal
    matrix $\rho_{\chi}(g)=\sbmattrix100{-1}$. We compute 
    the integral representations for the field
   $K=\mathbb{Q}(\sqrt{-5} )$. In this case, we have
   $2\mathcal{O}_K=\mathbf{2}^{2}_1$, so $r=1$ and $e_1=2$.
   In other words, $\mathbb[B]$ is a side branch looking
   like the one depicted in sub-figure $\mathbf{II}$ of
   Figure \ref{f2p}. 
   Note that $\mathcal{O}_K^*=\{\pm1\}$ acts trivially
   on the side branch, since $1-(-1)=2$ generates 
   $\mathbf{2}^{2}_1$, and we can apply Lemma \ref{l61}.
   We conclude that there are $4h_K=8$ conjugacy classes
   of integral representations.
   Since $\mathbf{2}_1=(2,1+\sqrt{-5})$ belong to the unique
   non-trivial ideal class, $\mathbf{2}_1$-neighboring orders
   belong to two different isomorphism classes, so the relevant
   vertices in the Bruhat-Tits tree at $2$ can be chosen as in 
   Figure \ref{f3p}, where $v_0=v_0^{[0]}$ corresponds 
   to the ball 
   $B_0^{[0]}$, and therefore to the order $\mathfrak{D}_0=\mathbb{M}_2(\mathcal{O}_K)$.

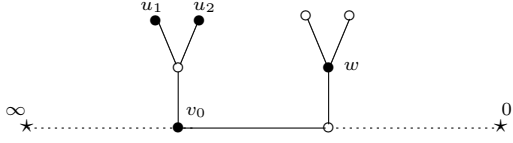
\begin{figure}
\unitlength 1mm 
\linethickness{0.4pt}
\ifx\plotpoint\undefined\newsavebox{\plotpoint}\fi 
\begin{picture}(123.5,23.75)(-50,30)
\put(20,30){\line(0,1){8}}
\put(19.6,30.5){\line(1,0){20}}
\multiput(0,30.5)(0.9,0){25}{\line(1,0){.3}}
\multiput(41,30.5)(0.9,0){25}{\line(1,0){.3}}
\multiput(19.7,39)(-.03333333,.08){75}{\line(0,1){.08}}
\multiput(20.3,39)(.03333333,.08){75}{\line(0,1){.08}}
\put(19.1,29.7){$\bullet$}
\put(19.1,37.7){$\circ$}
\put(16.1,44){$\bullet$}\put(21.9,44){$\bullet$}
\put(40,31){\line(0,1){8}}
\multiput(39.7,39)(-.03333333,.08){75}{\line(0,1){.08}}
\multiput(40.3,39)(.03333333,.08){75}{\line(0,1){.08}}
\put(39.1,29.7){$\circ$}
\put(39.1,37.7){$\bullet$}
\put(36.1,44.6){$\circ$}\put(41.9,44.6){$\circ$}
\put(-1,30){$\star$} \put(62,30){$\star$}
\put(-3,31){${}^\infty$} \put(63,31){${}^0$}
\put(21,31){${}^{v_0}$} \put(42,37){${}^{w}$}
\put(15,45){${}^{u_1}$} \put(22,45){${}^{u_2}$}
\end{picture}
\caption{Relevant vertices at $\mathbf{2}_1$ for Example
\ref{e67}. }\label{f3p}
\end{figure}
   It follows by inspection that $w$ corresponds to the ball
   $B_\pi^{[2]}$, while the vertices $u_1$ and $u_2$
   correspond to the balls $B_1^{[2]}$ and $B_{1+\pi}^{[2]}$.
   Note that, if $B=B_a^{[n]}$, a local matrix 
   $\mathtt{t}$ satisfying
   $\mathtt{t}\mathfrak{D}_0\mathtt{t}^{-1}=\mathfrak{D}_B$ is 
   $\mathtt{t}=\sbmattrix ah10$, for any element $h$ whose 
   valuation is $n$. One of the representations defined
   by the corresponding vertex $x$ is the conjugate
   $\varphi_x=\mathtt{t}^{-1}\rho_\chi \mathtt{t}$, 
   while the other is $\lambda_x=
   \mathtt{u}^{-1}\mathtt{t}^{-1}\rho_\chi 
   \mathtt{t} \mathtt{u}$, where 
   $\mathtt{u}=
   \sbmattrix{3+\sqrt{-5}}8{4+2\sqrt{-5}}{13+\sqrt{-5}}$ is
   a matrix whose columns are generators of the module
   $(\mathbf{2}_1,\mathbf{2}_1)$, since this matrix 
   belongs to the normalizer of $\mathfrak{D}_0$.
   A full list of representatives of all 
   integral representations
   can be seen in Table \ref{T1}.
   \begin{table}
\begin{center}
\begin{tabular}{||c c c||} 
 \hline
 $x$ & $\varphi_x(g)$ & $\lambda_x(g)$  \\
 \hline\hline
 $v_0$ & 
 $\left[ \begin{array}{cc} 1 & 0 \\ 0 & -1 
 \end{array}\right]^{\phantom{2}}_{\phantom{2}}$ 
 & $\left[ \begin{array}{cc} 33+16\sqrt{-5} & 
 104+8\sqrt{-5}\\ -2-10\sqrt{-5} & -33-16\sqrt{-5}
 \end{array}\right]^{\phantom{2}}_{\phantom{2}}$   \\ 
 \hline
 $w$ & 
 $\left[ \begin{array}{cc} -1 & 0 \\ 1+\sqrt{-5} & 1
 \end{array}\right]^{\phantom{2}}_{\phantom{2}}$ 
 & $\left[ \begin{array}{cc} -25-32\sqrt{-5} & 
 -136-40\sqrt{-5}\\ -11+15\sqrt{-5} & 25+32\sqrt{-5}
 \end{array}\right]^{\phantom{2}}_{\phantom{2}}$   \\ 
 \hline
  $u_1$ & 
 $\left[ \begin{array}{cc} -1 & 0 \\ 1 & 1 
 \end{array}\right]^{\phantom{2}}_{\phantom{2}}$ 
 & $\left[ \begin{array}{cc} -45-20\sqrt{-5} & 
 -136-8\sqrt{-5}\\ 4+13\sqrt{-5} & 45+20\sqrt{-5} 
 \end{array}\right]^{\phantom{2}}_{\phantom{2}}$   \\ 
 \hline
  $u_2$ & 
 $\left[ \begin{array}{cc} -1 & 0 \\ 2+\sqrt{-5} & 1 
 \end{array}\right]^{\phantom{2}}_{\phantom{2}}$ 
 & $\left[ \begin{array}{cc} -37-36\sqrt{-5} & 
 -168-40\sqrt{-5}\\ -9+18\sqrt{-5} & 37+36\sqrt{-5} 
 \end{array}\right]^{\phantom{2}}_{\phantom{2}}$   \\ 
 \hline
\end{tabular}
\end{center}
\caption{Representatives of all conjugacy classes of
non-diagonal integral representations of $C_2$ over 
the field $\mathbb{Q}[\sqrt{-5}]$.}\label{T1}
\end{table}
\end{ex}

\section{Branches and eigenvalues}

Before we go on to generalize the results in the previous
section to indecomposable abelian representation,
we need a way to compute
the part of a branch that is fixed under 
the action by conjugation
of the group $\mathfrak{L}^*$ of invertible elements in a two 
dimensional algebra $\mathfrak{L}=K[\mathtt{r}]\subseteq
\mathbb{M}_2(K)$. These results can be found in \cite{AB19},
but we include them here for the sake of completeness.

\begin{Lem}\label{l61b}
     Let $\mathtt{r}$ be a matrix with two 
     distinct eigenvalues $a$ and $b$ in $K_\nu$,
     whose valuations $\nu(a)$ and $\nu(b)$ coincide. 
    Then conjugation by $\mathtt{r}$
    fixes precisely the vertices that are at distance at most 
    $\nu\left(1-\frac ab\right)$ of the maximal path whose 
    visual limits are the fixed points of the Moebius 
    transformation defined by the matrix $\mathtt{r}$.
\end{Lem}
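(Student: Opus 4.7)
The plan is a straightforward reduction to Lemma \ref{l61} by a simultaneous change of basis and a scalar normalization. First I would observe that scalar matrices act trivially on the Bruhat--Tits tree by conjugation (by item $\spadesuit 1$), so the action of $\mathtt{r}$ coincides with the action of $b^{-1}\mathtt{r}$, whose eigenvalues are $\chi_0:=a/b$ and $1$. The hypothesis $\nu(a)=\nu(b)$ is precisely what guarantees $\chi_0\in\mathcal{O}_\nu^*$, placing us within the hypotheses of Lemma \ref{l61}.

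Next, assuming $a\neq b$, which is the implicit case given that $\mathfrak{L}=K_\nu[\mathtt{r}]$ is semisimple, I would diagonalize $\mathtt{r}$ over $K_\nu$ by choosing $\mathtt{P}\in\mathrm{GL}_2(K_\nu)$ whose columns are eigenvectors of $\mathtt{r}$, so that
\[
\mathtt{P}^{-1}(b^{-1}\mathtt{r})\mathtt{P}=\sbmattrix{\chi_0}{0}{0}{1}=\chi_0\sbmattrix{1}{0}{0}{\chi_0^{-1}}.
\]
Lemma \ref{l61}, applied with $\chi_0^{-1}$ in place of $\chi_0$, then asserts that in the new basis the set of vertices fixed by conjugation is exactly the tubular neighborhood of the standard apartment $\mathfrak{a}_s$ of radius $\nu(1-\chi_0^{-1})$. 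Since $\chi_0$ is a unit, this radius equals $\nu(1-\chi_0)=\nu(1-a/b)$.

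The last step is to transport this description back to the original coordinates. Conjugation by $\mathtt{P}$ acts on the tree as an isometry, and by item $\spadesuit 2$ its effect on visual limits is the Moebius transformation $\gamma_{\mathtt{P}}$. Hence it sends $\mathfrak{a}_s$, whose visual limits are $0$ and $\infty$, to the maximal path whose visual limits are $\gamma_{\mathtt{P}}(0)$ and $\gamma_{\mathtt{P}}(\infty)$, namely the two columns of $\mathtt{P}$ viewed as points of $\mathbb{P}^1(K_\nu)$. These are exactly the two eigenlines of $\mathtt{r}$, which coincide with the fixed points of $\gamma_{\mathtt{r}}$. The distance bound passes through the isometry unchanged, giving the claim.

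There is no serious obstacle; the only mildly delicate bookkeeping is verifying that the apartment is transported as described, which is immediate from the ball--lattice correspondence ($\heartsuit 1$, $\heartsuit 2$) combined with the action formula $\spadesuit 2$. The key conceptual point is simply that the ``standard apartment'' plays the same role for a split semisimple matrix in any basis adapted to its eigendecomposition as it does for a diagonal matrix in the canonical basis.
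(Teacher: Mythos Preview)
Your proof is correct and follows essentially the same approach as the paper: both reduce to the diagonal case of Lemma~\ref{l61} via a conjugation that takes $K_\nu[\mathtt{r}]$ to the algebra of diagonal matrices, then identify the transported standard apartment with the maximal path joining the fixed points of the Moebius transformation of $\mathtt{r}$. Your write-up simply makes the transport step more explicit (scalar normalization, the role of $\mathtt{P}$, and the check that $\nu(1-\chi_0^{-1})=\nu(1-\chi_0)$), whereas the paper's proof invokes transitivity of $\mathrm{GL}_2(K_\nu)$ on split maximal tori in one sentence.
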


\begin{proof}
    This is a straightforward 
generalization of Lemma \ref{l61} if we observe that
the action by conjugation of $\mathrm{GL}_2(K_\nu)$ 
is transitive on the set of
algebras isomorphic to $K_\nu\times K_\nu$.
The statement about the visual limits is immediate if
we observe that $\infty$ and $0$ are both, the visual
limits of the standard apartment, and the fixed points
of the Moebius transformation $\mu(z)=\frac{az}{b}$
defined by the diagonal matrix $\sbmattrix{a}{0}{0}{b}$.
\end{proof}

In the following two results we consider the case where
the matrix $\mathtt{r}$ has eigenvalues in a quadratic
extension $L_\omega$ of $K_\nu$. These results can be 
obtained from Lemma \ref{l61b} by considering the
Bruhat-Tits tree $\mathfrak{t}'$ corresponding
to this field. Its vertices correspond to balls
in $L_\omega$. If the valuation $\omega$ is an 
extension of the valuation $\nu$, every ball
$B_a^{[n]}\subseteq K_\nu$, as in \ref{ballenc}, 
defines a unique
ball in $L_\omega$ with the same radius and center,
which we call its extension to $L_\omega$.
When $L_\omega/K_\nu$ is unramified, the extensions
of balls corresponding to neighboring vertices
in $\mathfrak{t}$ correspond to neighbors
in $\mathfrak{t}'$. This defines
an embedding of graphs from $\mathfrak{t}$ into 
$\mathfrak{t}'$. This is no longer true for ramified
extensions. In fact, if $e$ is the ramification index
of an extension $L_\omega/K_\nu$, then the extension
of balls corresponding to neighbors in $\mathfrak{t}$
correspond to vertices at distance $e$ in the tree
$\mathfrak{t}'$. We can fix
this issue by defining a graph $\mathfrak{t}(e)$,
which can be obtained from $\mathfrak{t}$ by replacing
every edge by a path of length $e$, as shown in 
Fig. \ref{fnew1}. The graph $\mathfrak{t}(e)$ embeds
into $\mathfrak{t}'$ for any extension $L_\omega/K_\nu$
of local fields. The graph $\mathfrak{t}_b=\mathfrak{t}(2)$
is called the barycentric subdivision. 
\begin{figure}
    \unitlength 1mm 
\linethickness{0.4pt}
\ifx\plotpoint\undefined\newsavebox{\plotpoint}\fi 
    \[
    \begin{picture}(30,14)(0,40)
\put(15.2,43.2){$\bullet$}
\put(15.2,55.4){$\bullet$}
\put(16,44){\line(0,1){12}}
\put(3.2,43.2){$\bullet$}
\put(3.2,55.4){$\bullet$}
\put(4,44){\line(0,1){12}}
\put(4,44){\line(1,0){24}}
\put(27.2,43.2){$\bullet$}
\end{picture}\qquad\qquad 
\begin{picture}(30,14)(0,40)
\put(15.2,43.2){$\bullet$}
\put(15.2,55.4){$\bullet$}
\put(16,44){\line(0,1){12}}
\put(15.2,47.2){$\bullet$}
\put(15.2,51.4){$\bullet$}
\put(3.2,43.2){$\bullet$}
\put(3.2,55.4){$\bullet$}
\put(4,44){\line(0,1){12}}
\put(3.2,47.2){$\bullet$}
\put(3.2,51.4){$\bullet$}
\put(4,44){\line(1,0){24}}
\put(7.2,43.2){$\bullet$}
\put(11.2,43.4){$\bullet$}
\put(19.2,43.2){$\bullet$}
\put(23.2,43.4){$\bullet$}
\put(27.2,43.2){$\bullet$}
\end{picture}
\]
    \caption{A graph $\mathfrak{g}$ (left) and its 
    subdivision $\mathfrak{g}(3)$ (right).}
    \label{fnew1}
\end{figure}
We use $\pi_\omega$
for a uniformizing parameter of $L_\omega$.

\begin{Lem}\label{l61c}
     Let $\mathtt{r}$ be a matrix whose eigenvalues $a,b$
     lie in an unramified quadratic extension 
     $L_\omega$ of $K_\nu$,
     and let us denote by $\omega$ the extension 
     of the valuation
     $\nu$ to this field. Then conjugation by $\mathtt{r}$
    fixes precisely the vertices that are at distance at most 
    $\omega\left(1-\frac ab\right)$ from the vertex $v$ 
    in $\mathfrak{t}$ corresponding
    to the unique maximal order in
    $\mathbb{M}_2(K_\nu)$ containing the ring 
    of integers in 
    $\mathfrak{L}_\nu=K_\nu[\mathtt{r}]\cong L_\omega$. 
    The vertex $v$ corresponds to
    a vertex in the stem $\mathfrak{p}$ of the branch
    $\mathfrak{s}_\omega(\mathtt{r})\subseteq \mathfrak{t}'$.
\end{Lem}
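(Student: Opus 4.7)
The plan is to reduce to the diagonalizable case of Lemma \ref{l61b} by base changing to the unramified quadratic extension $L_\omega$, and then descend via Galois invariance. Let $\sigma$ denote the nontrivial element of $\mathrm{Gal}(L_\omega/K_\nu)$. First I would use the fact that, since $L_\omega/K_\nu$ is unramified, a uniformizer $\pi_\nu$ of $K_\nu$ is still a uniformizer of $L_\omega$, so that the map $\Lambda\mapsto\Lambda\otimes_{\mathcal{O}_\nu}\mathcal{O}_\omega$ identifies $\mathfrak{t}_\nu$ with the subtree of $\sigma$-fixed vertices of the larger Bruhat-Tits tree $\mathfrak{T}_\omega$, and it does so isometrically: neighbors in $\mathfrak{t}_\nu$ map to neighbors in $\mathfrak{T}_\omega$. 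A vertex $v\in\mathfrak{t}_\nu$ is fixed under conjugation by $\mathtt{r}$ if and only if its image in $\mathfrak{T}_\omega$ is, so it suffices to describe the $\mathtt{r}$-fixed vertices in $\mathfrak{T}_\omega$ that are $\sigma$-invariant.

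Over $L_\omega$ the matrix $\mathtt{r}$ becomes diagonalizable, its eigenvalues $a,b$ being Galois conjugate. In particular $\omega(a)=\omega(b)$, and Lemma \ref{l61b} applies directly in $\mathfrak{T}_\omega$: the $\mathtt{r}$-fixed vertices form the tubular neighborhood of radius $\omega(1-a/b)$ of the maximal path $\mathfrak{p}$ whose two visual limits are the eigenvector directions $[a':1],[b':1]\in\mathbb{P}^1(L_\omega)$ of $\mathtt{r}$.

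Now, since $a$ and $b$ are Galois conjugate, so are the two eigenlines, and hence the path $\mathfrak{p}$ is $\sigma$-stable with $\sigma$ swapping its two ends. Because these visual limits lie in $\mathbb{P}^1(L_\omega)\setminus\mathbb{P}^1(K_\nu)$, the involution $\sigma$ acts on $\mathfrak{p}$ as a reflection with a unique fixed vertex $v_0$; this vertex corresponds to the unique maximal order of $\mathbb{M}_2(K_\nu)$ containing the ring of integers of $\mathfrak{L}_\nu=K_\nu[\mathtt{r}]$, for by construction $\mathcal{O}_{\mathfrak{L}_\nu}$ is precisely the $\sigma$-invariants of the $\mathcal{O}_\omega$-algebra stabilizing any vertex of $\mathfrak{p}$.

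To finish, take any $v\in\mathfrak{t}_\nu$ viewed inside $\mathfrak{T}_\omega$. In a tree the closest point of $v$ to the subtree $\mathfrak{p}$ is unique, so it must be $\sigma$-fixed; hence it equals $v_0$. Therefore the distance from $v$ to $\mathfrak{p}$ in $\mathfrak{T}_\omega$ coincides with $d_{\mathfrak{T}_\omega}(v,v_0)$, and by the isometric embedding this in turn equals $d_{\mathfrak{t}_\nu}(v,v_0)$. Combining this with the bound from Lemma \ref{l61b} yields the claim. The most delicate points to get right will be the isometric and Galois-equivariant embedding $\mathfrak{t}_\nu\hookrightarrow\mathfrak{T}_\omega$ and the identification of the $\sigma$-fixed midpoint of $\mathfrak{p}$ with the maximal order containing $\mathcal{O}_{\mathfrak{L}_\nu}$; once these are in place the argument is a routine reduction to Lemma \ref{l61b}.
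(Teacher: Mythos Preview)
Your proposal is correct and follows essentially the same approach as the paper: embed $\mathfrak{t}_\nu$ into the tree $\mathfrak{t}'$ over $L_\omega$, apply Lemma~\ref{l61b} there, use the Galois involution to pin down a unique $\sigma$-fixed vertex on the path, and identify that vertex with the unique maximal order containing $\mathcal{O}_{\mathfrak{L}_\nu}$. The only cosmetic difference is that the paper establishes that the path meets $\mathfrak{t}_\nu$ by an explicit normalization (moving the visual limits to units with distinct residues, so the path visibly passes through $v_0^{[0]}$), whereas you deduce it from the $\sigma$-invariance of the nearest-point projection; both routes are fine.
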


\begin{proof}
     We apply Lemma \ref{l61b}. The maximal 
     $\mathcal{O}_{L_\omega}$-orders
     in $\mathbb{M}_2(L_\omega)$ fixed by conjugation by
     $\mathtt{r}$ are those at a distance 
     $\omega\left(1-\frac ab\right)$ or less from 
     $\mathfrak{p}$. The visual limits of
     the path $\mathfrak{p}$ are not in 
     $K_\nu$, as they correspond to the fixed points
     in the Moebius transformation, or, equivalently, the 
     eigenvectors of $\mathtt{r}$. In particular, the 
     generator $\sigma$ of the Galois group 
    $\mathrm{Gal}(L_\omega/K_\nu)$ interchange the visual 
    limits of this path, whence $\mathfrak{p}$ has a unique 
    Galois-invariant point. This implies that the path
    $\mathfrak{p}$ and the tree $\mathfrak{t}$, which is 
    pointwise Galois-invariant, cannot have more than one 
    point in common. The only Galois invariant point
    in $\mathfrak{p}$ is, therefore, the vertex of the 
    path whose distance from $\mathfrak{t}$ is minimal.
    Since Moebius transformations of the form $\mu(z)=az+b$
    with $a\in K_\nu^*$ and $b\in K_\nu$ act transitively on
    $L_\omega\smallsetminus K_\nu$, we can assume that the
    visual limits of the path are units whose image
    in the residue field generate the extension of 
    residue fields. Since the visual limits are a Galois
    orbit, they have different images on the residue field, 
    whence the maximal path connecting them passes through
    the vertex $v=v_0^{[0]}$ corresponding to the ball
    $B_0^{[0]}=
    \{x\in L_\omega|\omega(x)\geq0\}=\mathcal{O}_{L_\omega}$, 
    which is a vertex in $\mathfrak{t}$.
    We conclude that the distance from a vertex
    in $\mathfrak{t}$ to the path is the same as its distance
    to the unique vertex $v$ of this path that is contained
    in $\mathfrak{t}$. Since the ring 
    of integers in 
    $\mathfrak{L}_\nu=K_\nu[\mathtt{r}]$ is contained
    in a unique maximal order $\mathfrak{D}'$ in 
    $\mathbb{M}_2(K_\nu)$, but
    it must be contained in the maximal order 
    corresponding to every 
    vertex in the path $\mathfrak{p}$, 
    then $\mathfrak{D}'$ corresponds to $v$.
    The result follows. 
\end{proof}

\begin{Lem}\label{l61d}
     Let $\mathtt{r}$ be a matrix whose eigenvalues $a,b$
     lie in a ramified quadratic extension 
     $L_\omega$ of $K_\nu$. 
       Let $\omega$ be the (integral valued) 
       extension of $2\nu$
     to the field $L_\omega$. We identify
     $L_\omega$ with the algebra
     $\mathfrak{L}_\nu=K_\nu[\mathtt{r}]$. 
     If $\omega(\mathtt{r})$ is odd,
    then conjugation by $\mathtt{r}$ fixes no vertex
    in the Bruhat-Tits tree $\mathfrak{t}$.
    Furthermore, there is a constant $\kappa$, such that,
    if $\omega(\mathtt{r})$ is even,
    then conjugation by $\mathtt{r}$ 
    fixes precisely the vertices that are at distance at most 
    $d=\frac12\big(\omega
    \left(1-\frac ab\right)-\kappa-1\big)$ 
    from one the vertices 
    corresponding to the two maximal orders 
    containing the ring 
    of integers in 
    $\mathfrak{L}_\nu=K_\nu[\mathtt{r}]$. Furthermore,
    such $d$ is always a positive integer when 
    $\omega(\mathtt{r})$ is even.
\end{Lem}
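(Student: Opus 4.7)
The plan is to reduce the question to a branch computation for an integral unit via a scaling by $K_\nu^*$, and then to read off the fixed locus using the branch description recalled in \S3.

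First, a vertex $v_\mathfrak{D}$ is fixed by conjugation by $\mathtt{r}$ if and only if $\mathtt{r}$ normalizes $\mathfrak{D}$. Since the normalizer of a maximal order in $\mathrm{GL}_2(K_\nu)$ is $K_\nu^*\mathfrak{D}^*$, this is the same as requiring some $\lambda\in K_\nu^*$ with $\mathtt{r}/\lambda\in\mathfrak{D}^*$. In particular $\det(\mathtt{r}/\lambda)=\det(\mathtt{r})/\lambda^2$ must be a unit in $\mathcal{O}_{K_\nu}$. Identifying $\mathtt{r}$ with $x\in L_\omega$ we have $\det(\mathtt{r})=N_{L_\omega/K_\nu}(x)$, hence $\omega(\det\mathtt{r})=2\omega(x)=2\omega(\mathtt{r})$, and since $\omega|_{K_\nu}=2\nu$ this gives $\nu(\det\mathtt{r})=\omega(\mathtt{r})$. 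The unit condition therefore forces $\omega(\mathtt{r})=2\nu(\lambda)$, which is impossible when $\omega(\mathtt{r})$ is odd. This proves the first assertion.

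Second, assume $s=\omega(\mathtt{r})$ is even. Pick $\lambda_0\in K_\nu^*$ with $\nu(\lambda_0)=s/2$ and set $\mathtt{r}'=\mathtt{r}/\lambda_0$; the corresponding $x'\in L_\omega$ is a unit of $\mathcal{O}_{L_\omega}$, and the fixed locus of $\mathtt{r}$ coincides with the branch $\mathfrak{s}(\mathtt{r}')$ of this integral unit. Fixing a generator $\theta$ of $\mathcal{O}_{L_\omega}$ over $\mathcal{O}_{K_\nu}$ and writing $x'=\alpha'+\beta'\theta$ with $\alpha'\in\mathcal{O}_{K_\nu}^*$ and $\beta'\in\mathcal{O}_{K_\nu}$, a direct check identifies
$$\mathcal{O}_{K_\nu}[\mathtt{r}']=\mathcal{O}_{K_\nu}+\pi_\nu^{t'}\mathcal{O}_{L_\omega}=\mathcal{O}_{L_\omega}^{[t']},\qquad t'=\nu(\beta').$$
By the description of branches in \S3, $\mathfrak{s}(\mathcal{O}_{L_\omega}^{[t']})$ is exactly the $t'$-tubular neighborhood of $\mathfrak{s}(\mathcal{O}_\mathfrak{L})$, which in the ramified case is the edge with endpoints $v_1,v_2$. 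Hence the fixed set is the set of vertices at graph distance at most $t'$ from $\{v_1,v_2\}$, giving $d=t'$.

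Third, to express $t'$ through $\omega(1-a/b)$, write $x=\alpha+\beta\theta$ for $\mathtt{r}$ itself (so $\beta=\lambda_0\beta'$ and $\nu(\beta)=t'+s/2$), and set $\delta=\sigma(\theta)-\theta$. Then $\sigma(x)-x=\beta\delta$, and therefore
$$\omega\!\Bigl(1-\tfrac{a}{b}\Bigr)=\omega(\sigma(x)-x)-\omega(\sigma(x))=\omega(\beta)+\omega(\delta)-s=2t'+\omega(\delta).$$
Setting $\kappa=\omega(\delta)-1$, one obtains $d=t'=\tfrac12\bigl(\omega(1-a/b)-\kappa-1\bigr)$, with $\kappa$ a constant depending only on $L_\omega/K_\nu$. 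The identity $\omega(1-a/b)-\kappa-1=2t'$ shows immediately that $d$ is a nonnegative integer whenever $\omega(\mathtt{r})$ is even.

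The main obstacle is the identification of $\kappa$ in residue characteristic $2$: there one cannot always arrange $\sigma(\theta)=-\theta$, the minimal polynomial of $\theta$ has the general Eisenstein form $y^2+ay+b$, and $\delta=-a-2\theta$ must be analyzed case by case. In residue characteristic different from $2$, one chooses $\theta$ with $\theta^2\in K_\nu$, so that $\delta=-2\theta$ and $\kappa=0$; the first two steps of the argument go through verbatim in any characteristic.
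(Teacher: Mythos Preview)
Your argument is correct and arrives at the same constant $\kappa=\omega(\sigma(\pi_\omega)-\pi_\omega)-1=\omega\big(\sigma(\pi_\omega)/\pi_\omega-1\big)$ as the paper, but the route is genuinely different. The paper proceeds geometrically: it embeds the barycentric subdivision $\mathfrak{t}_b$ of $\mathfrak{t}$ into the Bruhat--Tits tree $\mathfrak{t}'$ of the ramified extension $L_\omega$, locates the apartment in $\mathfrak{t}'$ joining the two eigenvalues, and interprets $\kappa$ as the $\mathfrak{t}'$-distance from that apartment to the nearest barycenter $w\in\mathfrak{t}_b$. The formula for $d$ then drops out by measuring distances in $\mathfrak{t}'$ and halving. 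Your approach stays entirely inside $\mathfrak{t}$: you scale $\mathtt{r}$ by $K_\nu^*$ to land in $\mathcal{O}_{L_\omega}^*$, identify the fixed locus with the branch of $\mathcal{O}_{K_\nu}[\mathtt{r}']=\mathcal{O}_{L_\omega}^{[t']}$, and then invoke the tubular-neighborhood description from \S3 directly. This is more elementary and avoids the tree-extension machinery; the paper's version, in exchange, gives a concrete geometric meaning to $\kappa$ that is used later in \S8 (see the proof of Theorem~\ref{t6}).

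Two minor remarks. First, your final paragraph about residue characteristic $2$ is unnecessary: once you take $\theta=\pi_\omega$ a uniformizer, the identity $\kappa=\omega(\delta)-1$ holds uniformly, and the lemma only asserts the existence of $\kappa$, not its value. Second, the parity analysis the paper performs on $\lambda=x+y\pi_\omega$ is essentially your computation $\omega(1-a/b)=2t'+\omega(\delta)$ unwound, so the two proofs converge at that point.
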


\begin{proof}
    In this case, the graph that naturally embeds into 
    $\mathfrak{t}'$ is the barycentric subdivision 
    $\mathfrak{t}_b$ of $\mathfrak{t}$, as discussed
    before Lemma \ref{l61c}. It is proved
    as before that the Galois action has a unique invariant
    point in the path $\mathfrak{p}$ connecting the 
    invariant points (visual limits) of
    the Moebius transformation corresponding to
    $\mathtt{r}$, and such invariant
    point is the vertex $v$ that is closest
    to $\mathfrak{t}_b$. In this case, by applying
    a transformation of the form $\mu(z)=az+b$, 
    we can assume that the 
    visual limits of the path are uniformizing parameters 
    of the quadratic extensions, whence the vertex $w$ 
    in $\mathfrak{t}_b$
    that is closest to the path is the barycenter
    of an edge in $\mathfrak{t}$, which, with the 
    assumption on the visual limits, would be the
    edge joining $v_0^{[0]}$ to $v_0^{[1]}$ in
    $\mathfrak{t}$.
    Let $\kappa$ be the distance from $v$ to $w$.

    Let $\mathfrak{H}$ be the ring of integers in 
    $\mathfrak{L}_\nu=K_\nu[\mathtt{r}]
    \subseteq\mathbb{M}_2(K_\nu)$.
    Since $\mathcal{O}_{L_\omega}\mathfrak{H}$ is an order 
    in $L_\omega[\mathtt{r}]$,
    then the maximal orders in 
    $\mathbb{M}_2(L_\omega)$ containing
    $\mathfrak{H}$ correspond to a tubular neighborhood of the 
    path. Therefore, the two maximal orders in 
    $\mathbb{M}_2(K_\nu)$ containing it
    must corresponds to the endpoints of the edge in 
    $\mathfrak{t}$ having $w$ as barycenter. It follows
    that a vertex, in $\mathfrak{t}$, is at distance 
     $\omega\left(1-\frac ab\right)$ from the path,
     if and only if it lies at a distance 
     $\omega\left(1-\frac ab\right)-\kappa$ from $w$, or 
     equivalently,  at a distance 
     $\omega\left(1-\frac ab\right)-\kappa-1$ from one
     of the endpoints of the edge in 
    $\mathfrak{t}$ having a $w$ as barycenter. 
    This is a distance
    in  $\mathfrak{t}'$, so to obtain the corresponding
    distance in  $\mathfrak{t}$ we must divide by two.
    
    Now, fix a uniformizer $\pi_\omega\in L_\omega$, and
    let $\lambda=x+y\pi_\omega\in L_\omega$, 
    with $x,y\in K_\nu$ 
    be arbitrary. Note that $x$ and  
    $y\pi_\omega$ have valuations
    with different parity. If $y\pi_\omega$
    is dominant, then $x/\lambda$
    has positive valuation, whence the relation
    $$\frac{\sigma(\pi_\omega)}{\pi_\omega}-
    \frac{\sigma(\lambda)}{\lambda}=\frac x{\lambda}
    \left(\frac{\sigma(\pi_\omega)}{\pi_\omega}-1\right),$$
    implies that $\frac{\sigma(\lambda)}{\lambda}-1$ 
    has the same 
    valuation as $\frac{\sigma(\pi_\omega)}{\pi_\omega}-1$.
    On the other hand, if $x$ is dominant, then
    $$\frac{\sigma(\lambda)}{\lambda}-1=
    \frac {y\pi_\omega}{\lambda}
    \left(\frac{\sigma(\pi_\omega)}{\pi_\omega}-1\right),$$
    has a larger valuation than 
    $\frac{\sigma(\pi_\omega)}{\pi_\omega}-1$.

    Finally, if $\mathtt{r}$ is a uniformizer 
    in $\mathfrak{L}_\nu$,
    its determinant, as a matrix, is a uniformizer of $K_\nu$, 
    and therefore the Moebius transformation associated to 
    $\mathtt{r}$ cannot leave invariant any vertex of 
    $\mathfrak{t}$. It must, however, leave the path, and 
    therefore the vertex $w$, invariant. It follows that
    $$\omega\Big(\frac{\sigma(\pi_\omega)}{\pi_\omega}-1\Big)=
    \kappa.$$
    Now we conclude from the preceding computations that
    invariant vertices in $\mathfrak{t}$ exists precisely when
    $\omega(\mathtt{r})$ is even, since this coincide with the 
    condition that $\omega\left(1-\frac ab\right)-\kappa-1$
    is non-negative.
\end{proof}

Now let $L/K$ be an extension of global fields.
We define an ideal $I=I[L/K]\subseteq\mathcal{O}_L$ 
via local conditions as follows:
If $L/K$ is unramified (split or inert) at a place 
$\omega$ of $L$,
we define $I_\omega=\mathcal{O}_{L,\omega}$. 
If $L/K$ is ramified 
we define $$I_\omega=(\pi_\omega)^{\kappa+1}=
\pi_\omega\left(\frac{\sigma(\pi_\omega)}{\pi_\omega}-1\right)
=\big(\sigma(\pi_\omega)-\pi_\omega\big).$$ 
This is indeed the different of the quadratic extension. See
for instance Prop. (2.4) in \cite[\S III.2]{N99}.
Now we are ready to prove the main result of this section:

\begin{Lem}\label{l74}
    Let $\mathtt{r}\in\mathbb{M}_2(K)$ be a matrix whose 
    eigenvalues $a,b$ lie in a quadratic extension $L$ of $K$.
    Then there is a maximal order that is invariant under 
    conjugation by $\mathtt{r}$ if and only if $\frac ab$ 
    is a unit and
    $\frac ab\equiv 1\mod I$, where 
    $I=I[L/K]\subseteq\mathcal{O}_L$ is the ideal 
    defined above.
\end{Lem}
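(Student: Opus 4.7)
The plan is to verify the statement place by place, using Lemmas \ref{l61b}, \ref{l61c}, \ref{l61d} according to the splitting behaviour of $L/K$, and then glue local invariant orders into a global one via the local-global principle \textbf{LGPL} of \S2. At all but finitely many places $\nu$ of $K$, the matrix $\mathtt{r}$ and its adjugate are integral, so $\mathtt{r}$ lies in $K_\nu^{\ast}\mathrm{GL}_2(\mathcal{O}_\nu)$ and normalizes the standard order $\mathbb{M}_2(\mathcal{O}_\nu)$; this provides an automatic coherent baseline, so by property \ref{it3} the existence of a global invariant maximal order is equivalent to the existence of an invariant local vertex at every place.

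At an unramified place $\nu$ one has $I_\omega=\mathcal{O}_{L,\omega}$, so the global condition specialises to: $a/b$ is a unit at $\omega$. In the split case Lemma \ref{l61b} produces invariant vertices (a tube around the apartment joining the two eigenlines) precisely when $a,b$ have the same $\nu$-valuation, i.e.\ when $a/b$ is a unit locally; in the inert case Lemma \ref{l61c} gives invariant vertices precisely when $\omega(1-a/b)\geq 0$, which together with the automatic equality $\omega(a)=\omega(b)$ (from $\omega\circ\sigma=\omega$) is equivalent to $a/b$ being a unit at $\omega$.

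At a ramified place the content lies in matching the two conditions of Lemma \ref{l61d}, namely that $\omega(\mathtt{r})$ is even and $\omega(1-a/b)\geq \kappa+1$, to the single congruence $a/b\equiv 1\pmod{I_\omega}$, recalling that $\omega(I_\omega)=\kappa+1$ by the definition of $I$. Writing $a=x+y\pi_\omega$ with $x,y\in K_\nu$ under the isomorphism $\mathfrak{L}_\nu\cong L_\omega$ used in the proof of Lemma \ref{l61d}, we have $b=\sigma(a)=x+y\sigma(\pi_\omega)$, whence
\[
a-b=y\bigl(\pi_\omega-\sigma(\pi_\omega)\bigr),\qquad \omega(a-b)=2\nu(y)+\kappa+1,
\]
while $\omega(a)=\omega(b)=\min\bigl(2\nu(x),2\nu(y)+1\bigr)$. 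A short computation then shows that $\omega(\mathtt{r})=\omega(a)$ is even iff $2\nu(x)\leq 2\nu(y)+1$ iff $\omega(a-b)\geq\omega(b)$ iff $\omega(1-a/b)\geq\kappa+1$, so the two conditions of Lemma \ref{l61d} collapse to the single congruence above. Note also that this congruence forces $a/b$ to be a principal unit, so the global unit hypothesis is automatic at ramified places.

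The main obstacle is this ramified-place reconciliation between the parity condition on $\omega(\mathtt{r})$ and the arithmetic of the different captured by $I_\omega=(\sigma(\pi_\omega)-\pi_\omega)$; once this is done, gluing the invariant local vertices (using the standard order at the cofinite set of unproblematic places) into a global invariant maximal order is a routine application of \textbf{LGPL}.
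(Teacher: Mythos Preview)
Your argument is correct and follows the same strategy as the paper: reduce to a place-by-place criterion via \textbf{LGPL}, then invoke Lemmas \ref{l61b}--\ref{l61d} according to splitting type, with the ramified case carrying the content (the paper simply defers this to ``the proof of the preceding lemma'', whereas you spell out the computation). One small slip in your ramified chain of equivalences: the intermediate condition $\omega(a-b)\geq\omega(b)$ holds in \emph{both} parity cases (when $\omega(\mathtt{r})$ is odd one gets $\omega(a-b)-\omega(b)=\kappa\geq0$), so it cannot stand as an ``iff''; the correct intermediate is $\omega(a-b)-\omega(b)\geq\kappa+1$, and with that fix your first and last conditions are indeed equivalent.
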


\begin{proof}
    The fact that $\frac ab$ is a unit 
    guarantee that conjugation by $\mathtt{r}$
    fixes precisely the vertices in a path, locally 
    at all places splitting $L/K$, where 
    $L\cong K[\mathtt{r}]$.  At inert places, 
    conjugation by $\mathtt{r}$
    fixes the vertex corresponding to the unique 
    maximal order containing
    the maximal order of $K[\mathtt{r}]$, 
    since it leaves the commutative 
    subalgebra invariant. Finally, at 
    ramified places, the condition
    follows from the congruence, considering the proof of 
    the preceding lemma.
    We conclude that the given condition
    on $\frac ab$ is equivalent to $\mathtt{r}$ having at least
    one invariant vertex in every local Bruhat-Tits tree.

    Since conjugation by $\mathtt{r}$ leaves the order 
    $\mathbb{M}_2(\mathcal{O}_K)$ 
    invariant at every place at which 
    $\mathtt{r}\in\mathbb{M}_2(\mathcal{O}_K)^*$,
    i.e., at every place outside a finite set, the result is 
    a consequence of the
    correspondence between global lattices 
    and coherent families 
    described by properties 
    \ref{it1}-\ref{it3} in \S2.
\end{proof}

\begin{Cor}\label{c741}
    Let $\mathtt{r}\in\mathbb{M}_2(K)$ be a matrix whose 
    eigenvalues $a,b$ lie in a quadratic extension $L$ of $K$.
    Then there is a maximal order that is invariant under 
    conjugation by $\mathtt{r}$ if and only 
    if the fractional ideal $a\mathcal{O}_L$ has the form 
    $J\mathcal{O}_L$ for some fractional ideal $J$ in $K$.
\end{Cor}

\begin{proof}
    Since $\{a,b\}$ is a Galois orbit, 
    the condition that $\frac ab$ is a unit,
    or equivalently that $a\mathcal{O}_L=b\mathcal{O}_L$, 
    is equivalent to the
    Galois invariance of the ideal $a\mathcal{O}_L$. 
    For a Galois invariant
    ideal to be defined over $K$ it suffices
    to check that it have even valuation
    at the ramified places. The result follows.  
\end{proof}

\section{Indecomposable abelian representations}\label{S9B}

In this section we consider representations 
$\varphi:G\rightarrow
\mathrm{GL}_2(K)$, that are not decomposable over the 
number field
$K$, but they  are decomposable over an extension $L/K$. 
In particular $G$ is abelian. Over $L$, 
the representation $\varphi$ must be isomorphic 
to the direct sum of 
two one-dimensional representations, 
$\chi_1$ and $\chi_2$, since
$L$ is a field of characteristic $0$, and therefore the group
algebra $L[G]$ is semisimple. For every element $g\in G$, the
sum $\chi_1(g)+\chi_2(g)$ is the trace of 
$\varphi(g)$, and therefore
it belongs to $K$. For the same reason, the determinant
$\chi_1(g)\chi_2(g)$ is in $K$, whence 
$\chi_1(g)$ and $\chi_2(g)$
belong to an extension $L_g/K$ of degree $1$ or $2$. If $L_g=K$
for all $g$, then $\chi_1$ is defined over $K$ contrary to
the assumption. We conclude that $L_g\neq K$ for at 
least one $g\in G$, 
so that the eigenvalues
$\chi_1(g)$ and $\chi_2(g)$ of the 
matrix $\mathtt{r}=\varphi(g)$ 
are different, and satisfy the same 
quadratic equation $f(X)=0$ with 
coefficients in $K$. In particular,
$f\big(\phi(g)\big)=0$.
We conclude that $K[\mathtt{r}]\cong K[X]/(f)$
is both, a field, and a maximal abelian subalgebra of
$\mathbb{M}_2(K)$. This implies that $\varphi(G)$ 
is contained in a field,
and therefore it is a cyclic group. 
We can assume therefore that
$G=\langle g_0\rangle$ is cyclic, which we do in the sequel.
Furthermore, the matrix $\mathtt{r}_0=\varphi(g_0)$ 
has no eigenvalues
in the field $K$, but they are
contained in the quadratic extension $L=L_\varphi$ of $K$.
Since $\mathtt{r}_0^{|G|}$ is the identity matrix,
the eigenvalues 
of $\mathtt{r}_0$ are roots of unity, and each generates
the extension $L/K$. Moreover, if 
$\mathrm{Gal}(L/K)=\langle\sigma\rangle$,
then $\sigma(\chi_1)=\chi_2$. 

Since $\chi_1(g_0)$ and 
$\chi_2(g_0)$ are roots of unity, the trace
$\tau=\chi_1(g_0)+\chi_2(g_0)$ and the determinant
$\delta=\chi_1(g_0)\chi_2(g_0)$ belong to the ring of integers
$\mathcal{O}_K$. It does not hurt, therefore, to assume
$\mathtt{r}_0=\sbmattrix 0\delta{-1}\tau$.
The order $\mathcal{O}_K[\mathtt{r}_0]\subseteq
\mathbb{M}_2(K)$ is contained in the maximal order
$\mathfrak{H}_\varphi$ of the algebra 
$K[\mathtt{r}_0]$. By the general 
theory, the localization 
$\mathfrak{H}_{\varphi,\nu}$ at every place
$\nu$ is contained in the maximal order corresponding to
every vertex in a path $\mathfrak{p}(\nu)$.
The path $\mathfrak{p}(\nu)$ consists of a unique vertex, 
at places $\nu$ that are inert for $L/K$, 
an edge, at ramified places $\nu$,
or a maximal path, at split places $\nu$. 
These are called the stem
orders of the representation $\varphi$. 
As in \S\ref{S6}, for every maximal 
order $\mathfrak{D}$ containing $\mathcal{O}_K[\mathtt{r}_0]$,
its anchor order is defined as 
the unique stem order at minimal local distance
at all places. Also as before, the set of 
maximal orders containing
$\mathcal{O}_K[\mathtt{r}_0]$ with the same 
anchor order is a product
of side branches. The centralizer of the 
representation $\varphi$,
in this case, is the algebra $K[\mathtt{r}_0]$. The group
$T=K[\mathtt{r}_0]^*$ acts by conjugation on 
the set of stem orders.
If we choose a vector $\mathbf{v}\in K^2$, then
$\mathtt{h}\mapsto \mathtt{h}\left(\mathbf{v}\right)$ 
defines a linear 
isomorphism between $K[\mathtt{r}_0]$ and $K^2$ 
which we use to identify
both spaces in the remainder of this section. Then stem orders
have the form $\mathfrak{D}_\Lambda$ for a lattice $\Lambda$
satisfying $\mathfrak{H}_\varphi\Lambda\subseteq\Lambda$. 
In other
words, $\Lambda$ must correspond to a fractional ideal of the
algebra $K[\varphi]\cong L$. Recall that $\mathfrak{D}_\Lambda=
\mathfrak{D}_{\Lambda'}$ precisely when $\Lambda'=J\Lambda$
for some fractional ideal $J\subseteq K$. We conclude that 
the set of stem orders is naturally in correspondence with 
the group
$\mathcal{I}_L/\mathcal{I}_K$. The $T$-action by conjugation
on orders correspond to the natural action of $L^*$
on $\mathcal{I}_L/\mathcal{I}_K$ by multiplication. 
Hence, the orbit
set corresponds to the quotient 
$\mathcal{I}_L/\mathcal{P}_L\mathcal{I}_K$.
In particular the group of units 
$\mathcal{O}_L^*$ acts trivially
on the set of stem orders, but it might 
act non-trivially on side 
branches.

However, this case does present some additional difficulties to
a straightforward generalization of our results for
the split case. Firstly, it is not guaranteed 
that the set of stem 
orders contain orders in all conjugacy classes. In other words,
the maximal order $\mathfrak{H}_\varphi$ might turn up to be a 
selective order. This has been studied in the works of
C. Chevalley, see \cite{che36}, or  T. Chindburg and 
E. Friedman, see \cite{chi99}. For notational convenience, we 
use Lemma \ref{lsel} below. It is a particular case
of \cite[Th. 1]{A03}. Note that the spinor class 
field $\Sigma$,
for the particular case of matrix algebras,
is described in \textit{Ex. 1 Continued}, at the end of
\cite[\S2]{A03}.

\begin{Lem}\label{lsel}
\textnormal{\cite[Th. 1]{A03}.}
Let $\Sigma$ be the largest sub-extension of 
exponent $2$ of the
wide Hilbert class field of $K$. Then the ring of integers 
$\mathfrak{H}$ of a quadratic extension $L$ of $K$ embeds 
into every maximal order in the matrix algebra 
$\mathbb{M}_2(K)$, except when
$L\subseteq\Sigma$. In the later case 
$\mathfrak{H}$ is contained in the maximal orders
belonging to precisely one half of
all the conjugacy classes.\qed
\end{Lem}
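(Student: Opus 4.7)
The plan is to combine the local--global principle for lattices with the Artin distance machinery from Section~3. First I would check that at least one global maximal order containing $\mathfrak{H}$ exists. At every place $\nu$ of $K$ the set of maximal orders in $\mathbb{M}_2(K_\nu)$ containing the local completion of $\mathfrak{H}$ is the non-empty local branch described in Section~3 (a vertex, edge, or maximal path according as $\nu$ is inert, ramified, or split in $L/K$), so a coherent choice via \ref{it3} gives a global $\mathfrak{D}_0\supseteq\mathfrak{H}$.

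Fix such $\mathfrak{D}_0$ and let $\mathfrak{D}$ be any other maximal order containing $\mathfrak{H}$. By \ref{it1} the local orders $\mathfrak{D}_{0,\nu}$ and $\mathfrak{D}_\nu$ coincide outside a finite set, and inside the branch $\mathfrak{s}(\mathfrak{H})$ we can go from $\mathfrak{D}_0$ to $\mathfrak{D}$ by a finite chain of $\nu$-variants. Each elementary $\nu$-variant contributes $[P[\nu],\Sigma/K]$ to the Artin distance $d(\mathfrak{D}_0,\mathfrak{D})$. Because $\Sigma/K$ has exponent~$2$ and the branch is trivial at inert places, the subgroup $H\subseteq\mathrm{Gal}(\Sigma/K)$ of Artin distances achievable between two orders containing $\mathfrak{H}$ is generated by the symbols $[P[\nu],\Sigma/K]$ as $\nu$ ranges over primes of $K$ split or ramified in $L/K$. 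Conversely, given any product of such generators, successive $\nu$-variants (patched globally by \ref{it3}) produce an order realizing it, so $H$ is precisely the set of realized distances.

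The main step is to identify $H$ by class field theory. If $L\subseteq\Sigma$, then $L/K$ is unramified at every finite place, and the surjection $\mathrm{Gal}(\Sigma/K)\twoheadrightarrow\mathrm{Gal}(L/K)$ has kernel $\mathrm{Gal}(\Sigma/L)$ of index~$2$. A prime is split in $L/K$ precisely when its Artin symbol in $\mathrm{Gal}(\Sigma/K)$ lies in $\mathrm{Gal}(\Sigma/L)$; by Chebotarev these Frobenius elements exhaust $\mathrm{Gal}(\Sigma/L)$, whence $H=\mathrm{Gal}(\Sigma/L)$. If instead $L\not\subseteq\Sigma$, then $L\cap\Sigma=K$, so $\mathrm{Gal}(L\Sigma/K)=\mathrm{Gal}(L/K)\times\mathrm{Gal}(\Sigma/K)$; applying Chebotarev in $L\Sigma/K$, for every $\tau\in\mathrm{Gal}(\Sigma/K)$ one finds a prime split in $L/K$ whose Artin symbol on $\Sigma$ equals $\tau$, giving $H=\mathrm{Gal}(\Sigma/K)$.

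Since for matrix algebras the Artin distance classifies conjugacy classes faithfully, the set of conjugacy classes of maximal orders containing $\mathfrak{H}$ is a single coset of $H$ in $\mathrm{Gal}(\Sigma/K)$, of cardinality $|H|$. This equals the total number of conjugacy classes when $L\not\subseteq\Sigma$, and equals exactly one half of them when $L\subseteq\Sigma$, as claimed. The main technical obstacle is the class field theory identification of $H$: one must argue carefully that the Artin symbols attached to non-inert primes really do generate $\mathrm{Gal}(\Sigma/L)$ (respectively all of $\mathrm{Gal}(\Sigma/K)$), which is where the Chebotarev density theorem applied to the compositum $L\Sigma$ is essential.
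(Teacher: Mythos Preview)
The paper does not prove this lemma: it is stated with a \qed\ and introduced as ``a particular case of \cite[Th.~1]{A03}'', with the identification of $\Sigma$ for matrix algebras deferred to the example at the end of \cite[\S2]{A03}. Your argument is therefore not to be compared against a proof in the paper, but it is a faithful reconstruction of the spinor-class-field method underlying that reference, expressed in the branch/Artin-distance language the present paper sets up in \S3.

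Your proof is correct. The key steps---existence of a global $\mathfrak{D}_0\supseteq\mathfrak{H}$ via the local branches and \textbf{LGPL}, the identification of the realizable Artin distances as the subgroup $H$ generated by Frobenius symbols at non-inert primes, and the computation of $H$ via Chebotarev in $L\Sigma/K$---are all sound. Two small points worth making explicit: first, when $L\subseteq\Sigma$ there are no ramified primes (since $\Sigma/K$ is unramified), so $H$ is generated by split primes alone, exactly as you use; second, the assertion that the total number of conjugacy classes equals $|\mathrm{Gal}(\Sigma/K)|$ relies on the surjectivity of the Artin distance, which for $\mathbb{M}_2(K)$ follows since any $\tau=[I,\Sigma/K]$ is realized by moving $\mathfrak{D}_0$ one step at each prime dividing $I$. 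With these in hand, the index computation $[\mathrm{Gal}(\Sigma/K):H]\in\{1,2\}$ gives the statement.
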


Nevertheless, we note that
the order $\mathcal{O}_K[\mathtt{r}_0]$ does have an integral 
representation, namely $\varphi$. If we assume that
$\mathtt{r}_0=\sbmattrix 0\delta{-1}\tau$ as before, 
then the set of
local maximal orders containing 
$\mathtt{r}_0$ is in correspondence
with the invariant vertices of the Moebius transformation
$\mu(z)=\frac{\delta}{\tau-z}$, whose fixed points in
$\mathbb{P}^1$ are $\xi_1=\chi_1(g_0)$ and 
$\xi_2=\chi_2(g_0)$.
At places $\nu'$ splitting the extension $L/K$, the local
maximal orders containing $\mathtt{r}_0$ are those in a tubular
neighborhood of the maximal path with visual limits $\xi_1$
and $\xi_2$. The width of the neighborhood is the valuation
$\nu'(\xi_1-\xi_2)$, since $\xi_1$ and $\xi_2$ are 
also the eigenvalues of $\mathtt{r}_0$. We conclude that the
vertex $v_0^{[0]}$ corresponding to the ball $B_0^{[0]}$
is a leaf of the branch $\mathfrak{s}(\mathtt{r}_0)$ whenever
$\nu'(\xi_1-\xi_2)>0$. If 
$\nu'(\xi_1-\xi_2)=0$, then $\mathfrak{s}(\mathtt{r}_0)$
is a maximal path containing $v_0^{[0]}$. A similar
result follows at non-split places by passing to a quadratic 
extension.

Now assume that $K=K_0:=\mathbb{Q}[\tau,\delta]$ is the field
of definition of the representation $\varphi$. Equivalently,
$K$ is the invariant subfield, in
$\mathbb{Q}(\xi_1)=\mathbb{Q}(\xi_2)$, of the automorphism
$\sigma$ taking $\xi_1$ to $\xi_2$. Note that $\sigma^2$
must be the identity.
The ring of 
integers of $L=\mathbb{Q}[\xi_1]$ is $\mathbb{Z}[\xi_1]$
by Lemma \ref{lc1}.
Next result follows from the isomorphism 
$\mathcal{O}_K[\mathtt{r}_0]\cong\mathcal{O}_K[\xi_1]$:
\begin{Prop}
    In the preceding notations, if 
    $K=K_0$ is the field of definition of the
    representation $\varphi$, then 
    $\mathcal{O}_K[\mathtt{r}_0]=
    \mathfrak{H}_\varphi$, and therefore the branch 
    $\mathfrak{s}_{\nu'}(\varphi)$ is a path at every 
    local place
    $\nu'$. \qed
\end{Prop}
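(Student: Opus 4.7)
The plan is to reduce the claim to the statement $\mathcal{O}_K[\xi_1]=\mathcal{O}_L$, exploit Lemma \ref{lc1}, and then invoke the classification of branches of orders of maximal commutative subalgebras given at the end of \S3.

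First I would observe that evaluation at $\mathtt{r}_0$ defines a $K$-algebra isomorphism $K[\mathtt{r}_0]\xrightarrow{\sim} K[\xi_1]=L$ sending $\mathtt{r}_0$ to $\xi_1$, since both elements satisfy the same irreducible quadratic polynomial $X^2-\tau X+\delta$ over $K$ by the assumption $K=K_0=\mathbb{Q}[\tau,\delta]$. Under this isomorphism, the order $\mathcal{O}_K[\mathtt{r}_0]$ corresponds to $\mathcal{O}_K[\xi_1]$, and the maximal order $\mathfrak{H}_\rho$ corresponds to the maximal order $\mathcal{O}_L$ of $L$. Hence it suffices to show $\mathcal{O}_K[\xi_1]=\mathcal{O}_L$.

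For this equality, I would sandwich: on the one hand $\mathbb{Z}[\xi_1]\subseteq\mathcal{O}_K[\xi_1]$ is trivial; on the other hand, since $K\subseteq L$, we have $\mathcal{O}_K\subseteq\mathcal{O}_L$, whence $\mathcal{O}_K[\xi_1]\subseteq\mathcal{O}_L$ because $\xi_1\in\mathcal{O}_L$. Lemma \ref{lc1} applied to the primitive root of unity $\xi_1$ (which is forced on us since each eigenvalue of $\mathtt{r}_0$ is a root of unity generating $L/K$, so $L=\mathbb{Q}[\xi_1]$ is itself a cyclotomic field) gives $\mathcal{O}_L=\mathbb{Z}[\xi_1]$, and the sandwich closes. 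This proves the first assertion $\mathcal{O}_K[\mathtt{r}_0]=\mathfrak{H}_\rho$.

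For the second assertion, I would simply apply the trichotomy \textbf{($\diamondsuit$1)}--\textbf{($\diamondsuit$3)} at the end of \S3 to the maximal semisimple commutative subalgebra $\mathfrak{L}_{\wp'}=K_{\wp'}[\mathtt{r}_0]$ at every local place $\wp'$: its branch $\mathfrak{s}(\mathcal{O}_{\mathfrak{L}_{\wp'}})$ is either a single vertex (inert case), an edge (ramified case), or a doubly infinite maximal path (split case). Since by the first part $\mathfrak{H}_\rho=\mathcal{O}_K[\mathtt{r}_0]$ localizes to the maximal order $\mathcal{O}_{\mathfrak{L}_{\wp'}}$, the branch $\mathfrak{s}_{\wp'}(\rho)$ agrees with $\mathfrak{s}(\mathcal{O}_{\mathfrak{L}_{\wp'}})$, so there is no foliage and in all three cases it is a (possibly degenerate) path.

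There is no real obstacle here: the only substantive input is Lemma \ref{lc1}, which guarantees that a cyclotomic order already contains all algebraic integers, and the main point of the proposition is to record this as the global statement that no proper ``side branch'' can appear when the base field is the field of definition.
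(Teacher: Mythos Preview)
Your proof is correct and follows essentially the same approach as the paper: the paper derives the proposition directly from the isomorphism $\mathcal{O}_K[\mathtt{r}_0]\cong\mathcal{O}_K[\xi_1]$ together with Lemma~\ref{lc1} (stated in the paragraph immediately preceding the proposition), and you have simply spelled out the sandwich $\mathbb{Z}[\xi_1]\subseteq\mathcal{O}_K[\xi_1]\subseteq\mathcal{O}_L=\mathbb{Z}[\xi_1]$ and the appeal to ($\diamondsuit$1)--($\diamondsuit$3) that the paper leaves implicit.
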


\begin{ex}
The previous result fails to extend to a general extension $K$ 
of $K_0$. In fact, Example \ref{ec1} shows that the branch 
$\mathfrak{s}_{\nu'}(\varphi)$ is a non-trivial tubular 
neighborhood of a path when $K=\mathbb{Q}[\sqrt{-5}]$,
$\nu=\nu_{\mathbf{2}_1}$ is the dyadic place,
and $\varphi$ is the unique two-dimensional faithful 
representation of $C_4$ defined over $K$.

\end{ex}

\begin{Prop}\label{p42}
In the preceding notations, if 
    $K=K_0$ is the field of definition of the
    representation $\varphi$, then
    there are $h_{L/K}$ conjugacy classes of
    $\mathcal{O}_K$-representations that are 
    $K$-conjugate to $\varphi$.  
\end{Prop}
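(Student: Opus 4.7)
The plan is to reduce the problem to counting certain fractional $\mathcal{O}_L$-ideals, using the identity $\mathcal{O}_K[\mathtt{r}_0] = \mathfrak{H}_\rho$ established in the preceding proposition. If $\psi \colon G \to \mathrm{GL}_2(\mathcal{O}_K)$ is any integral representation $K$-conjugate to $\rho$, then $\psi(g_0)$ satisfies the same minimal polynomial $X^2 - \tau X + \delta$ as $\mathtt{r}_0$, so the order $\mathcal{O}_K[\psi(g_0)]$ is isomorphic to $\mathcal{O}_K[X]/(X^2 - \tau X + \delta)$, which by the preceding proposition equals the maximal order $\mathcal{O}_L$. Fixing once for all an isomorphism sending $X$ to $\xi_1$, the space $\mathcal{O}_K^2$ acquires through $\psi$ an $\mathcal{O}_L$-module structure, and two representations are $\mathrm{GL}_2(\mathcal{O}_K)$-conjugate precisely when the resulting modules are isomorphic as $\mathcal{O}_L$-modules. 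These modules have $K$-dimension $2 = [L\colon K]$ and are torsion free of $\mathcal{O}_L$-rank one, hence fractional $\mathcal{O}_L$-ideals $I \subseteq L$ whose isomorphism classes are parametrized by $\mathcal{G}_L$. Conversely, each fractional ideal $I$ that happens to be free of rank two over $\mathcal{O}_K$ yields, via an $\mathcal{O}_K$-basis, a well-defined $\mathrm{GL}_2(\mathcal{O}_K)$-conjugacy class of representations whose underlying $K$-representation is $\rho$. This sets up a bijection between $\Psi$ and the set of classes $[I] \in \mathcal{G}_L$ for which $I$ is $\mathcal{O}_K$-free.

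Next I would translate the $\mathcal{O}_K$-freeness condition by invoking Lemma~\ref{lp1}: the ideal $I$ is free over $\mathcal{O}_K$ if and only if the fractional $K$-ideal $N_{L/K}(I)\cdot J(L/K)$ is principal, equivalently $\bar{N}_{L/K}([I]) = [J(L/K)]^{-1}$ in $\mathcal{G}_K$, where $\bar{N}_{L/K} \colon \mathcal{G}_L \to \mathcal{G}_K$ denotes the induced norm on ideal classes. The set of solutions is therefore either empty or a coset of $\ker \bar{N}_{L/K} = \mathcal{G}_{L/K}$, whose order is $h_{L/K}$ by definition.

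To see that the fiber is non-empty, I would exhibit $\rho$ itself as a solution. The representative $\mathtt{r}_0 = \sbmattrix{0}{\delta}{-1}{\tau}$ already has entries in $\mathcal{O}_K$, since $\tau$ and $\delta$ are the trace and determinant of roots of unity, and moreover $\delta$ is itself a root of unity, hence a unit of $\mathcal{O}_K$. Consequently $\rho$ factors through $\mathrm{GL}_2(\mathcal{O}_K)$, the associated $\mathcal{O}_L$-module is the tautologically free lattice $\mathcal{O}_K^2$, and its class lies in the required fiber, which therefore has exactly $h_{L/K}$ elements. The step needing the most care is the identification $\mathcal{O}_K[\psi(g_0)] = \mathcal{O}_L$ uniformly across the $K$-class of $\rho$: this is exactly the content of the preceding proposition and depends crucially on the hypothesis $K = K_0$, as over a larger base field the order generated by $\psi(g_0)$ can be a proper suborder of $\mathcal{O}_L$, and the argument would then need the additional side-branch machinery developed for Theorem~\ref{t6}.
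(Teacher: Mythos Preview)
Your proof is correct and follows essentially the same route as the paper's: both identify the $\mathrm{GL}_2(\mathcal{O}_K)$-conjugacy classes of integral representations with the $\mathcal{O}_L$-isomorphism classes of fractional $\mathcal{O}_L$-ideals that are free over $\mathcal{O}_K$, invoke Lemma~\ref{lp1} to rewrite freeness as $N_{L/K}(I)J(L/K)$ being principal, and conclude that the relevant set is a coset of $\mathcal{G}_{L/K}$ which is non-empty because $\rho$ itself is integral. The only cosmetic difference is that the paper phrases the equivalence relation as ``$T$-orbits of invariant lattices'' (with $T=K[\mathtt{r}_0]^*\cong L^*$ acting by multiplication) rather than ``$\mathcal{O}_L$-module isomorphism classes'', but these are the same thing since two fractional ideals are $\mathcal{O}_L$-isomorphic precisely when they lie in the same $L^*$-orbit.
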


\begin{proof}
    Recall that $\varphi:G\rightarrow\mathbb{M}_2(K)^*$ 
    turns $K^2$ into
    a $K[\mathtt{r}_0]$-module, that must be isomorphic to 
    $L$ as an $L$-module, when we identify the subalgebra
    $K[\mathtt{r}_0]$ with $L$.
    In particular, the $\mathcal{O}_K$-lattices that are
    $\mathcal{O}_K[\mathtt{r}_0]$-invariant correspond 
    precisely with
    the fractional $\mathcal{O}_L$-ideals $I\subseteq L$. 
    The latter are
    free as $\mathcal{O}_K$ modules precisely when the ideal
    $N_{L/K}(I)J$, as defined in Lemma \ref{lp1}, is principal.
    It follows that the $T$-orbits of these modules 
    correspond to
    the elements in a coset of $\mathcal{G}_{L/K}$ 
    in $\mathcal{G}_L$,
    provided that $J=J(L/K)$ is in the same class 
    as the norm in
    $\mathcal{I}_K$ of an ideal in $\mathcal{I}_L$. 
    Otherwise, none of
    these modules would be free, but we know 
    this is not the case
    since $G$ does have an integral representation.
\end{proof}

\begin{Cor}\label{cor831}
   In the preceding notations,
   the number of $T$-conjugacy classes of maximal orders 
    that are global conjugates of $\mathbb{M}_2(\mathcal{O}_K)$
    and contain $\varphi(G)$ is $\frac{h_{L/K}}{t}$,
    where $t$ is the order of the image of 
    $\mathcal{G}_K(2)$ in 
    $\mathcal{G}_{L/K}\subseteq\mathcal{G}_L$,
    under the map $[J]\mapsto[J\mathcal{O}_L]$.
\end{Cor}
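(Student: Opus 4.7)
The plan is to combine Propositions~\ref{percc} and~\ref{p42} with Lemma~\ref{Lemcc}. By Proposition~\ref{percc}, there is a surjection $f\colon\Psi\to\Omega$ from $\mathrm{GL}_2(\mathcal{O}_K)$-conjugacy classes of integral representations in the class of $\rho$ onto the $T$-conjugacy classes of maximal orders $\mathfrak{D}\cong\mathbb{M}_2(\mathcal{O}_K)$ containing $\rho(G)$ (here $T=\mathfrak{L}^*=L^*$), and every fibre of $f$ has the same cardinality $|\tilde{N}_{\mathtt{a}}|$. Since Proposition~\ref{p42} gives $|\Psi|=h_{L/K}$, the corollary reduces to proving $|\tilde{N}_{\mathtt{a}}|=t$, which by Lemma~\ref{Lemcc} amounts to the identification of subgroups
$$\Theta_{\mathfrak{D}}=\ker\!\bigl(\phi\colon\mathcal{G}_K(2)\to\mathcal{G}_{L/K},\ [J]\mapsto[J\mathcal{O}_L]\bigr)$$
inside $\mathcal{G}_K(2)$, for then $|\tilde{N}_{\mathtt{a}}|=|\mathcal{G}_K(2)|/|\ker\phi|=|\operatorname{Im}\phi|=t$.

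To establish this identification I would first note that under the isomorphism $\mathfrak{L}=K[\mathtt{r}_0]\cong L$ the determinant on $\mathfrak{L}$ equals the field norm $N_{L/K}$: direct computation for $\mathtt{r}_0=\sbmattrix0\delta{-1}\tau$ gives determinant $\delta=N_{L/K}(\xi_1)$, and both functions are multiplicative. The proposition preceding~\ref{p42} ensures that every maximal order containing $\rho(G)$ is a stem order, so $\mathfrak{D}=\mathfrak{D}_I$ for some fractional $\mathcal{O}_L$-ideal $I$. A scalar $c\in L^*$ acts on $L\cong K^2$ by multiplication and hence normalizes $\mathfrak{D}_I$ iff $cI=J'I$ as $\mathcal{O}_K$-lattices for some $J'\in\mathcal{I}_K$, equivalently iff $(c)=J'\mathcal{O}_L$ in $\mathcal{I}_L$. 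In words, $N_\mathfrak{D}\cap L^*$ consists precisely of generators of principal $\mathcal{O}_L$-ideals that are extended from $K$.

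Now suppose $[J]\in\Theta_{\mathfrak{D}}$: pick $c\in L^*\cap N_\mathfrak{D}$ with $J^2=(N_{L/K}(c))$ and write $(c)=J'\mathcal{O}_L$. Taking norms of the latter yields $(N_{L/K}(c))=(J')^2$, so $J^2=(J')^2$ in the torsion-free group $\mathcal{I}_K$, whence $J=J'$ and $J\mathcal{O}_L=(c)$ is principal, i.e.\ $[J]\in\ker\phi$. Conversely, if $J\mathcal{O}_L=(c)$ is principal with $[J]\in\mathcal{G}_K(2)$, then $c$ lies in $N_\mathfrak{D}\cap L^*$ by the characterization above, and $J^2=(N_{L/K}(c))$ follows by taking norms, so $[J]\in\Theta_{\mathfrak{D}}$. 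This proves $\Theta_\mathfrak{D}=\ker\phi$ and completes the argument. The single genuinely delicate point is the characterization of $N_\mathfrak{D}\cap L^*$ as the extended principal ideals; everything else is routine bookkeeping with norms, and the constancy of $|\tilde{N}_{\mathtt{a}}|$ across $\mathfrak{D}$ is guaranteed by the last assertion of Proposition~\ref{percc}.
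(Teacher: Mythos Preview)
Your proof is correct and follows essentially the same route as the paper's: both identify $\Theta_{\mathfrak{D}}$ with $\mathcal{G}_K(2)\cap\mathcal{P}_L=\ker\phi$ by characterizing $N_{\mathfrak{D}}\cap L^*$ as those $c$ whose principal $\mathcal{O}_L$-ideal is extended from $\mathcal{I}_K$, then apply Lemma~\ref{Lemcc} and divide $h_{L/K}$ by the resulting fibre size $t$. One small correction: the last assertion of Proposition~\ref{percc} only guarantees that $|\tilde{N}_{\mathtt{a}}|$ is independent of the choice of $\mathtt{a}$ for a \emph{fixed} $\mathfrak{D}$, not that it is constant as $\mathfrak{D}$ varies; that constancy is instead a consequence of your own computation, since $\Theta_{\mathfrak{D}}=\ker\phi$ visibly does not depend on $\mathfrak{D}$.
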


\begin{proof}
    Identify $K^2$ with $L$ as before. 
    Denote by $\mathcal{I}_K(2)$
    the pre-image in $\mathcal{I}_K$ of $\mathcal{G}_K(2)$.
      An element $\lambda$ of 
      $L^*\cong K[\mathtt{r}_0]^*=T$ fixes one (and 
      hence every) order in the branch of 
      $\mathfrak{H}_\varphi$, locally
      everywhere, if and only if it generates a 
      fractional ideal of the  form 
      $J\mathcal{O}_L$, for some fractional ideal 
      $J\subseteq K^*$, by Cor.
      \ref{c741}. 
      Note that $J^2$ is generated by 
      $N_{L/K}(\lambda)$, so that
      $J\in\mathcal{G}_K(2)$. 
      This means that for every order $\mathfrak{D}$ containing
      $\mathtt{r}_0$ we have 
      $\Theta_{\mathfrak{D}}=
      \mathcal{G}_K(2)\cap\mathcal{P}_L$, when we 
      identify $\mathcal{I}_K$ with its natural 
      image in $\mathcal{I}_L$ as
      usual. This implies that $|\Theta_{\mathfrak{D}}|=
      \frac{h_K(2)}t$, and therefore each maximal 
      order corresponds
      to  $\frac{h_K(2)}{h_K(2)/t}=t$ representations by Lemma
      \ref{Lemcc}. The result follows.
\end{proof}

\begin{ex} 
    For the group considered in Ex. \ref{e25},
    the class group $\mathcal{G}_K$ is trivial, 
    and therefore $t=1$.
\end{ex}

\begin{rem}
    We note that the proof of Prop. \ref{p42} 
    extends almost verbatim
    to $n$-dimensional irreducible representations.
\end{rem}

For a general extension, however, the 
order $\mathcal{O}_K[\mathtt{r}_0]$
does not need to equal the maximal order $\mathfrak{H}_\varphi$
of its spanned algebra.  This means that there is a 
finite set $\Pi$
of places $\nu$ where $\mathcal{O}_\nu[\mathtt{r}_0]\neq
\mathfrak{H}_{\varphi,\nu}$, according to \ref{it2}. 
At those places, the local branch
$\mathfrak{s}_{\nu}(\mathtt{r}_0)$ has foliage vertices. 
 Contrary to the split case, 
 at non-split places there 
are only one or two local stem orders. However, 
side branches still look as those depicted
in Figure \ref{f2p}.

\begin{Lem}
    Side branches corresponding to different anchor 
    orders can be 
    identified, non canonically, in a way that 
    commutes with the
    action of $\mathcal{O}_K^*$.
\end{Lem}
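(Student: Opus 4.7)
The plan is to imitate the proof of Lemma~\ref{lsd1} in this indecomposable setting. The centralizer of $\rho$ is the field $L\cong K[\mathtt{r}_0]\subseteq\mathbb{M}_2(K)$, so the group $T=L^*$ acts by conjugation on $\mathbb{M}_2(K_\nu)$ at every place $\nu$, preserves the local branch $\mathfrak{s}_\nu(\mathtt{r}_0)$, and permutes the stem orders. I would produce the desired identification by choosing, at each place in the finite set $\Pi$ where foliage actually appears, an element of $L_\nu^*$ carrying the local stem vertex below one anchor order to the local stem vertex below the other.

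First I would verify the local transitivity of the $T$-action on stem vertices, case by case according to the splitting of $L/K$ at $\nu$. At a split place, $L_\omega\cong K_\nu\times K_\nu$ and multiplication by $(\pi_\nu,1)$ translates the standard apartment by one step, so iterating carries any stem vertex to any other. At a ramified place the stem is a single edge, and conjugation by a local uniformizer of $L_\omega$ exchanges its two endpoints, as follows from the barycentric description used in the proof of Lemma~\ref{l61d}. At an inert place the stem is a single vertex, so there is nothing to do. In each case conjugation by the chosen element is a tree isometry that preserves $\mathfrak{s}_\nu(\mathtt{r}_0)$, and therefore carries the whole side branch over one stem vertex bijectively onto the side branch over its image.

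Next I would assemble these local choices into a global identification. Since side branches are nontrivial only at the finitely many places of $\Pi$, only finitely many choices are required. The local--global principle \ref{it1}--\ref{it3} then guarantees that the resulting adelic element acts on global orders in the expected way, giving a bijection between the product of side branches over any two anchor orders. The identification depends on the choices of local uniformizers in $L$, hence is non-canonical, as asserted.

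Commutation with the $\mathcal{O}_K^*$-action is then automatic: every element of $\mathcal{O}_K^*\subseteq U$ acts by conjugation by an element of $L^*$, and the identification itself is also induced by conjugation by an element of $L^*$; since $L$ is a commutative algebra, the two conjugations commute. The only subtle point I expect is the ramified case of the first step, where one must check that a uniformizer of $L_\omega$ really swaps the two endpoints of the stem edge rather than fixing them; this is precisely what is extracted in the proof of Lemma~\ref{l61d}, so no genuinely new computation is required.
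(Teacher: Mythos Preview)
Your proposal is correct and follows essentially the same route as the paper: a local case analysis (split, inert, ramified) producing an element of $L_\nu^*$ that carries one stem vertex to another, with commutation coming for free from the commutativity of $L$. The paper's proof is terser---it absorbs your global-assembly paragraph into ``the result follows as before''---and it phrases the commutation for $\mathcal{O}_L^*$ rather than $\mathcal{O}_K^*$, but the substance is identical.
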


\begin{proof}
This is proved locally, as for Lemma \ref{lsd1}. 
At split places
we can apply a conjugation and assume 
that the stem is indeed the
standard apartment $\mathfrak{a}_s$. Then the centralizer 
is the group $T\subseteq \mathrm{GL}_2(K)$ of diagonal 
matrices and the proof goes precisely as for Lemma \ref{lsd1}. 
At inert places the stem order is unique, 
so there is nothing to 
prove. At ramified places the uniformizer of the local algebra
$\mathfrak{L}_\nu=\mathcal{O}_\nu[\mathtt{r}_0]\cong L_\omega$
transpose, by conjugation, the two maximal orders containing 
the maximal order $\mathfrak{H}_{\varphi,\nu}$, 
while it centralizes
the representation, so it commutes with the action of the group
of units $\mathcal{O}_L^*$, as they act via the identification
$\mathcal{O}_L^*=\mathfrak{H}_\varphi^*\subseteq
\mathfrak{L}_\nu^*$. 
Now the result follows as before.
\end{proof}

\begin{proof}[Proof of Theorem \ref{t6}]
It is immediate that $I'\mapsto U_{I'}$
preserves inclusions, and the condition
defining the group $U$ guarantees
that its leaves invariant the stem orders,
as we saw in the proof of 
Cor. \ref{cor831} above.
This proves that $U$ acts on the product
of side branches.
From Lemma \ref{l74}, we conclude both,
$\frac{\sigma(\lambda)}\lambda\in\mathcal{O}_K^*$ and
$\frac{\sigma(\lambda)}\lambda\equiv1\mod I[L/K]$, 
so that $U_{(1)}=U$.

Locally we write 
$\mathcal{O}_\nu[\mathtt{r}_0]=
\mathfrak{H}_{\varphi,\nu}^{[m_\nu]}=
\mathcal{O}_K\mathtt{1}+\pi_\nu^{m_\nu}
\mathfrak{H}_{\varphi,\nu}$,
where $m_\nu\geq 0$ equals the largest distance
a vertex in the side branch can be from its anchor vertex. 
Globally, we have $\mathcal{O}_K[\mathtt{r}_0]=
\mathcal{O}_K\mathtt{1}+I\mathfrak{H}_\varphi$, 
where $I_\varphi=
\prod_{\nu\in\Pi}P[\nu]^{m_\nu}$
is the smallest ideal valued distance from 
any order corresponding to a vertex $w$ in 
$\mathbb{B}$ to the anchor order 
$\mathfrak{D}_v$. In other words,
all ideal valued distances $I'=I_{(v,w)}$ divide $I_\varphi$. 
The remainder of the proof is very similar to
that of Theorem \ref{t5} with one mayor difference. 
The contribution of all vertices
is no longer the same. The computation of
the group $\Theta_{\mathfrak{D}}$, in this case,
does depend on the distance ideal $I'$
from an order $\mathfrak{D}$ to the 
corresponding anchor order. 

By Cor. \ref{c741}, an element $\lambda$ of 
$L^*\cong K[\mathtt{r}_0]^*=T$ fixes one (and 
hence every) anchor order if it generates a 
fractional ideal of the 
form $J\mathcal{O}_L$ 
for some fractional ideal $J\subseteq K^*$. 
This is the reason why the contribution of
a stem order is $t=\frac{h_K(2)}{\mu_{(1)}}$.
It suffices, therefore to prove that the contribution
of a foliage order at distance $I'$ from its stem order
is $t_{I'}=\frac{h_K(2)}{\mu_{I'}}$. For this, it
suffices to prove that an element $\lambda\in U$
fixes every vertex at distance $I'$ from the anchor vertex
precisely when $\lambda\in U_{I'}$. This is done locally.
As usual we fix an identification between side branches
of different stem orders.

At places $\nu$ splitting the extension $L/K$, this is a direct 
application of Lemma \ref{l61}, since the image of 
$\lambda$ in the stabilizer $\mathfrak{L}^*$
is a matrix with eigenvalues $\lambda$ and $\sigma(\lambda)$. 
 When $\nu$ is inert for the 
extension $L/K$, the result follows by passing to a suitable quadratic 
extension and observing that the branch of $\mathfrak{H}_{\varphi,\nu}$
in the extension is a maximal path containing the unique vertex
in the branch of $\mathfrak{H}_{\varphi,\nu}$ in $K$, so for
vertices in the Bruhat-Tits tree at $K$ their distance to either
branch (over $K$ or $L$) is the same. In all that follows we assume 
that $L/K$ ramifies at $\nu$. 

Let $\omega$ the only place of $L$ lying above $\nu$. Then
$\mathcal{O}_{L,\omega}=\mathcal{O}_{K,\nu}[\pi_\omega]$.
Recall that the distance from either vertex, $w_1$ 
or $w_2$, in the
branch of $\mathfrak{H}_{\varphi,\nu}$, to the path
$\mathfrak{p}\subseteq\mathfrak{t}$, as in Lemma
\ref{l61d}, is $\kappa+1=\omega\left(\sigma(\pi_\omega)-\pi_\omega\right)$.
A vertex $w\in\mathfrak{t}$,
at distance $m$ from the closest vertex in
$\{w_1,w_2\}$, is at a distance of
$2m+\kappa+1$
from the stem $\mathfrak{p}$. The $2$ appears because the 
extension $L_\omega/K_\nu$ is ramified.
Since $I[L/K]_\omega=
\left(\sigma(\pi_\omega)-\pi_\omega\right)=
P[\omega]^{\kappa+1}$,
by definition, and $P[\omega]^2=P[\nu]\mathcal{O}_L$
since the extension is ramified, we conclude
that the valuation of $\lambda-\sigma(\lambda)$
is large enough for the corresponding matrix
to fix the vertices at distance $m$ from
the closest vertex in $\{w_1,w_2\}$ precisely when
$\frac{\sigma(\lambda)}{\lambda}\equiv1\mod 
P[\nu]_\nu^mI[L/K]_\omega$.
The result follows.
\end{proof}

\begin{ex}\label{e89}
    Let $G=C_4=\langle g\rangle$, and let $
    K=\mathbb{Q}[\sqrt{-5}]$. Consider
    the representation defined by 
    $\varphi(g)=\sbmattrix 01{-1}0$. Over the field 
    $K_0=\mathbb{Q}$, the maximal orders containing this
    matrix has only two possible completions at the place $2$.
    They correspond to the vertices $u=v_0^{[0]}=v_1^{[0]}$ 
    and $w=v_1^{[1]}$.
    Since $K/K_0$ is ramified at $2$, there exists an intermediate
    vertex $v=v_1^{[1/2]}$ at the dyadic place $\mathbf{2}_1$.
    Note that the ball names correspond to a valuation
    $\nu=\nu_{\mathbf{2}_1}$ satisfying $\nu(2)=1$,
    and therefore the valuation of a uniformizer is $\frac12$.
    We use this normalized valuation in this example
    and next for simplicity.
    Furthermore, Since $L=K[\sqrt{-1}]=
    \mathbb{Q}[\sqrt{-1},\sqrt5]$ is an unramified quadratic
    extension of $K$, i.e., it is contained in the Hilbert class
    field, it is selective by Lemma \ref{lsel}. 
    Since the class number
    of $K$ is $2$, the order $\mathfrak{H}_\varphi$ embeds into
    the maximal orders in precisely one conjugacy class.
    Given that:
    \begin{itemize}
        \item the invariant points in $\mathbb{P}^1$ of
    the Moebius transformation $\iota(z)=\frac{-1}z$ are
    the roots $\pm\sqrt{-1}$, and the smallest ball containing
    both is $B_{\sqrt{-1}}^{[1]}$,
        \item the ball $B_{\sqrt{-1}}^{[1]}$ corresponds to
    the only vertex of the path joining the roots $\pm\sqrt{-1}$
    that is defined over the field $K$,
        \item $\nu(-\sqrt{-1}-\sqrt{-1})=\nu(2)=1$, and 
    $\nu(\sqrt{-5}-\sqrt{-1})=1$,
    \end{itemize}
    we conclude that the 
    branch $\mathfrak{s}_K(\varphi)$ is a ball centered at
    the vertex $v_c=v_{\sqrt{-1}}^{[1]}$, as depicted in Figure \ref{twoballs}.A.
\begin{figure}
\unitlength 1mm 
\linethickness{0.4pt}
\ifx\plotpoint\undefined\newsavebox{\plotpoint}\fi 
\[
\begin{picture}(38,20)(-6,32)
\put(0,36){A}
\put(15.2,43.2){$\bullet$}\put(15.2,51.8){$\circ$}
\put(16,44){\line(0,1){8}}
\multiput(16,44)(0.086,-.05){80}{\line(1,0){.3}}
\multiput(16,44)(-0.086,-.05){80}{\line(-1,0){.3}}
\put(7.7,38.7){$\circ$}\put(22.3,38.7){$\circ$}
\multiput(15.4,52.6)(-0.086,.05){40}{\line(-1,0){.3}}
\multiput(16.6,52.6)(0.086,.05){40}{\line(1,0){.3}}
\put(10.8,53.8){$\bullet$}\put(19.6,53.8){$\bullet$}
\put(8.6,38.8){\line(0,-1){4}}\put(23.2,38.8){\line(0,-1){4}}
\multiput(8.1,39.7)(-0.086,.05){40}{\line(-1,0){.3}}
\multiput(23.7,39.7)(0.086,.05){40}{\line(1,0){.3}}
\put(3.8,40.8){$\bullet$}\put(26.2,40.8){$\bullet$}
\put(7.7,34){$\bullet$}\put(22.3,34){$\bullet$}
\multiput(10.6,54.6)(-0.9,0){12}{\line(1,0){.3}}
\multiput(20.6,54.6)(0.9,0){12}{\line(1,0){.3}}
\put(-3.4,54){${}^\infty\star$}\put(31,54){$\star^1$}
\multiput(15.6,44.6)(-0.8,0.1){12}{\line(1,0){.3}}
\multiput(15.6,44.6)(0.8,0.1){12}{\line(1,0){.3}}
\put(-4.5,45.5){${}^{-\sqrt{-1}}\star$}
\put(25,45.5){$\star^{\sqrt{-1}}$}
\put(15.2,53.8){$v$}\put(10.8,55.8){$v_0$}
\put(19.6,55.8){$v_1$}\put(15.2,40.8){$v_c$}
\end{picture}
\qquad\qquad\qquad\qquad
\begin{picture}(32,20)(0,32)
\put(0,36){B}
\put(20,44){$\circ$}
\put(21,45.4){\line(0,1){8}}\put(21.6,45){\line(1,0){8}}
\put(20.4,45){\line(-1,0){8}}\put(21,44.4){\line(0,-1){8}}
\put(12,44){$\bullet$}\put(28,44){$\bullet$}
\put(20,52){$\bullet$}\put(20,36){$\bullet$}
\put(18.5,46){$v$}\put(12,47){$v_0$}
\put(28,47){$v_1$}
\multiput(12,45)(-0.9,0){12}{\line(1,0){.3}}
\multiput(28,45)(0.9,0){12}{\line(1,0){.3}}
\put(-2.4,44){${}^\infty\star$}\put(38,44){$\star^1$}
\multiput(21,45.8)(0.45,0.45){12}{\line(1,0){.3}}
\multiput(21,44)(0.45,-0.45){12}{\line(1,0){.2}}
\put(26.5,51){$\star^{-\omega}$}
\put(26.5,37){$\star_{-\omega^2}$}
\end{picture}
\]
\caption{Two balls in the Bruhat-Tits tree. }\label{twoballs}
\end{figure}
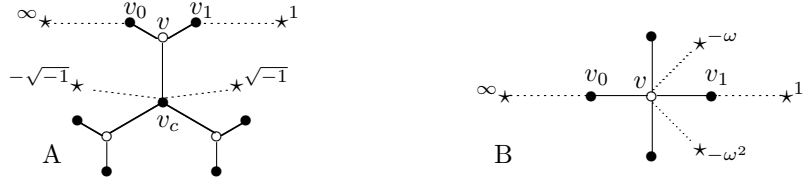
    By parity consideration, the anchor order $\mathfrak{D}_c$
    of $\mathfrak{D}_0$, which corresponds to the vertex $v_c$,
    is isomorphic to $\mathbb{M}_2(\mathcal{O}_K)$.
    We conclude that all stem orders for this representation
    are isomorphic to $\mathbb{M}_2(\mathcal{O}_K)$, so there
    are no representation associated to the 
    white circles in the picture.

    As for the number of representations corresponding to
    each vertex, we note that
    $$U=\left\langle1+\sqrt{-1}\right\rangle 
    K^*\mathcal{O}^*_L,$$
    since $u=1+\sqrt{-1}$ is a generator 
    in $L$ of the extension
    of a non-principal fractional ideal in $K$. It follows that
    $\mu_{(1)}=2$, and therefore $t_{(1)}=1$. 
    On the other hand,
    we have $\frac u{\sigma(u)}=\sqrt{-1}=1+\epsilon$, where
    $\epsilon$ is a uniformizer. This is an element of order
    $2$ in $(\mathcal{O}_L/2\mathcal{O}_L)^*$. This proves that
    $u\notin U_{(2)}$. We need to check, however, that no 
    element of the form $uu'$, with $u'\in\mathcal{O}_L^*$, 
    is in $U_{(2)}$. Note that $\mathcal{O}_L^*
    =\langle\sqrt{-1},\eta_0\rangle$,
    where $\eta_0=\frac{1+\sqrt{5}}2$ is a fundamental unit 
    of $L$. In fact, $\eta_0$ has order $3$ modulo $2$, 
    as it is contained
    in a quadratic field where $2$ is inert, while $2$ 
    divides neither $\eta_0$ nor $\eta_0-1$. We note that 
    both $\frac {\sqrt{-1}}{\sigma(\sqrt{-1})}=-1$ and
    $\frac {\eta_0}{\sigma(\eta_0)}=\eta_0^2$ are
    contained in a subgroup of order $3$ in 
    $(\mathcal{O}_L/2\mathcal{O}_L)^*$. We conclude
    that $U_{(2)}\subseteq K^*\mathcal{O}_L^*$. In particular, 
    $\mu_{(2)}=1$, so  that the black leaves in Fig. 
    \ref{twoballs}.A contribute
    with two representations each. We conclude that the number
    of integral representations is $13h_{L/K}$.
\end{ex}

\begin{ex}\label{e89b}
    Let $G=C_6=\langle g\rangle$, and let 
    $K=\mathbb{Q}[\sqrt{-15}]$. Consider
    the representation defined by 
    $\varphi(g)=\sbmattrix 01{-1}1$. Over the field 
    $K_0=\mathbb{Q}$, the maximal orders containing this
    matrix have only two possible completions at the place $3$.
    They correspond to the vertices $u=v_1^{[0]}$ 
    and $w=v_1^{[1]}$.
    Since $K/K_0$ is ramified at $3$, there 
    exists an intermediate
    vertex $v_1^{[1/2]}$ at the corresponding
    place $\mathbf{3}_1$. We normalize the valuation
    $\nu=\nu_{\mathbf{3}_1}$ in a way that $\nu(3)=1$.
    Furthermore, since $L=K[\sqrt{-3}]=
    \mathbb{Q}[\sqrt{-3},\sqrt5]$ is an unramified quadratic
    extension of $K$, it is selective by Lemma \ref{lsel}. 
    Since the class number
    of $K$ is $2$, the order $\mathfrak{H}_\varphi$ embeds into
    the maximal orders in precisely one conjugacy class.
    The invariant points in $\mathbb{P}^1$, 
    of the corresponding
    Moebius transformation $\iota_0(z)=\frac1{1-z}$ are
    $-\omega$ and $-\omega^2$, where 
    $\omega=\frac{-1+\sqrt{-3}}2$ is a primitive cubic root of
    $1$. These fixed points span the ball
    $B_{-\omega}^{[1/2]}$, since 
    $\nu(-\omega+\omega^2)=
    \nu(1-\omega)=\frac12\nu(3)=\frac12$.
    Again using $\nu(1-\omega)=\frac12$, we conclude that
    $B_{-\omega}^{[1/2]}=B_1^{[1/2]}$. it follows that 
    $v=v_1^{[1/2]}$ is actually the center of the ball
    $\mathfrak{s}_K(\varphi)$. This proves that, in this case,
    no stem order is isomorphic to 
    $\mathbb{M}_2(\mathcal{O}_K)$.
    In other words, the order 
    $\mathfrak{H}_\varphi$ has no integral representation.
\end{ex}

\begin{ex}\label{e911}
    Let $G=C_n$, where $n=p^t$ is a prime power. 
    Let $\nu$ be the only
    place of $K_0$ lying over $p$.
    Assume $K/K_0$  has even ramification degree at some, 
    but not all, the places over $\nu$. For instance, 
    this is the case
    for a degree-$4$ extension,
    if there exist an intermediate field 
    $K_0\subseteq F\subseteq K$
    such that $F/K_0$ splits at $\nu$, and then $K/F$ ramifies
    at one of the places over $\nu$, say $\nu_1$, 
    but not the other,
    say $\nu_2$. Then, reasoning as in the preceding 
    examples, it can be shown that the $n$-th root of 
    unity do not generate the maximal
    order at $\nu_1$, but still generates a ramified extension
    over $\nu_2$. This proves that Theorem \ref{t6} applies,
    since $\mathfrak{H}_\varphi$ cannot be selective, 
    but non-trivial side branches do appear at $\nu_1$.
\end{ex}

\begin{ex}\label{e912}
    Let $G=C_n$, where $n=p^t$ is an odd prime power, and
    let $\nu$ be as above. Then, if
    $K/K_0$ has even ramification degree at every place 
    lying over $\nu$, we can show that $L/K$ is 
    unramified. This shows
    that $\mathfrak{H}_\varphi$ is selective,
    so Theorem \ref{t6} does not apply. 
    However, this fails for $p=2$,
    since $\sqrt{-1}$ generates a ramified extension of
    $\mathbb{Q}_2(\sqrt2)$.
\end{ex}

\section{Absolutely irreducible representations}\label{S9}

For an absolutely irreducible representation 
$\psi:G\rightarrow\mathrm{GL}_2(K)$, the order
$\mathfrak{H}$ spanned by $\psi(G)$ is an order
of maximal rank, and therefore it coincides with
the maximal order $\mathbb{M}_2(\mathcal{O}_K)$
outside a finite set $\Pi$ of exceptional places.
The local-global principle for lattices (\textbf{LGPL}),
as described in \S2, shows that the maximal orders 
containing $\psi(G)$ are in correspondence with the
vertices $v$ in the product 
$\mathbb{S}(\psi)=\prod_{\nu\in\Pi}\mathfrak{s}_\nu(\psi(G))$, where
$\mathfrak{s}_\nu(\psi(G))$ denotes the local branch at $\nu$.
This is contained in the product 
$\prod_{\nu\in\Pi}\mathfrak{t}_\nu$ of Bruhat-Tits trees.
Every tree $\mathfrak{t}_\nu$ has a distinguished
vertex $v_{0,\nu}$ corresponding to the ball
$B_0^{[0]}$, or equivalently, to the maximal order
$\mathbb{M}_2(\mathcal{O}_\nu)$. 
We can define the restricted product
as the subset $\prod_{\nu\in\Pi}'\mathfrak{t}_\nu$, where
every coordinate outside a finite set of places coincide
with the distinguished vertex $v_{0,\nu}$. 
This restricted product is
a cell complex having a distinguished vertex $v_0$, 
for which every coordinate is $v_{0,\nu}$.
This vertex belong to $\mathbb{S}(\psi)$ precisely when
$\psi$ is has integral coefficients.

\begin{proof}[Proof of Thm. \ref{t7}]
    It suffices to note that for an absolutely irreducible
    representation $\psi_0$ we have $\mathfrak{L}_{\psi_0}=K$,
    and the determinant of every element $\varepsilon\in K^*$
    is $\varepsilon^2$, so the corresponding
    class in $\mathcal{G}_K(2)$ is trivial.
    This shows that $\Theta_{\mathfrak{D}}$ is trivial
    for every maximal order $\mathfrak{D}$ corresponding to a vertex in
    the branch.
    Now the result is a straightforward application of
    Prop. \ref{percc} and Lemma \ref{Lemcc}.
\end{proof}

\begin{Corth}
An absolutely irreducible representation 
$\psi_0:G\rightarrow\mathrm{GL}_2(K)$ is 
$\mathrm{GL}_2(K)$-conjugate to an integral 
representation precisely when the product 
$\mathbb{S}(\psi)$
has a vertex whose Artin distance to $v_0$ is trivial.\qed
\end{Corth}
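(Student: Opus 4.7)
The plan is to deduce this corollary directly from Theorem \ref{t7}, which already sets up a correspondence between $\Psi$, the set of conjugacy classes of integral representations, and the vertices of trivial Artin distance in $\mathbb{S}(\psi_0)$, via the intermediate set $\Omega$. The corollary therefore reduces to matching the \emph{non-emptiness} of these two sets, and the argument splits cleanly into the two directions.

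For the forward direction, I would start by assuming $\psi_0=\mathtt{a}\phi\mathtt{a}^{-1}$ for some representation $\phi$ with $\phi(G)\subseteq\mathbb{M}_2(\mathcal{O}_K)^*$. Then $\psi_0(G)\subseteq \mathtt{a}\,\mathbb{M}_2(\mathcal{O}_K)\,\mathtt{a}^{-1}$, so $\mathfrak{D}:=\mathtt{a}\,\mathbb{M}_2(\mathcal{O}_K)\,\mathtt{a}^{-1}$ is a maximal order containing $\psi_0(G)$ that is globally conjugate to $\mathbb{M}_2(\mathcal{O}_K)$. Invoking the characterization of the Artin distance recalled in \S3 --- for matrix algebras it is trivial precisely between globally conjugate orders --- the vertex of $\mathbb{S}(\psi_0)$ corresponding to $\mathfrak{D}$ has trivial Artin distance to $v_0$, as required.

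For the converse, I would suppose $\mathbb{S}(\psi_0)$ contains a vertex $v$ whose Artin distance to $v_0$ is trivial. The associated maximal order $\mathfrak{D}$ then contains $\psi_0(G)$ and is globally conjugate to $\mathbb{M}_2(\mathcal{O}_K)$, so its class lies in $\Omega$. By Theorem \ref{t7}, the fiber $f^{-1}([\mathfrak{D}])\subseteq\Psi$ has cardinality $h_K(2)\geq 1$, so there is at least one conjugacy class of integral representations of $G$, every member of which is by construction $K$-conjugate to $\psi_0$.

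I do not anticipate any real obstacle here: the corollary is essentially a packaging statement extracted from Theorem \ref{t7}. The only substantive input beyond that theorem is the equivalence ``trivial Artin distance $\Longleftrightarrow$ same global conjugacy class as $\mathbb{M}_2(\mathcal{O}_K)$'', which is the standard property of the spinor class field for matrix algebras (namely that $\Sigma$ is the largest exponent-$2$ subextension of the Hilbert class field and that the Artin distance vanishes exactly on principal-ideal classes in this setting) recalled in \S3 and used repeatedly in the paper.
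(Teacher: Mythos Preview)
Your proposal is correct and matches the paper's approach: the paper marks this corollary with \qed\ and gives no separate argument, treating it as an immediate consequence of Theorem~\ref{t7}, which is exactly what you do by reading off the non-emptiness of $\Psi$ from the non-emptiness of $\Omega$ via the $h_K(2)$-to-$1$ map and the identification of $\Omega$ with the vertices of trivial Artin distance.

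One small remark: your converse direction is slightly roundabout. Invoking the fiber cardinality $h_K(2)$ from Theorem~\ref{t7} tacitly relies on Proposition~\ref{percc}, whose setup already fixes an integral $\psi_0$. The cleaner (and logically prior) argument is the one you already gave in the forward direction, run in reverse: if $\mathfrak{D}\cong\mathbb{M}_2(\mathcal{O}_K)$ contains $\psi_0(G)$, write $\mathfrak{D}=\mathtt{a}\,\mathbb{M}_2(\mathcal{O}_K)\,\mathtt{a}^{-1}$ and observe that $\mathtt{a}^{-1}\psi_0\mathtt{a}$ is integral. This avoids any appearance of circularity, though in practice the paper clearly intends Theorem~\ref{t7} to cover the degenerate case $\Psi=\Omega=\varnothing$ as well.
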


\begin{Corth}
If an absolutely irreducible representation 
$\psi:G\rightarrow\mathrm{GL}_2(K)$ is 
$\mathrm{GL}_2(K)$-conjugate to two or more integral 
representations that are not 
$\mathrm{GL}_2(\mathcal{O}_K)$-conjugates, then
for any positive constant $M$ there exists a field extension
$E/K$ such that $\psi$ is conjugate over $E$
to more that $M$ representations with values in 
$\mathrm{GL}_2(\mathcal{O}_E)$
that are pairwise non-conjugates in this group.
\end{Corth}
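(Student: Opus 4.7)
The plan is to apply Theorem~\ref{t7} over a carefully chosen extension $E/K$ and exploit the fact that $h_E(2)$ can be made arbitrarily large as $E$ varies.

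First, I would translate the hypothesis via Theorem~\ref{t7}. The formula $|\Psi|=h_K(2)\cdot|V_0(K)|$, where $V_0(K)$ denotes the set of vertices in $\mathbb{S}(\psi_0)$ whose Artin distance to $v_0$ is trivial, together with the assumption $|\Psi|\ge 2$, forces in particular $|V_0(K)|\ge 1$. Equivalently, $\phi$ is $K$-conjugate to at least one representation $\phi_1:G\to\mathrm{GL}_2(\mathcal{O}_K)$ with integral coefficients. (In fact only this weaker consequence of the hypothesis is used.)

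Next, I would check that this integrality persists under scalar extension. For any finite extension $E/K$, the representation $\phi_1$ trivially still takes values in $\mathbb{M}_2(\mathcal{O}_E)$, so the corresponding global maximal order (conjugated back to contain $\psi_0(G)$) defines a vertex in $V_0(E)$; here one uses the elementary observation that if $[D]$ is a square in $\mathcal{G}_K$ then $[D\mathcal{O}_E]$ is a square in $\mathcal{G}_E$, so a trivial Artin distance over $K$ remains trivial over $E$. Since absolute irreducibility is preserved by scalar extension, Theorem~\ref{t7} applied to $\psi_0$ viewed over $E$ yields
\[
|\Psi(E)|\ =\ h_E(2)\cdot|V_0(E)|\ \ge\ h_E(2).
\]

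It therefore suffices to produce, for each $M$, a finite extension $E/K$ with $h_E(2)>M$. This is a classical application of genus theory: for $E=K(\sqrt{d})$ with $d\in\mathcal{O}_K$ squarefree, the $2$-rank of $\mathcal{G}_E$ is bounded below by a quantity which grows linearly with the number of primes of $K$ ramifying in $E/K$, up to a bounded correction term coming from units and archimedean places. Choosing $d$ divisible by sufficiently many primes of $\mathcal{O}_K$ then gives $h_E(2)>M$, which is all that is needed.

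The only step demanding care is this last genus-theoretic lower bound, since the textbook statement is usually given over $\mathbb{Q}$; in the relative setting over a general number field $K$ one may appeal to Chevalley's ambiguous class number formula, or, alternatively, sidestep the issue by compositing $K$ with a suitable multi-quadratic extension of $\mathbb{Q}$ whose $2$-rank is already large by the classical Gauss formula and then tracking the $2$-rank through the resulting tower. Either approach delivers the required unbounded growth of $h_E(2)$, and the corollary follows.
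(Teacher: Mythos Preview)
Your proof is correct but takes a genuinely different route from the paper. Recall that Theorem~\ref{t7} gives $|\Psi(E)|=h_E(2)\cdot|\Omega(E)|$; you fix a single vertex in $\Omega(E)$ and make the factor $h_E(2)$ large via genus theory, whereas the paper instead makes $|\Omega(E)|$ large by growing the branch. Concretely, the paper uses the hypothesis to find a place $\nu$ where the local branch $\mathfrak{s}_\nu(\phi)$ contains two distinct vertices, and then chooses $E$ so that $\nu$ becomes highly ramified: each place $P_i$ of $E$ above $\nu$ contributes a path of length $e_i$ inside the branch over $E$, producing at least $\prod_i(e_i+1)$ vertices in $\mathbb{S}(\phi)$ over $E$. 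Your approach has the virtue of using only the weaker hypothesis that $\phi$ admits \emph{one} integral model (as you yourself note), and avoids analysing how branches behave under base change, at the cost of importing Chevalley's ambiguous class number formula as a black box. The paper's approach stays entirely within the Bruhat--Tits framework developed in the article and actually exploits the ``two or more'' in the hypothesis, but is silent in the edge case where $\phi(G)$ already spans the full maximal order $\mathbb{M}_2(\mathcal{O}_K)$ (so that every local branch is a single vertex and $|\Psi|\ge 2$ comes entirely from $h_K(2)\ge 2$); your argument covers that case as well.
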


\begin{proof}
    Let $\nu$ be a place of $K$ such that 
    $\mathfrak{s}_\nu(\psi)$ has two 
    different vertices $v$ and $v'$. Let $\mathfrak{D}$ and 
    $\mathfrak{D}'$ be the corresponding local orders. 
    Write $P[\nu]\mathcal{O}_E=
    P_1^{e_1}\cdots P_n^{e_n}$ where each $P_i$ 
    is a prime ideal
    of $E$. Write $v_i$ for the vertex in the 
    Bruhat-Tits tree for
    $E_{P_i}$ corresponding to $\mathcal{O}_{P_i}\mathfrak{D}$,
    and define $v_i'$ analogously.
    Then the branch of $\psi$ at $\nu_{P_i}$, 
    as a place of $E$,
    contain every vertex in the path from $v_i$ to $v'_i$,
    and their number is $e_i+1$. The result follows by
    choosing an extension where 
    $\prod_{i=1}^n(e_i+1)$ is large enough. 
\end{proof}

Next, we present two examples that illustrate how branches,
and their corresponding representations, are computed.

\begin{ex}\label{e92}
Consider the representation 
$\psi:D_4\rightarrow\mathrm{GL}_2(\mathbb{Q}
\big(\sqrt{-5})\big)$, where
$D_4$ is the dihedral group of order $8$ with the presentation
$\langle a,b| a^4=b^2=e, bab=a^{-1}\rangle$, where
$$\psi(a)=\bmattrix 01{-1}0,\qquad\psi(b)=\bmattrix 0110.$$
Note that the maximal orders containing $\psi(a)$ appear 
in Example \ref{e89} above. 
For the sake of simplicity, we keep the
normalization in that example.
Since $\psi(b)$ has eigenvalues $1$ and $-1$, it is
contained in the maximal orders corresponding 
to the vertices at distance
$\nu_{\mathbf{2}_1}\big(1-(-1)\big)=1$ 
of the path connecting the
fixed point $1$ and $-1$ of the Moebius 
transformation $\mu(z)=\frac1z$.
Note, however, that this is twice
the valuation of a uniformizer.
\begin{figure}
\unitlength 1mm 
\linethickness{0.4pt}
\ifx\plotpoint\undefined\newsavebox{\plotpoint}\fi 
\[
\begin{picture}(38,12)(-6,40)
\put(15.2,43.2){$\bullet$}\put(15.2,51.8){$\circ$}
\put(16,44){\line(0,1){8}}
\multiput(16,44)(0.688,-.4){10}{\line(1,0){.3}}
\multiput(16,44)(-0.688,-.4){10}{\line(-1,0){.3}}
\put(7.7,38.7){$\circ$}\put(22.3,38.7){$\circ$}
\multiput(15.4,52.6)(-0.086,.05){40}{\line(-1,0){.3}}
\multiput(16.6,52.6)(0.086,.05){40}{\line(1,0){.3}}
\put(10.8,53.8){$\bullet$}\put(19.6,53.8){$\bullet$}
\multiput(20.6,54.6)(0.9,0.35){12}{\line(1,0){.2}}
\multiput(10.6,54.6)(-0.9,0){12}{\line(1,0){.3}}
\multiput(20.6,54.6)(0.9,0){12}{\line(1,0){.3}}
\put(-3.4,54){${}^\infty\star$}\put(31,54){$\star^1$}
\put(31,58){$\star^{-1}$}
\multiput(15.6,44.6)(-0.8,0.1){12}{\line(1,0){.3}}
\multiput(15.6,44.6)(0.8,0.1){12}{\line(1,0){.3}}
\put(-4.5,45.5){${}^{-\sqrt{-5}}\star$}
\put(25,45.5){$\star^{\sqrt{-5}}$}
\put(15.2,53.8){$v$}\put(10.8,55.8){$v_0$}
\put(19.6,55.8){$v_1$}\put(15.2,40.8){$v_c$}
\end{picture}
\]
\caption{The vertices defining representations in Example
\ref{e92}. }\label{oneballs}
\end{figure}
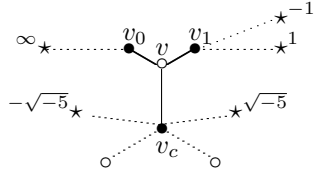
Now Figure \ref{oneballs} shows that precisely four
of the ten vertices containing $\psi(a)$ contain also
$\psi(b)$. Since the white vertex fails to correspond to an
order isomorphic to $\mathbb{M}_2(\mathbb{Z}[\sqrt{-5}])$,
this vertex provides no representations. Each of the remaining
three vertices contributes with two representations, making
a total of six. Two of them can be obtained, as before,
as a conjugate of $\psi$ by a matrix $\mathtt{t}_a=\sbmattrix a210$
for $a\in\{1,\sqrt{-5}\}$, namely:
$$\psi_1(a)=\mathtt{t}_1^{-1}\psi(a)\mathtt{t}_1=
\bmattrix {-1}{-2}11,\qquad
\psi_1(b)=\mathtt{t}_1^{-1}\psi(b)\mathtt{t}_1=
\bmattrix 120{-1},$$
$$\psi_2(a)=\mathtt{t}_{\sqrt{-5}}^{-1}\psi(a)
\mathtt{t}_{\sqrt{-5}}=
\bmattrix {-\sqrt{-5}}{-2}{-2}{\sqrt{-5}},
\quad\textnormal{ and }$$
$$\psi_2(b)=
\mathtt{t}_{\sqrt{-5}}^{-1}\psi(b)\mathtt{t}_{\sqrt{-5}}=
\bmattrix {\sqrt{-5}}23{-\sqrt{-5}},$$
while the remaining three are conjugates of
$\psi$, $\psi_1$ and $\psi_2$ by the matrix 
$\mathtt{u}$ in Example \ref{e67}.
\end{ex}

\begin{ex}
    Consider the quaternion group 
    $H=
    \langle i,j|i^4=e, i^2=j^2, ij=ji^3\rangle$,
    and  the representation 
    $\psi:H\rightarrow\mathrm{GL}_2(K)$,
    where $K=\mathbb{Q}(\sqrt{-1})$, given 
    by the matrices $$\psi(i)=
    \bmattrix 01{-1}0,\qquad \psi(j)=
    \bmattrix {\sqrt{-1}}00{-\sqrt{-1}}.$$
    Since each of these matrices have eigenvalues
    $\pm\sqrt{-1}$, the local branch at any place
    $\nu$ of either matrix is
    a tubular neighborhood of width $\nu(2)$ 
    of a maximal path. 
    The visual limits of each
    maximal path are computed as the fixed
    points of the corresponding Moebius 
    transformation $\mu_1(z)=-1/z$ or
    $\mu_2(z)={-z}$. The first path goes from
    $\sqrt{-1}$ to $-\sqrt{-1}$ and the second 
    from $0$ to $\infty$.
    They intersect precisely at the distinguished vertex
    $v_0$ for every non-dyadic place $\nu$.
    When $\nu$ is the dyadic place, i.e., the
    principal ideal generated by $\pi_\nu=1+\sqrt{-1}$,
    the intersection of the branches is a
    ball of radius $1$ around the vertex $v_1$
    corresponding to the ball 
    $B_{\sqrt{-1}}^{[1]}$, 
    if we normalize the valuation by $\nu(\pi_\nu)=1$,
    as ilustrated in Figure \ref{f11}.
    \begin{figure}
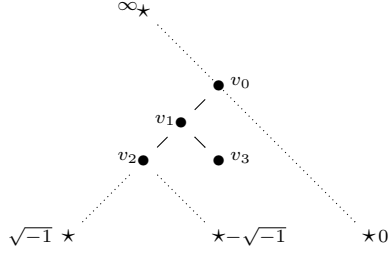

    \[\xygraph{!{<0cm,0cm>;<1cm,0cm>:<0cm,1cm>::}
!{(1,3) }*+{\star}="ei"
!{(0,0) }*+{\star}="e1"
!{(2,0) }*+{\star}="em"
!{(4,0) }*+{\star}="e0"
!{(0.8,3) }*+{{}^{\infty}}
!{(-.5,0) }*+{{}_{\sqrt{-1}}}
!{(2.5,0) }*+{{}_{-\sqrt{-1}}}
!{(4.2,0) }*+{{}_{0}}
!{(1,1) }*+{\bullet}="v2"
!{(1.5,1.5) }*+{\bullet}="v1"
!{(2,2) }*+{\bullet}="v0"
!{(2,1) }*+{\bullet}="v3"
!{(0.8,1) }*+{{}^{v_2}}
!{(1.3,1.5) }*+{{}^{v_1}}
!{(2.3,2) }*+{{}^{v_0}}
!{(2.3,1) }*+{{}^{v_3}}
"ei"-@{.}"e0" "e1"-@{.}"v2" 
"em"-@{.}"v2" "v2"-"v1" "v1"-"v0"
"v1"-"v3"
    }\]
        \caption{The branch of the canonical representation for
        the quaternion group.}\label{f11}
    \end{figure}
        Note that the 
    distinguished vertex $v_0$ is a leaf of this
    branch. We conclude that there are precisely
    $4$ conjugacy classes of integral 
    representations. Other than the class of
    $\psi_0=\psi$, which corresponds to the 
    distinguished vertex $v_0$, the remaining
    three conjugacy classes can be computed
    by finding Moebius transformation sending
    $v_0$ to each of the remaining vertices in 
    Figure \ref{f11}. For example, to send
    $v_0$ to $v_t$, $t=1,2,3$, we can use the 
    Moebius transformation 
    $\alpha_t(z)=a_t(z-b_t)+b_t$, where $b_t$ is 
    a center of the corresponding ball 
    and $|a_t|$ is the radius. In particular we can use $(a_1,b_1)=(\pi_\nu,\sqrt{-1})$,
    $(a_2,b_2)=(2,\sqrt{-1})$ and 
    $(a_3,b_3)=(2,1)$. Note that $1$ is a center
    of the ball corresponding to $v_3$ since
    $\pi_\nu=1+\sqrt{-1}$ is a uniformizer.
    This gives us the representatives $\psi_t$
    defined by 
    $\psi_t(i)=\mathtt{a}_t^{-1}\psi(i)\mathtt{a}_t$
    and $\psi_t(j)=\mathtt{a}_t^{-1}\psi(j)\mathtt{a}_t$,
    where $\mathtt{a}_t=\sbmattrix {a_t}{b_t(1-a_t)}01$.
    The explicit representatives computed in this way are as follows:
    $$\psi_1(i)=\bmattrix 1{\bar\pi_\nu}{-\pi_\nu}{-1}, \qquad
    \psi_1(j)=
    \bmattrix {\sqrt{-1}}{\bar\pi_\nu\sqrt{-1}}
    0{-\sqrt{-1}},$$
      $$\psi_2(i)=
      \bmattrix {-\sqrt{-1}}0{-2}{\sqrt{-1}}, 
      \qquad
    \psi_2(j)=
    \bmattrix {\sqrt{-1}}10{-\sqrt{-1}},$$
      $$\psi_3(i)=\bmattrix {-1}1{-2}1, \qquad
    \psi_3(j)=
    \bmattrix {\sqrt{-1}}{-\sqrt{-1}}0
    {-\sqrt{-1}}.$$
    Alternatively, conjugation by the matrix
    $\Omega=\frac12\psi(-1+i+j+ij)$, whose 
    cube is the identity, permutes cyclically 
    the three images $\psi(i)$, $\psi(j)$ 
    and $\psi(ij)$. 
    This implies that the action of the 
    corresponding Moebius transformation permutes
    the stems of their branches, while leaving
    the branch of the whole group invariant.
    We conclude that $\Omega$, which is a unit 
    over $\mathcal{O}_K$, permutes cyclically
    the three vertices $v_0$, $v_2$ and $v_3$,
    while leaving the central vertex $v_1$
    invariant. In particular $\Omega$ is contained
    in the maximal order corresponding to $v_1$,
    so the representation $\psi_1$ can be 
    extended to a representation of a group of 
    order $24$ that is, in fact, isomorphic to
    $\mathrm{PSL}(2,3)$. Since $\Omega$ is 
    contained in a unique maximal order, locally at the
    dyadic place, only the representations that are conjugate to
    $\psi_1$ have such extension.
\end{ex}

\begin{proof}[Proof of Theorem \ref{t4}]
We consider the group 
$D_n=\langle a,b|a^n=b^2=e, bab=a^{-1}\rangle$,
and we let $K_n$ be the field of definition for all 
faithful two dimensional absolutely irreducible 
representations of $D_n$, which is the real field 
$K=\mathbb{Q}(\xi+\xi^{-1})$, where
$\xi$ is a primitive $n$-th root of 
unity. These are the representation
$\psi_{n,k}$ that can be defined over 
$L=\mathbb{Q}(\xi)$ by the formulas
$$\psi_{n,k}(a)=R'=
\bmattrix {\xi^k}00{\xi^{-k}},
\qquad\psi_{n,k}(b)=S=\bmattrix0110,$$
the former being a conjugate of the representation
$$\tilde{\psi}_{n,k}(a)=R=
\bmattrix 0{-1}1{\xi^k+\xi^{-k}},
\qquad\tilde{\psi}_{n,k}(b)=S=\bmattrix0110,$$
which does have coefficients in $K$.
In fact, conjugation by
$S$ takes each, $R$ and $R'$, to its inverse, 
so both $\tilde{\psi}_{n,k}$ and $\psi_{n,k}$ are
indeed representations of $D_n$. As they have the same
character, they are $\mathrm{GL}_2(L)$-conjugates.
We can assume that $k$ is relatively prime
to $n$, since otherwise we can replace
$D_n$ by a quotient.

The matrix expressing the basis $\{I,R,S,RS\}$
in terms of the canonical basis of the matrix 
algebra is 
$$A=\left(\begin{array}{cccc}1&0&0&-1\\
0&-1&1&0\\0&1&1&\rho\\
1&\rho&0&1\end{array}\right),$$
whose determinant is $\rho^2-4=(\rho-2)(\rho+2)$.
This is a unit unless $n$ is either a prime power
or twice a prime power according to
Lemma \ref{lc4}. In any other case the 
representation generates a maximal order
and therefore it is contained in a unique 
such order.

In all that follows we assume $n$ is either a 
prime power or twice a prime power. The order
generated by the matrices $R$ and $S$ fail to be 
a maximal order at the unique place $\nu$ over
a prime $p$, which is generated by either
$\rho-2$ or $\rho+2$ (or both if $p=2$).
We write $\pi=\rho-2$ if $n$ is a prime power
and $\pi=\rho+2$ if $n$ is twice an odd prime 
power. Then $\nu=\nu_\pi$ is the place corresponding to the 
maximal ideal $P[\nu]=(\pi)$.
Note that the case $n=2$ need not be
included
since the Klein group is abelian and therefore
it has no completely irreducible two dimensional
representation.

To compute the branch at $\nu$ of this representation
we consider the Moebius transformations 
corresponding to the matrices $R$ and $S$.
They are $\iota_S(z)=\frac1z$ 
and $\iota_R(z)=\frac{-1}{z+\rho}$, where 
$\rho=\xi^k+\xi^{-k}$. The first one has
$1$ and $-1$ has invariant points on the 
projective line. The second one has $\xi^k$
and $\xi^{-k}$, which are not defined over 
$K_\nu$, but over a ramified quadratic extension
$L_{\nu'}/K_\nu$. Note that $\pi_{\nu'}=1\pm\xi$ is
a uniformizing parameter for $L_{\nu'}$. 

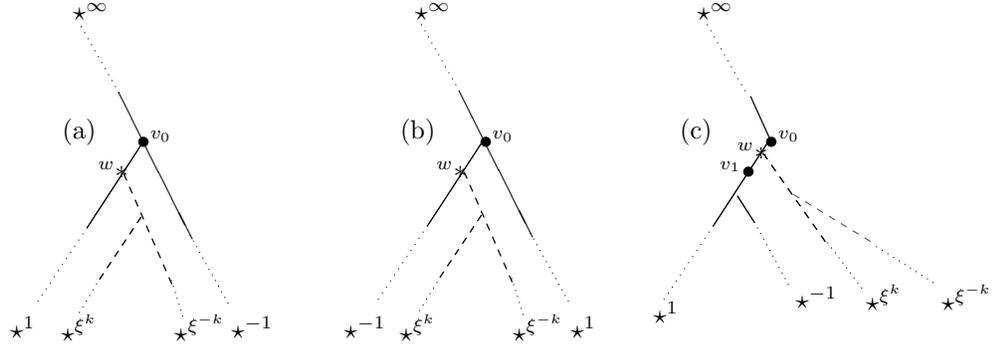
\begin{figure}
\[
\unitlength 1mm 
\linethickness{0.4pt}
\ifx\plotpoint\undefined\newsavebox{\plotpoint}\fi 
\begin{picture}(30,30)(0,12)
\put(5,27){(a)}
\put(17,38){$\star^{\infty}$}
\put(5,14){$\star_1$}
\put(12,12){${}_{\xi^k}$}\put(14,14){$\star$}
\put(21,14){$\star$}\put(20,12){${}_{\xi^{-k}}$}
\put(27,14){$\star_{-1}$}
\put(17,25.5){$\bullet^{v_0}$}
\put(13,24){${}^w*$}
\put(10,22){${}^{v_1}\bullet$}
\multiput(16.43,24.93)(.0334821,-.0513393){16}{\line(0,-1){.0513393}}
\multiput(17.501,23.287)(.0334821,-.0513393){16}{\line(0,-1){.0513393}}
\multiput(18.573,21.644)(.0334821,-.0513393){16}{\line(0,-1){.0513393}}
\multiput(19.644,20.001)(.0334821,-.0513393){16}{\line(0,-1){.0513393}}
\multiput(16.43,24.68)(-.028571,-.164286){5}{\line(0,-1){.164286}}
\multiput(16.144,23.037)(-.028571,-.164286){5}{\line(0,-1){.164286}}
\multiput(15.858,21.394)(-.028571,-.164286){5}{\line(0,-1){.164286}}
\multiput(15.573,19.751)(-.028571,-.164286){5}{\line(0,-1){.164286}}
\multiput(13,22.25)(.03968254,.033730159){126}{\line(1,0){.03968254}}
\multiput(18,26.5)(.035460993,-.033687943){141}{\line(1,0){.035460993}}
\put(17.75,26.25){\line(0,1){7}}
\multiput(17.68,32.68)(.04167,.83333){7}{{\rule{.4pt}{.4pt}}}
\multiput(12.43,22.18)(-.63889,-.61111){10}{{\rule{.4pt}{.4pt}}}
\multiput(15.68,18.93)(-.125,-.875){5}{{\rule{.4pt}{.4pt}}}
\multiput(20.18,18.68)(.35,-.65){6}{{\rule{.4pt}{.4pt}}}
\multiput(22.93,21.43)(.5,-.625){9}{{\rule{.4pt}{.4pt}}}
\end{picture}
\qquad\qquad
\unitlength 1mm 
\linethickness{0.4pt}
\ifx\plotpoint\undefined\newsavebox{\plotpoint}\fi 
\begin{picture}(30,30)(0,12)
\put(5,27){(b)}
\put(17,38){$\star^{\infty}$}
\put(5,14){$\star_{-1}$}
\put(12,12){${}_{\xi^k}$}\put(14,14){$\star$}
\put(21,14){$\star$}\put(20,12){${}_{\xi^{-k}}$}
\put(27,14){$\star_1$}
\put(17,25.5){$\bullet^{v_0}$}
\put(13,24){${}^w*$}
\put(10,22){${}^{v_1}\bullet$}
\multiput(16.43,24.93)(.0334821,-.0513393){16}{\line(0,-1){.0513393}}
\multiput(17.501,23.287)(.0334821,-.0513393){16}{\line(0,-1){.0513393}}
\multiput(18.573,21.644)(.0334821,-.0513393){16}{\line(0,-1){.0513393}}
\multiput(19.644,20.001)(.0334821,-.0513393){16}{\line(0,-1){.0513393}}
\multiput(16.43,24.68)(-.028571,-.164286){5}{\line(0,-1){.164286}}
\multiput(16.144,23.037)(-.028571,-.164286){5}{\line(0,-1){.164286}}
\multiput(15.858,21.394)(-.028571,-.164286){5}{\line(0,-1){.164286}}
\multiput(15.573,19.751)(-.028571,-.164286){5}{\line(0,-1){.164286}}
\multiput(13,22.25)(.03968254,.033730159){126}{\line(1,0){.03968254}}
\multiput(18,26.5)(.035460993,-.033687943){141}{\line(1,0){.035460993}}
\put(17.75,26.25){\line(0,1){7}}
\multiput(17.68,32.68)(.04167,.83333){7}{{\rule{.4pt}{.4pt}}}
\multiput(12.43,22.18)(-.63889,-.61111){10}{{\rule{.4pt}{.4pt}}}
\multiput(15.68,18.93)(-.125,-.875){5}{{\rule{.4pt}{.4pt}}}
\multiput(20.18,18.68)(.35,-.65){6}{{\rule{.4pt}{.4pt}}}
\multiput(22.93,21.43)(.5,-.625){9}{{\rule{.4pt}{.4pt}}}
\end{picture}
\qquad\qquad
\unitlength 1mm 
\linethickness{0.4pt}
\ifx\plotpoint\undefined\newsavebox{\plotpoint}\fi 
\begin{picture}(30,30)(0,12)
\put(5,27){(c)}
\put(17,38){$\star^{\infty}$}
\put(4,14){$\star_1$}
\put(6.5,12){${}_{-1}\star$}
\put(17,12){${}_{\xi^k}$}\put(16.5,14){$\star$}
\put(26,14){$\star$}\put(25,12){${}_{\xi^{-k}}$}
\put(17,28.5){$\bullet^{v_0}$}
\put(17,25.5){$*^w$}
\put(12.5,24){${}^{v_1}\bullet$}
\multiput(12,21.25)(.03968254,.033730159){150}{\line(1,0){.03968254}}
\put(17.75,26.25){\line(0,1){7}}
\multiput(17.68,32.68)(.04167,.83333){7}{{\rule{.4pt}{.4pt}}}
\multiput(11.43,21.18)(-.63889,-.61111){10}{{\rule{.4pt}{.4pt}}}
\multiput(17.93,26.18)(.0343137,-.0326797){17}{\line(1,0){.0343137}}
\multiput(19.096,25.069)(.0343137,-.0326797){17}{\line(1,0){.0343137}}
\multiput(20.263,23.957)(.0343137,-.0326797){17}{\line(1,0){.0343137}}
\multiput(21.43,22.846)(.0343137,-.0326797){17}{\line(1,0){.0343137}}
\multiput(22.596,21.735)(.0343137,-.0326797){17}{\line(1,0){.0343137}}
\multiput(19.68,24.18)(-.033333,-.116667){6}{\line(0,-1){.116667}}
\multiput(19.28,22.78)(-.033333,-.116667){6}{\line(0,-1){.116667}}
\multiput(18.88,21.38)(-.033333,-.116667){6}{\line(0,-1){.116667}}
\multiput(14,23)(-.0326087,-.1521739){23}{\line(0,-1){.1521739}}
\multiput(13,19.18)(-.25,-.82143){7}{{\rule{.4pt}{.4pt}}}
\multiput(18.68,20.18)(-.16667,-.75){7}{{\rule{.4pt}{.4pt}}}
\multiput(23.43,20.43)(.41667,-.66667){7}{{\rule{.4pt}{.4pt}}}
\end{picture}
\]
    \caption{The possible  dispositions of the two paths
    used in the proof of Theorem \ref{t4}.} \label{f33}
\end{figure}

Assume first that $p$ is an odd prime and $n=p^r$.
Note that $\xi^k$ is a primitive root, whence
$\xi^k-1$ is a uniformizer of $L_{\nu'}$ by Lemma \ref{lc3}.
Since $1-\xi^{-k}=\xi^{-k}(\xi^k-1)$ and
$\xi^k-\xi^{-k}=\xi^{-k}(\xi^k-1)(\xi^k+1)$,
where the last factor is a unit, the elements
$1$, $\xi^k$ and $\xi^{-k}$ are actually equidistant. 
Furthermore, $1$ and $-1$ are different in the residue 
field, since $\nu$ is non dyadic. If follows that
$1$, $-1$, $\xi^k$ and $\xi^{-k}$ are located
as in Figure \ref{f33}(a). Note that, since the extension
$L_{\nu'}/K_\nu$ is ramified, the vertex $w$ does not 
correspond to a maximal order defined over $K_\nu$.

Since the representation
$\tilde\psi_{n,k}$ has integral coefficients,
the width of the branch of $R$ is large enough
to include the vertex corresponding to the ball
$B_0^{[0]}=B_1^{[0]}$. It is immediate now that it also
include the vertex corresponding to the ball
$B_1^{[1]}$ which lies at the same distance from
the path from $\xi^k$ to $\xi^{-k}$.
It follows that $R$ and $S$ are contained
in the corresponding maximal orders
$\mathfrak{D}_0$ and $\mathfrak{D}_1$, and 
therefore in $\mathfrak{E}=\mathfrak{D}_0
\cap \mathfrak{D}_1$.
Since the determinant of the matrix $A$, 
in this case,
is a uniformizing parameter, we conclude that $R$
and $S$ generate the
order $\mathfrak{E}$,
and therefore the branch, in this case,
contains precisely two maximal orders.

Essentially the same proof holds when
$n=2p^r$ for an odd prime $p$, since in this
case $\xi^k\equiv-1\ (\mathrm{mod}\ \pi_{\nu'})$
for odd $k$, see Figure \ref{f33}(b).
We assume, therefore, that $n>2$
is a power of $2$. In this case we use the fact
that both $\xi^k\pm1$ and $\xi^{-k}\pm1$ are uniformizers,
to see that the paths are disposed as shown in Figure 
\ref{f33}(c). Note that the residue field has two elements,
since $L_{\nu'}/\mathbb{Q}_2$ is totally ramified,
whence $\xi^k$ and $\xi^{-k}$ are closer from each other than
either is from one. The argument to show that $R$ and $S$ are 
contained in the order $\mathfrak{E}$
is the same as before, but now $R$ and $S$ do not generate 
this order, as the determinant
of $A$ is no longer a uniformizer in this case.
To prove that this representation is not 
contained in additional maximal orders, we
prove that the corresponding residual
representations have a unique eigenspace in 
each case, so that Lemma \ref{l31} applies. 
This is immediate for the
representation $\tilde\psi_{n,k}$ since the
matrix $S$, over the (characteristic $2$)
residue field is conjugate to the matrix
$\sbmattrix 1101$. This show that the vertex
corresponding to $\mathfrak{D}_0$ is a
leaf of the branch. To achieve the same
for $\mathfrak{D}_1$, we need to use 
a representation corresponding to that order.
Write $\pi=\rho-2$, which is a uniformizer in
$K_\nu$.
Note that the moebius transformation
$\mu(z)=\frac{\pi-z}z$ takes $\infty$, $0$
and $1$, respectively, onto $-1$, $\infty$
and $\pi-1$. Therefore, it takes the vertex $v_0$
corresponding to $\mathfrak{D}_0$ onto
the vertex $v_1$ corresponding to 
$\mathfrak{D}_1$. A representation corresponding
to $\mathfrak{D}_1$ is given, therefore,
by the matrices $${\sbmattrix {-1}\pi10}^{-1}
R\sbmattrix {-1}\pi10=\sbmattrix {\rho-1}\pi11
\quad\textnormal{and }\quad
{\sbmattrix {-1}\pi10}^{-1}S
\sbmattrix {-1}\pi10=\sbmattrix {-1}\pi01.$$
The image of the first matrix in the residual algebra is $\sbmattrix 1011$, so the result follows as before. This concludes the proof.
\end{proof}

\end{document}